\newtheorem{thm}{Theorem}[section]
\newtheorem{cor}{Corollary}
\newtheorem{alg}{Algorithm}
\newcommand*\diff{\mathop{}\!\mathrm{d}}
\DeclareMathOperator{\dt}{\mathrm{d}t}
\DeclareMathOperator{\R}{\mathbb{R}}
\DeclareMathOperator{\N}{\mathbb{N}}
\DeclareMathOperator{\dx}{\mathrm{dx}}
\DeclareMathOperator{\dy}{\mathrm{dy}} 
\definecolor{DarkGreen}{rgb}{0,0.4,0.1}
\newcommand{\js}[1]{{\color{black}#1}}
\newcommand{\cu}[1]{{\color{black}#1}}
\definecolor{Orange}{rgb}{1.0, 0.3, 0.0}
\newcommand{\cc}[1]{{\color{black}#1}}
\newcommand{\sj}[1]{{\color{black}#1}}
\newcommand{\rcu}[1]{{\color{black}#1}}
\newcommand{\jak}[1]{{\color{black}#1}}
\newcommand{\rcc}[1]{{\color{black}#1}}
\newcommand{\rc}[1]{{\color{black}#1}}
\newcommand{\fj}[1]{{\color{black}#1}}
\newcommand{\ba}{\mathsf{a}}
\newcommand{\bb}{\mathsf{b}}
\newcommand{\bc}{\mathsf{c}}
\newcommand{\bd}{\mathsf{d}}
\newcommand{\calB}{\mathcal{B}}
\DeclareMathOperator{\OB}{\overline{\calB}}
\newcommand{\OV}{\operatorname{\mathsf{V}}}
\newcommand{\OK}{\operatorname{\mathsf{K}}}
\newcommand{\mOV}{\overline{\OV}}
\newcommand{\OW}{\operatorname{\mathsf{W}}}
\newcommand{\mOW}{\overline{\OW}}
\newcommand{\dual}[2]{\left\langle #1\,,\,#2\right\rangle}
\begin{document}

\title{Optimal operator preconditioning for pseudodifferential boundary problems\thanks{The authors thank Gerd Grubb for helpful discussions and for pointing them to \cite{ajs}.\newline
J.~S.~was supported by The Maxwell Institute Graduate School in Analysis and its
Applications, a Centre for Doctoral Training funded by the UK Engineering and Physical
Sciences Research Council (grant EP/L016508/01), the Scottish Funding Council, Heriot-Watt
University and the University of Edinburgh.
}}
\subtitle{}


\author{Heiko Gimperlein         \and
        Jakub Stocek			\and
        Carolina Urz\'{u}a-Torres
}

\authorrunning{{Gimperlein, Stocek, and Urz\'{u}a-Torres}} 

\institute{H. Gimperlein \at
              Maxwell Institute for Mathematical Sciences \& \\
Department of Mathematics, {Heriot--Watt University}\\ Edinburgh, EH14 4AS\\ United 
Kingdom\\
              \email{h.gimperlein@hw.ac.uk}           
           \and
J. Stocek \at
              \rc{British Antarctic Survey \\
High Cross, Madingley Road\\ Cambridge, CB3 0ET\\ United 
Kingdom\\
              \email{jaksto@bas.ac.uk}}
              \and
              C. Urz\'{u}a-Torres \at
\rcu{
Delft Institute for Applied Mathematics, \\
Delft University of Technology\\
The Netherlands\\
\email{c.a.urzuatorres@tudelft.nl}}
}

\date{Received: date / Accepted: date}

\maketitle
\begin{abstract}
We propose an operator preconditioner for general elliptic
pseudodifferential equations in a domain $\Omega$, where $\Omega$ is 
either in $\mathbb{R}^n$ or in a Riemannian
manifold. For linear systems of equations arising from low-order Galerkin
discretizations, we obtain condition numbers that are independent of the 
mesh size and of the choice of bases for test and trial functions. The basic
ingredient is a classical formula by Boggio for the fractional Laplacian, 
which is extended analytically. In the
special case of the weakly and hypersingular operators on a line segment or a
screen, our approach gives a unified, independent proof for a series of recent
results by  Hiptmair,  Jerez-Hanckes, N{\'e}d{\'e}lec and Urz\'{u}a-Torres. 
We also study the increasing relevance of the regularity assumptions on the 
mesh with the order of the operator. Numerical examples validate our theoretical 
findings and illustrate the performance of the proposed preconditioner on 
quasi-uniform, graded and adaptively generated meshes. 
\keywords{Operator preconditioning \and exact inverses \and fractional Laplacian \and
integral operators \and Galerkin methods} 
\subclass{65F08\and 65N30 \and 45P05 \and  31B10}
\end{abstract}



\section{Introduction}
This article considers the Dirichlet problem for an elliptic pseudodifferential
operator $A$ \js{of order $2s$} in a bounded Lipschitz domain $\Omega$, where 
$\Omega$ is either a subset of $\mathbb{R}^n$, or, more generally, in a 
Riemannian manifold $\Gamma$:
\begin{align}\label{mainproblem}
\begin{cases}
 Au = f \qquad & \text{in } \Omega,\\
u = 0 \qquad & \text{in } \Gamma\setminus \overline{\Omega}.
\end{cases}
\end{align}
Such pseudodifferential boundary problems are of interest in several applications.
For instance, the integral fractional Laplacian $A=(-\Delta)^s$ and its variants 
$A=\ $ $\mathrm{div}(c(x)\nabla^{2s-1} u)$ in a domain $\Omega \subset \mathbb{
R}^n$ arise in the pricing of stock options \rc{\cite[Chapter 12]{tankov}}, image processing \cite{go}, 
continuum mechanics \cite{d}, and in the movement of biological organisms 
\cite{egp,egps} or swarm robotic systems \cite{eg2}. \cu{By considering $\Omega\subset
\Gamma$ (with $\Gamma$ a Riemannian manifold), one can also study the equations 
for the weakly singular ($A=\OV$) or hypersingular ($A=\OW$) operators arising 
from boundary integral formulations of the first kind for an elliptic boundary 
problem on curve segments or on open surfaces \rcu{\cite[Section~3.5.2]{ss}}}. Another 
interesting example 
would be, in potential theory, where boundary problems of negative order arise 
for the Riesz potential \rc{\cite[Chapter 1, Section~3]{l}}.

On the one hand, the bilinear form associated to $A$ 
is nonlocal, and its Galerkin discretization results in dense matrices. On the 
other hand, the condition number of the Galerkin matrices \rcu{when using low-order 
\fj{piecewise polynomial} basis function} is of order 
$\mathcal{O}(h^{-2|s|})$,  where $h$ is the size of the smallest cell of the 
mesh. Therefore, the solution of the resulting linear system via iterative 
solvers becomes prohibitively slow on fine meshes.

The preconditioning of pseudodifferential equations has been considered in
different contexts. Classically, boundary element methods have been of
interest, where multigrid and additive Schwarz methods \rc{\cite{am1,dirk,ts}, \cite[Chapter 6]{ss}}, 
as well as operator preconditioners \cite{sw} have been studied. A popular 
choice is operator preconditioning based on an 
elliptic pseudodifferential operator \js{of the opposite order $-2s$}, yet it 
leads to growing condition numbers when boundary conditions are not respected. 
\textcolor{black}{Indeed, in the case $s=\frac{1}{2}$, the achieved condition number grows 
like $\vert\log(h)\vert^{n+1}$ for $n=1$ \cite[Theorem 4.1]{ms} and $n=2$ \cite[Proposition 1.3.5]{CHR02}.}  
We \cu{prove} that \rcc{the situation worsens} for $\vert s \vert>\frac{1}{2}$, 
\rcc{and the condition number may increase like $h^{1-2|s|}$}, as we \cu{discuss in Section~\ref{sec:precond}}.
Therefore, the ``opposite order'' strategy for $A$ in \eqref{mainproblem} could 
be far from optimal. \js{This motivates the approach we pursue here, which 
incorporates the boundary conditions.}

The aforementioned suboptimality was recently overcome for the weakly singular 
and hypersingular operators $\OV$ and $\OW$ on open $2d$ surfaces \cite{hju} 
and curve segments \cite{hju0}, respectively. The proposed preconditioners were 
based on new exact formulas for the inverses of these operators on the flat 
disk \cite{hju1} and interval $[-1,1]$ \cite{jn}. It is important to mention 
that, in this context, this article provides a \cu{novel } and independent
approach to the preconditioners used in \cite{hju1,hju}: \sj{As discussed in 
Remark \ref{reltoBEM}, by identifying $\Omega \subset \mathbb{R}^n$ with the 
flat screen $\Omega\times \{0\} \subset \mathbb{R}^{n+1}$, $\OW$ coincides with the fractional Laplacian
$\frac{1}{2}(-\Delta)^s$ for $s=\frac{1}{2}$, while $\OV$ coincides with $\frac{
1}{2}(-\Delta)^s$ for $s=-\frac{1}{2}$. Boggio's classical formula (Equation 
\eqref{e:green} below) for the fractional Laplacian in the unit ball of 
$\mathbb{R}^n$, respectively its analytic extension to $s \in \mathbb{C}$, 
therefore recovers the exact formulas for $\OV^{-1}$ and $\OW^{-1}$ from 
\cite{hju1,jn} as \rcu{special cases}.} \cu{This connection between the fractional 
Laplacian and boundary integral equations was only known in $1D$ \cite{jn}, and 
we extend it to arbitrary dimension. As a consequence, we obtain a unified and general preconditioning 
strategy for pseudodifferential problems, which includes $\OV$, $\OW$ and $(-\Delta)^s$.}

Recently, the fractional Laplacian has attracted interest in itself. Multigrid
preconditioners have been briefly mentioned in \cite{ag}, while additive Schwarz
preconditioners of BPX-type are \jak{currently being} investigated \cite{bpxnochetto,fmpp}.
Applied to this particular operator $A$, our results lead to the first
\textit{operator} preconditioner. This offers the advantage of benefiting from 
all the rigorous results of the operator preconditioning theory, including 
its applicability to non-uniformly refined meshes, while being easily
implementable. \cu{Indeed, 
solutions to \eqref{mainproblem} feature edge singularities, analogous to those 
for the fractional Laplacian \rc{\cite[Theorem 4]{g3}}. Therefore, when discretizing 
with low-order finite elements, one requires local refinement to recover optimal 
convergence rates. Hence, it becomes mandatory that preconditioners can deal with 
these non-uniform meshes.}\\

Our main result for preconditioning can be found in Theorem~\ref{thm:main}; the 
proposed preconditioner $\mathbf{P}$ is optimal in the sense that the 
bound for the condition number neither depends on the mesh refinement, nor 
on the choice of bases for trial and test spaces.\\

We verify that the preconditioner may be used on shape regular 
algebraically graded meshes, which lead to quasi-optimal convergence rates for 
piecewise linear elements. 
We \cu{prove} that the required mesh assumptions also 
hold for a natural class of adaptively refined meshes. \cu{By doing this, we 
show for the first time that operator preconditioning with standard \rcc{low-order} 
primal-dual finite element discretization does apply to these 
adaptive meshes. Our proof in fact shows the $H^1(\Omega)$-stability of the generalized 
(Petrov-Galerkin) $L^2(\Omega)$ projection on low-order finite element spaces, 
which may also have applications beyond preconditioning.} \\

\noindent \textit{Outline of this article:} Section \ref{sec:notation} recalls
basic notions of fractional Sobolev spaces. The fractional Laplacian and
Boggio's formula are discussed in Section \ref{sec:green}. There we also
explain how to use the latter to define a bilinear form associated to the 
solution operator in the ball. As special cases, we
recover the recent solution formulas for the weakly and hypersingular operators
$\OV$ and $\OW$. Section \ref{sec:bvp} introduces the pseudo\-differential
Dirichlet problem \eqref{mainproblem}. Next, in Section \ref{sec:precond}, 
we recall the operator preconditioning theory and summarize discretization strategies 
under which Theorem~\rcu{\ref{thm:main}} holds. \rcc{In particular,} Section 
\ref{sec:adaptive} discusses the case 
of adaptively refined meshes. The article concludes
with numerical experiments and their discussion in Section \ref{sec:exp}.

\section{Sobolev Spaces}\label{sec:notation}

We recall some basic definitions and properties related to Sobolev spaces of 
non-integer order and to the fractional Laplacian. For further details we refer 
to 
\cite{ajs,t,fkv}. 

Let $\Omega \subset \mathbb{R}^n$ be a bounded \sj{Lipschitz} domain, and for $s \in \mathbb{N
}_0$, $H^s(\Omega)$ the Sobolev space of functions in $L^2(\Omega)$ whose 
distributional derivatives of order $s$ belong to $L^2(\Omega)$. For $s \in (0
,\infty)$, we write $m = \lfloor s \rfloor$ and $\sigma = s-m$ and define the 
Sobolev space $H^s(\Omega)$  as 
\fj{\begin{equation*}
H^s(\Omega) := \lbrace v \in H^m(\Omega): |\partial^\alpha v|_{H^\sigma(\Omega)} 
< \infty \ \ \forall |\alpha|= m\rbrace\ .
\end{equation*}}
Here $|\cdot|_{H^\sigma(\Omega)}$ is the Aronszajn-Slobodeckij seminorm
\fj{\begin{equation*}
|v|_{H^\sigma(\Omega)}^2 := \iint_{\Omega \times \Omega} \frac{(v(x)-v(y))^2}{
\vert x - y \vert^{n+2\sigma}} \diff y \diff x.
\end{equation*}}
$H^s(\Omega)$ is a Hilbert space endowed with the norm
\fj{\begin{equation*}
\|v\|_{H^s(\Omega)}^2 := \|v\|_{H^m(\Omega)}^2 + \sum_{|\alpha|=m}|\partial^\alpha 
v|_{H^\sigma(\Omega)}^2.
\end{equation*}}
Particularly relevant for this article are  the Sobolev spaces \rc{\cite[Chapter 4.1]{gs}, \cite[Chapter 3]{ml}}
\fj{\begin{equation*}
{
\widetilde{H}^s(\Omega) := \lbrace v \in H^s(\mathbb{R}^n): \mathrm{supp}\;
v \subset \overline{\Omega} \rbrace}
\end{equation*}}
of distributions whose extension by $0$ belongs to $H^s(\mathbb{R}^n)$. In the 
literature, the spaces $\widetilde{H}^s(\Omega)$ are sometimes denoted by 
${H}^s_{00}(\Omega)$.\\
We recall that when $\Omega$ is Lipschitz and $ \frac{1}{2} \neq s \in (0,1)$, 
$\widetilde{H}^s(\Omega)$ coincides with the space $H^s_0(\Omega)$, which is 
the closure of $C^{\infty}_0(\Omega)$ with respect to the $H^s$\rcc{-}norm. Moreover, 
for $s\in(0,\frac{1}{2})$, $\widetilde{H}^s(\Omega)=H^s(\Omega)=H^s_0(\Omega)$. 
All three spaces differ when $s=\frac{1}{2}$.\\

For negative $s$ the Sobolev spaces are defined by duality, \textcolor{black}{and in this article we denote the duality pairing between $\widetilde{H}^s(\Omega)$ and $H^{-s}(\Omega)$ by $\dual{\cdot}{\cdot}_{\Omega}$.} Using local coordinates, the definition of the Sobolev spaces extends to a bounded domain $\Omega$ of a 
Riemannian manifold $\Gamma$. For $|s|\leq 1$ the definition is independent of 
the choice of local coordinates, if $\Omega$ is Lipschitz \rc{\cite[Section 9]{t}}.

\section{The Fractional Laplacian}\label{sec:green}

For $s \in (0,1)$, we define the fractional Laplacian of a function 
$u$ \textcolor{black}{in the Schwartz space $\mathcal{S}(\mathbb{R}^n)$} by
\fj{\begin{equation}\label{e:fl}
(-\Delta)^{s} u (x) 
:= c_{n,s} \lim_{\varepsilon \to 0^+} 
\int_{\mathbb{R}^n \setminus \sj{\OB_{\varepsilon}(x)}} \frac{u(x)-u(y)}{
\vert x - y \vert^{n+2s}} \diff y\ ,
\end{equation}}
where 
\sj{$\calB_{\varepsilon}(x)$ 
the $n$-dimensional ball of radius $\varepsilon>0$ centered at 
$x$}. The normalization 
constant $c_{n,s}$ is defined in terms of $\mathsf{\Gamma}$ functions:
\fj{\begin{equation*}
c_{n,s} := \dfrac{2^{2s} s \mathsf{\Gamma}\left(\frac{n+2s}{2}\right)}{\pi^{\frac{
n}{2}}\mathsf{\Gamma}\left(1-s\right)}.
\end{equation*}}
For general $s>0$, we set $m := \lfloor s \rfloor$, $\sigma := s-m$, and define 
$(-\Delta)^s u =  (-\Delta)^m (-\Delta)^{\sigma} u$ for  $u$ in the Schwartz space. 

Equivalently, the fractional Laplacian may be defined in terms of the Fourier 
transform on $\mathbb{R}^n$ as \begin{equation} \label{fourierdef}\mathcal{F}((-\Delta)^s u) = \vert\xi \vert^{2s} 
\mathcal{F}u, \end{equation}\jak{see for example \cite[Equation 25.2]{classicalbook}.} 
\textcolor{black}{For $s>0$ \rcc{t}his formula extends $(-\Delta)^s$ to an unbounded 
operator on $L^2(\mathbb{R}^n)$, as well as to an operator on the space of tempered 
distributions $\mathcal{S}'(\mathbb{R}^n)$. }

\textcolor{black}{To continue the definition of $(-\Delta)^s$ 
to complex values of $s$, recall that the homogeneous function $0\neq \xi \mapsto 
\vert\xi \vert^{2s}$ admits an extension to a (tempered) homogeneous distribution 
on $\R^n$ for $s\in \mathbb{C}\setminus \mathcal{P}$ \rc{\cite[Equation 25.19]{classicalbook}}
, {with
\fj{\begin{equation}\label{eq:defP}
 \mathcal{P} := \{m \in \frac{1}{2}\mathbb{Z}: m\leq -\frac{n}{2}\}.
\end{equation}} }
Formula \eqref{fourierdef} then defines $(-\Delta
)^s$  for $s \in \mathbb{C}\setminus \mathcal{P}$. 
As $\vert\xi \vert^{2s}$ extends to a meromorphic function of $s \in \mathbb{C}$ with values in the space of tempered distributions, in the sense of  \cite{gro}, so does $(-\Delta
)^s = \mathcal{F}^{-1} \circ \vert \xi\vert^{2s} \circ \mathcal{F}$ as a meromorphic family in the space of operators from $\mathcal{S}(\mathbb{R}^n)$ to $\mathcal{S}'(\mathbb{R}^n)$. We refer to \cite[Section 25]{classicalbook} for details, as well as for the fact that $(-\Delta
)^s$ admits a holomorphic continuation to $s=m \in \mathcal{P}$ on the subspace \fj{\begin{equation}\label{orthogonality}\Phi_m := \left\{u \in \mathcal{S}(\mathbb{R}^n) : \forall \alpha \in \mathbb{N}^n_0\ \text{ with }\ |\alpha| \leq -2m - n:  \langle x^\alpha, u \rangle_{\mathbb{R}^n} = 0\right\}\ .
\end{equation} }
For a careful investigation when $s=-\frac{n}{2}$, where $\Phi_m$ consists of functions of mean $0$, see also \cite[Section 3]{stinga}.}

\textcolor{black}{Formula \eqref{fourierdef} finally 
shows that  $(-\Delta)^s$ is an operator of order $2\mathrm{Re}(s)$ and that for $s=1$ one 
recovers the ordinary Laplace operator. \rcu{For a bounded domain 
$\Omega\subset\R^n$, the former can be stated as: there holds the 
continuity $(-\Delta)^s: \widetilde{H}^s(\Omega) \to H^{-s}(\Omega)
$ for $s \in \mathbb{R}$.}}

\subsection{Dirichlet problem for the fractional Laplacian}

In this article the homogeneous Dirichlet problem for the fractional Laplacian 
plays a special role \cu{as an ``auxiliary problem'', which will help us 
construct preconditioners for \eqref{mainproblem}}. 

For a bounded 
Lipschitz domain $\Omega \subset \mathbb{R}^n$ and
$f \in L^2(\Omega)$, it is formally given by:
\begin{align}\label{e:FL}
\begin{cases}
 (-\Delta)^s u = f \qquad & \text{in } \Omega,\\
u = 0 \qquad & \text{in } \mathbb{R}^n\setminus \overline{\Omega}.
\end{cases}
\end{align}
For $s\in (0,1)$, its variational formulation is expressed in terms of the 
bilinear form $\cu{\bc}$ on $\widetilde{H}^s(\Omega)$,
\fj{\begin{equation}\label{e:bilinear}
{\bc}(u,v) := \frac{c_{n,s}}{2} \iint_{D} \frac{(u(x)-u(y))(v(x)-v(y))}{\vert x - 
y \vert^{n+2s}} \diff y \diff x\ ,
\end{equation}
where $D := (\mathbb{R}^n \times \Omega) \cup (\Omega \times \mathbb{R}^n) 
{=(\mathbb{R}^n \times \mathbb{R}^n) \setminus (\Omega^c \times \Omega^c)}$. }
Similar formulas for $s>1$ may be found in \jak{\cite[Section 1.1]{ajs}}.  

Note that formally
$$
\cu{\bc}(u,v) = \langle (-\Delta)^s u,v \rangle_{\rcu{\mathbb{R}^n}} - 
\jak{\frac{c_{n,s}}{2}}\iint_{\Omega^c \times \Omega^c} \frac{(u(x)-u(y))(v(x)-v(y))}{\vert x - 
y \vert^{n+2s}} \diff y \diff x,
$$
when $u,v \in H^s(\mathbb{R}^n)$, and the second term vanishes on $\widetilde{H
}^s(\Omega)$. \textcolor{black}{Here $\langle\cdot,\cdot\rangle_{\mathbb{R}^n}$ denotes the duality pairing from Section \ref{sec:notation}.}

\textcolor{black}{Using the Fourier definition \eqref{fourierdef}, the bilinear form 
\begin{equation}\label{cextend}
\cu{\bc}(u,v) = \langle (-\Delta)^s u,v \rangle_{\rcu{\mathbb{R}^n}} = \langle \mathcal{F}^{-1}(\vert\xi \vert^{2s} \mathcal{F}u),v \rangle_{\rcu{\mathbb{R}^n}} = \langle (\mathcal{F}^{-1}\vert\xi \vert^{2s}) \ast u,v \rangle_{\rcu{\mathbb{R}^n}}\end{equation} extends meromorphically to $s \in \mathbb{C}\setminus \mathcal{P}$. Here, $\ast$ denotes convolution. } \textcolor{black}{For $\mathrm{Re}(s) < 0$ the inverse Fourier transform $\mathcal{F}^{-1}\vert\xi \vert^{2s}$ is locally integrable and the integrand is only weakly singular. Specifically, $\mathcal{F}^{-1}\vert\xi \vert^{2s} = c_{n,s} |x|^{-n-2s}$ for $\mathrm{Re}(s)<0$, $s \not \in \mathcal{P}$ (\cite[Equation 25.25]{classicalbook}). For $s>0$ the relation between \eqref{cextend} and 
 \eqref{e:bilinear} is discussed in \cite[Section 25.4]{classicalbook}. }\\

The  weak formulation of \eqref{e:FL} reads as follows:\\

\noindent Find $u \in \widetilde{H}^s(\Omega)$ such that
\begin{equation}\label{e:FLweak}
\cu{\bc}(u,v) = \int_{\Omega} f v \mathrm{d}x, \qquad \forall v \in \widetilde{
H}^s(\Omega).
\end{equation}

\textcolor{black}{Moreover, the bilinear form} 
$\cu{\bc}$ is continuous and elliptic for $s>-\frac{n}{2}$ real: 
\rcu{t}here exist $C_{\cu{\bc}},\cu{\beta_{\bc}} > 0$ with
\begin{align*}
\cu{\bc}(u,v) \leq C_{\cu{\bc}} \|u\|_{\widetilde{H}^s(\Omega)}\|v\|_{\widetilde{
H}^s(\Omega)},\quad
\cu{\bc}(u,u) \geq \cu{\beta_{\bc}} \|u\|_{\widetilde{H}^s(\Omega)}^2\ .
\end{align*} 
\textcolor{black}{The ellipticity for $s>0$ follows by definition of the $\widetilde{
H}^s(\Omega)$-norm, while the case $s \in (-\frac{n}{2},0)$ is \rcc{a} classical 
\rcc{result} in potential theory \cite[Page 358]{l}.} 

\rcc{Therefore, b}y the Lax-Milgram theorem, the variational problem \eqref{e:FLweak} 
admits a unique solution, and the solution operator $f \mapsto u$ extends to an 
isomorphism from $H^{-s}(\Omega)$ to $\widetilde{H}^s(\Omega)$ for all \textcolor{black}{$s>-\frac{n}{2}$}.

\textcolor{black}{For $s \leq -\frac{n}{2}$ ellipticity 
requires additional assumptions, as in \eqref{orthogonality}. \rcc{Although we 
refrain from discussing these modifications in this article, it is worth pointing 
out that ellipticity is} well known for $n=1$ and $s = -\frac{1}{2}$ in the case 
of the weakly singular integral operator from Remark \ref{reltoBEM} \cite{jn}.}

\subsection{Solution operator in the unit ball}

\cu{Let us write $\calB_1$ for the unit ball $\calB_1(0) \subset \mathbb{R}^n$.
When $\Omega = \calB_1$}, explicit solution 
formulas are available. For $s>0$, the Green's function in this case \cu{is} 
given by
\fj{\begin{equation}\label{e:green}
G_s(x,y) := k_{n,s} \vert x-y\vert^{2s - n} \int_0^{r(x,y)} 
\frac{t^{s-1}}{(t+1)^{n/2}} \dt, \qquad \forall x,y \in \mathbb{R}^n, x\neq y.
\end{equation}}
Here $r(x,y) := \dfrac{(1-\vert x
\vert^2)_+ (1-\vert y \vert^2)_+}{\vert x-y \vert^2}$, \fj{$z_+ := \max\{z,0\}$} and $k_{n,s} := \dfrac{
2^{1-2s}}{|\partial \mathcal{B}_1| \mathsf{\Gamma}(s)^2} $.

\jak{For $s \in \mathbb{N}$, Formula \eqref{e:green} goes back to \cite{tb}, 
while for $s \in (0,1)$ it has long been known in potential theory and L\'{e}vy 
processes \rcu{(see e.g.~\rc{\cite[Chapter 1, Section 3]{l} and \cite[Chapter 5, Equation 3]{riesz})}}}. The extension to arbitrary order $s>0$ is 
more recent and may be found in \cite{ajs}. \\

\textcolor{black}{The following theorem from \cite{ajs} shows that $G_s$ in formula \eqref{e:green} indeed} defines the \textcolor{black}{weakly singular} integral 
kernel of the solution operator to \eqref{e:FL} \textcolor{black}{for $s>0$}. \textcolor{black}{More precisely, we} have the following 
explicit formula for the solution of the Dirichlet 
problem for the fractional Laplace operator in the unit ball $\calB_1$:

\begin{theorem}[\rc{\cite[Theorem 1.4]{ajs}}]\label{thm:explicit}
Let $s,\alpha >0$, $2s+\alpha \not \in \mathbb{N}$, \fj{$m := \lfloor s \rfloor$, 
and $\sigma := s-m$}. For $f \in C^{\alpha}
(\OB_1)$, 
define \fj{\begin{equation*}
u(x) := \begin{cases}
0, &\qquad \text{for } x \in\mathbb{R}^n \setminus \OB_1\\
\int_{\calB_1} G_s(x,y) f(y) \dy, &\qquad \rcu{\text{for }} x \in \calB_1
       \end{cases}.
\end{equation*}} Then $u \in C^{2s+\alpha}(\calB_1)$, $\delta^{1-\sigma} 
u \in C^{m,0}(\OB_1)$ and
\begin{align*}
(-\Delta)^s u = f \;in\; \calB_1,\quad u = 0 \; in \; \mathbb{R}^n \setminus 
\OB_1.
\end{align*}
Here \fj{$\delta(x) := \mathrm{dist}(x, \partial \calB_1)$} for $x$ in a neighborhood 
of $\partial \calB_1$.
\end{theorem}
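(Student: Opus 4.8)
The exterior condition is immediate from \eqref{e:green}: for $|x| \geq 1$ one has $(1-|x|^2)_+ = 0$, hence $r(x,y) = 0$ and the inner integral vanishes, so $G_s(x,y) = 0$ and $u \equiv 0$ on $\R^n \setminus \OB_1$. The substance of the statement is the interior equation $(-\Delta)^s u = f$ in $\calB_1$ together with the regularity. The plan is to first reduce the equation to a statement about the kernel alone: for each fixed $y \in \calB_1$, the function $x \mapsto G_s(x,y)$ should be the Green function of \eqref{e:FL} with pole at $y$, i.e.\ it vanishes on $\R^n \setminus \OB_1$, satisfies $(-\Delta)^s_x G_s(x,y) = 0$ for $x \neq y$ in $\calB_1$, and carries the correct normalization of its singularity at $y$. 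Granting this, one either verifies $(-\Delta)^s u = f$ pointwise in $\calB_1$ — evaluating the principal value in \eqref{e:fl}, exchanging the order of integration with $\int_{\calB_1}(\cdot)\,f(y)\,\dy$, and using $f \in C^\alpha$ to control the near-diagonal contribution — or identifies $u$ with the variational solution of \eqref{e:FLweak} (its higher-order analogue for $s > 1$, cf.\ \cite{ajs}) and bootstraps.

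For the Green-function identity I would treat the base case $s \in (0,1)$ first, which is classical potential theory (Riesz, Boggio, Landkof \rc{\cite[Chapter 1, Section~3]{l}}). One writes $G_s(x,y) = E_s(x-y) - H_s(x,y)$, where $E_s$ is the full-space fundamental solution of $(-\Delta)^s$ (a constant times $|x-y|^{2s-n}$ when $2s < n$, cf.\ Section~\ref{sec:green}) and $H_s(\cdot,y)$ is the $s$-harmonic function in $\calB_1$ agreeing with $E_s(\cdot-y)$ on $\R^n \setminus \OB_1$; the one-dimensional integral in \eqref{e:green} is precisely this $s$-harmonic completion. This can be checked either via the conformal (M\"obius) covariance of $(-\Delta)^s$ on the ball — a M\"obius automorphism of $\calB_1$ moves $y$ to the center, where the verification collapses to a radial identity among Beta and Gamma functions — or by performing the angular integration in \eqref{e:fl} directly, using that $G_s(x,y)$ depends on $x$ only through $|x|$ and $|x-y|$. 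For general $s = m + \sigma$ with $m = \lfloor s \rfloor \geq 1$, the natural route is to use $(-\Delta)^s = (-\Delta)^m(-\Delta)^\sigma$ and to relate $(-\Delta)^m_x G_s(x,y)$ to the order-$\sigma$ Green kernel, thereby reducing to $\sigma \in (0,1)$; making this rigorous is delicate, since $G_s(\cdot,y)$ is only $\delta^\sigma$-regular at $\partial\calB_1$ and the local operator $(-\Delta)^m$ must be controlled across the boundary. The hypothesis $2s + \alpha \notin \N$ serves to keep every exponent that arises non-integral, ruling out logarithmic corrections.

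For the regularity, the interior bound $u \in C^{2s+\alpha}_{\mathrm{loc}}(\calB_1)$ is a Schauder estimate for the Riesz potential: on compact subsets of $\calB_1$ the kernel $G_s(x,y)$ equals $E_s(x-y)$ modulo a function smooth in $x$, so locally $u$ differs from the Riesz potential $\mathcal{I}_{2s}f$ by a smooth term, and $f \in C^\alpha$ with $2s + \alpha \notin \N$ yields H\"older regularity of order $2s + \alpha$. The boundary statement $\delta^{1-\sigma}u \in C^{m,0}(\OB_1)$ I would obtain by comparison against explicit barriers built from powers of $\delta(x) = \mathrm{dist}(x,\partial\calB_1)$: the model function $\delta^\sigma$ has $\sigma$-fractional Laplacian bounded up to $\partial\calB_1$, it captures the leading boundary behavior of $u$, and the weight $\delta^{1-\sigma}$ is tailored to absorb the ensuing singular behavior of $u$ and of its derivatives up to order $m$. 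I expect this boundary analysis in the higher-order case, together with the reduction in the previous paragraph, to be the main obstacle: the solution is genuinely non-smooth up to $\partial\calB_1$ — only of H\"older class $C^\sigma$ there when $s \in (0,1)$, and with no maximum principle available when $s > 1$ — so the $\delta^\sigma$-weight has to be tracked accurately through both the $m$-fold differentiation and the decomposition of the kernel. This careful analysis is carried out in \cite{ajs}.
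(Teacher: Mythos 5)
The paper gives no proof of this statement: it is cited directly as \cite[Theorem 1.4]{ajs}, so there is no argument in the paper to compare against. Your sketch is a reasonable outline of what a proof involves — the exterior vanishing is indeed trivial from $(1-|x|^2)_+ = 0$, the Green-function reduction, the classical base case $s\in(0,1)$, the $(-\Delta)^m(-\Delta)^\sigma$ bootstrap, and the $\delta^{1-\sigma}$-weighted boundary regularity are the right ingredients — and you correctly flag the higher-order case and the boundary estimates as the delicate parts, deferring the technical work to \cite{ajs}, which is exactly what the paper does.
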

\textcolor{black}{In particular, $u$ defines a solution to the weak formulation 
\eqref{e:FLweak} relevant for  finite element approximations.}\\

The previous theorem motivates us to 
\begin{itemize}
 \item derive formulas for $G_s(x,y)$ which are easily computable for use as a 
 preconditioner; and
 \item extend the aforementioned formula to negative values of $s$.\\
\end{itemize}

With these purposes in mind, the following Lemma shows that Boggio's formula 
\eqref{e:green} can be implemented efficiently and allows further insight for 
general values of $n$ and $s$: 
\begin{lemma}\label{2f1lemma}
Let $s>0$. Then 
\begin{equation*}
G_{s}(x,y) = s^{-1}{k_{n,s}} \vert x-y\vert^{2s - n} {r(x,y)^s}{\,}_2
F_1\left(\frac{n}{2},s; s+1; -r(x,y)\right),
\end{equation*}
where ${\,}_2 F_1$ is the hypergeometric function.
\end{lemma}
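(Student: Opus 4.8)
The plan is to reduce Boggio's integral to a standard Euler-type integral representation of the Gauss hypergeometric function. Starting from \eqref{e:green}, write $r=r(x,y)$ and consider the integral
\[
I(r):=\int_0^{r}\frac{t^{s-1}}{(t+1)^{n/2}}\diff t.
\]
The natural first move is the substitution $t = r\,\tau$ with $\tau\in[0,1]$, which turns the integral into
\[
I(r)=r^{s}\int_0^{1}\frac{\tau^{s-1}}{(1+r\tau)^{n/2}}\diff \tau.
\]
At this point I would invoke Euler's integral representation of the hypergeometric function,
\[
{}_2F_1(a,b;c;z)=\frac{\mathsf{\Gamma}(c)}{\mathsf{\Gamma}(b)\,\mathsf{\Gamma}(c-b)}\int_0^1 \tau^{b-1}(1-\tau)^{c-b-1}(1-z\tau)^{-a}\diff \tau,
\]
valid for $\mathrm{Re}(c)>\mathrm{Re}(b)>0$. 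Choosing $b=s$, $c=s+1$ (so that $c-b=1$ and the factor $(1-\tau)^{c-b-1}=1$ disappears), $a=n/2$ and $z=-r$, the representation gives precisely
\[
\int_0^1 \tau^{s-1}(1+r\tau)^{-n/2}\diff \tau = \frac{\mathsf{\Gamma}(s)\mathsf{\Gamma}(1)}{\mathsf{\Gamma}(s+1)}\,{}_2F_1\!\left(\tfrac{n}{2},s;s+1;-r\right)=\frac{1}{s}\,{}_2F_1\!\left(\tfrac{n}{2},s;s+1;-r\right),
\]
using $\mathsf{\Gamma}(s+1)=s\,\mathsf{\Gamma}(s)$. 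Substituting back into \eqref{e:green} yields
\[
G_s(x,y)=k_{n,s}\,|x-y|^{2s-n}\,I(r)=k_{n,s}\,|x-y|^{2s-n}\,r^{s}\cdot\frac{1}{s}\,{}_2F_1\!\left(\tfrac{n}{2},s;s+1;-r\right),
\]
which is the claimed identity.

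The only genuine point requiring care is the applicability of Euler's formula: its classical form asks for $\mathrm{Re}(c)>\mathrm{Re}(b)>0$, which here reads $s+1>s>0$, automatically satisfied since $s>0$; and it asks that $1-z\tau$ avoid the branch cut, i.e.\ $z\notin[1,\infty)$, which holds because $z=-r\le 0$. Hence no subtlety arises and the identity holds for all real $s>0$ and all $x\neq y$. I would also remark that the right-hand side makes sense and is finite because the ${}_2F_1$ series has radius of convergence $1$ and, for the boundary and exterior values of $r$ that occur (one always has $r\ge 0$, and $r$ may exceed $1$), the function continues analytically via the same Euler integral, which converges for every $r\ge 0$; alternatively one may quote the connection formula expressing ${}_2F_1$ at argument $-r$ in terms of arguments $r/(1+r)\in[0,1)$. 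I do not expect any real obstacle here; the "hard part", such as it is, is merely bookkeeping of the Gamma-function constants and confirming that the degenerate parameter choice $c-b=1$ is permitted in Euler's representation (it is, since $\mathsf{\Gamma}(c-b)=\mathsf{\Gamma}(1)=1$ and the integrand $(1-\tau)^{0}$ is just $1$).
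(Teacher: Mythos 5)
Your proof is correct and proceeds along essentially the same lines as the paper's: both reduce the Boggio integral to Euler's integral representation of $_2F_1$ via the substitution $t=r\tau$ with parameters $a=\tfrac n2$, $b=s$, $c=s+1$, $z=-r$. The extra care you take about the validity of Euler's formula and about $r$ possibly exceeding $1$ is sound but not strictly needed for the claim.
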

\begin{proof}
We need to prove
\begin{equation*}
 \int_0^{r} \frac{t^{s-1}}{(t+1)^{n/2}} \dt =  \frac{r^s}{s}{\,}_2F_1\left(
 \frac{n}{2}\ ,s; s+1; -r\right).
\end{equation*}
This, however, follows directly from the integral representation of ${\,}_2F_1$ 
\rc{\cite{w2}},
\jak{\begin{equation*}
\begin{split}
{\,}_2F_1\left(\frac{n}{2},s;s+1;-r\right) &= \frac{\Gamma(s+1)}{\Gamma(s)} \int_0^1 t^{
s-1} (1+tr)^{-\frac{n}{2}} \dt\\
&= s r^{-s} \int_0^r \frac{t^{s-1}}{(1+t)^{\frac{n}{2}}} \dt\ .
\end{split}
\end{equation*}
}
\rcc{\qed}
\end{proof}
\begin{remark}\label{rem:1}
\jak{For a generic value of $s$ computational libraries are available to efficiently evaluate the hypergeometric 
function ${\,}_2F_1$, see for example \rc{\cite[Section 4]{pop}}. For specific 
values of $s$, explicit formulas for $G_s$ in terms of elementary functions are 
available and allow for more efficient computations as highlighted in Remark~\ref{rem:3}.}
\end{remark}

The following result provides an explicit formula for the holomorphic 
continuation of the integral kernel $G_s$ from \eqref{e:green}. \textcolor{black}{We restrict ourselves to the case $\mathrm{Re}(s)>-\frac{n}{2}$ relevant for applications.}

\begin{lemma}
The map $(0,\infty)\ni s \mapsto G_s(x,y) \in \mathcal{D}'(\calB_1 \times 
\calB_1)$ extends to  a  holomorphic family of distributions for \textcolor{black}{$s >-\frac{n}{2}$.} For  $N\in \mathbb{N}_0$, the holomorphic continuation of $G_s(x,
y)$ to the half-plane \textcolor{black}{$\mathrm{Re}(s)>\max\{-N-1, -\frac{n}{2}\}$} is given by
\begin{align}
G_s(x,y) &= k_{n,s}\ |x-y|^{2s-n}\left\{\left(\prod_{j=0}^N \frac{\frac{n}
{2}+j}{s+j}\right) \ \int_0^{r(x,y)} \frac{t^{s+N}}{(t+1)^{1+N+n/2}} \dt \right. \nonumber\\ 
& \qquad +\left. \sum_{k=0}^N \left(\prod_{j=0}^{k-1} \frac{\frac{n}{2}+j}{s+j}
\right) \frac{r(x,y)^{s+k}}{(s+k) (r(x,y)+1)^{k+n/2}}\right\}\ . \label{extensionformula}
\end{align}
\end{lemma}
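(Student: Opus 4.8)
The plan is to establish the extension formula \eqref{extensionformula} by repeated integration by parts applied to the integral in Boggio's formula \eqref{e:green}, which produces a sequence of identities valid initially for $s>0$ and then recognized as holomorphic in a progressively larger half-plane. First I would fix $x\neq y$ in $\calB_1$, abbreviate $r = r(x,y) \ge 0$, and focus on the scalar integral $I_s(r) := \int_0^r t^{s-1}(t+1)^{-n/2}\,\dt$, which for $s>0$ converges and depends holomorphically on $s$ in the right half-plane. The key single step is the integration-by-parts identity
\begin{equation*}
I_s(r) = \int_0^r t^{s-1}(t+1)^{-n/2}\,\dt = \frac{r^s}{s}(r+1)^{-n/2} + \frac{n}{2}\cdot\frac{1}{s}\int_0^r t^{s}(t+1)^{-n/2-1}\,\dt\ ,
\end{equation*}
obtained by writing $t^{s-1} = \frac{1}{s}\frac{d}{dt}(t^s)$ and differentiating $(t+1)^{-n/2}$; the boundary term at $t=0$ vanishes because $s>0$. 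Iterating this $N+1$ times, and writing $J_{s,k}(r) := \int_0^r t^{s+k}(t+1)^{-n/2-1-k}\,\dt$ for the remainder integrals, yields precisely the bracketed expression in \eqref{extensionformula}: the finite sum collects the boundary terms $\frac{r^{s+k}}{s+k}(r+1)^{-k-n/2}$ with the accumulated products $\prod_{j=0}^{k-1}\frac{n/2+j}{s+j}$, and the last term is the $N$-fold-reduced integral with prefactor $\prod_{j=0}^{N}\frac{n/2+j}{s+j}$.

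The second part is the analytic-continuation argument. The remainder integral $\int_0^{r} t^{s+N}(t+1)^{-1-N-n/2}\,\dt$ converges absolutely and defines a holomorphic function of $s$ whenever $\operatorname{Re}(s) > -N-1$, since the integrand behaves like $t^{\operatorname{Re}(s)+N}$ near $t=0$. The rational prefactors $\prod_{j=0}^{N}\frac{n/2+j}{s+j}$ and $\prod_{j=0}^{k-1}\frac{n/2+j}{s+j}$ are holomorphic away from $s\in\{0,-1,\dots,-N\}$, but I would note that in \eqref{e:green} the constant $k_{n,s} = 2^{1-2s}/(|\partial\calB_1|\,\mathsf{\Gamma}(s)^2)$ carries a double zero of $\mathsf{\Gamma}(s)^{-2}$ at each nonpositive integer, which cancels the simple poles introduced by the products; hence $G_s(x,y)$, as the product of $k_{n,s}|x-y|^{2s-n}$ with the bracket, is genuinely holomorphic in $s$ on $\operatorname{Re}(s) > \max\{-N-1, -n/2\}$. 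The constraint $\operatorname{Re}(s) > -n/2$ is needed to ensure $|x-y|^{2s-n}$ is locally integrable on $\calB_1\times\calB_1$, so that the resulting object is a well-defined distribution and the identity holds in $\mathcal{D}'(\calB_1\times\calB_1)$ rather than merely pointwise; I would make this precise by testing against $\phi \in C_0^\infty(\calB_1\times\calB_1)$, using Fubini and dominated convergence to move the $s$-derivative inside the integral. Letting $N\to\infty$ then gives the holomorphic family on the full half-plane $\operatorname{Re}(s)>-n/2$, matching the earlier continuation of $(-\Delta)^s$.

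The main obstacle I anticipate is bookkeeping rather than conceptual: getting the combinatorics of the iterated integration by parts exactly right, in particular verifying that after $k$ steps the boundary term acquires the coefficient $\prod_{j=0}^{k-1}\frac{n/2+j}{s+j}$ and the surviving integral has exponents shifted to $t^{s+k}$ and $(t+1)^{-n/2-1-k}$, and that the indexing of the final sum (running $k=0$ to $N$) lines up with the $N$-fold-reduced tail integral carrying the full product up to $j=N$. A clean way to organize this is to prove the one-step reduction $G_s$-bracket at level $k$ equals the bracket at level $k+1$ by a single integration by parts on $J_{s,k}$, and then induct on $k$; the pole-cancellation via $\mathsf{\Gamma}(s)^{-2}$ should be stated carefully but is routine once the products are identified. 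A secondary point to handle with care is the identification with the distributional setting: one must check that the only genuine singularity of $G_s(x,y)$ is the factor $|x-y|^{2s-n}$ (the bracket being bounded near the diagonal, since $r(x,y)\to 0$ there), which is what restricts the continuation to $\operatorname{Re}(s)>-n/2$ and no further.
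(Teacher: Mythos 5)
Your proposal is correct and takes essentially the same route as the paper: both derive the one-step integration-by-parts identity $\int_0^r t^{s-1}(1+t)^{-n/2}\,\dt = \frac{n}{2s}\int_0^r t^s(1+t)^{-1-n/2}\,\dt + \frac{r^s}{s(1+r)^{-n/2}}$, iterate it (the paper organizes this as an explicit induction on $N$), observe that the apparent simple poles of the bracket at $s\in\{0,-1,\dots,-N\}$ are cancelled by the double zeros of $k_{n,s}=2^{1-2s}/(|\partial\calB_1|\,\mathsf{\Gamma}(s)^2)$, and restrict to $\mathrm{Re}(s)>-\frac{n}{2}$ so that $|x-y|^{2s-n}$ remains locally integrable and the identity holds in $\mathcal{D}'(\calB_1\times\calB_1)$. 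The only minor differences are cosmetic: the paper spells out the induction step rather than invoking "iterate $N+1$ times," and it does not explicitly discuss testing against $\phi\in C_0^\infty$, while your remark on Fubini and dominated convergence is a useful addition to justify the distributional holomorphy.
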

\begin{proof}
Using integration by parts, for $\mathrm{Re}(s)>0$ \textcolor{black}{we observe the identity}
\begin{equation}\label{e:keyid}
\int_0^{r(x,y)} \frac{t^{s-1}}{(1+t)^{n/2}} \dt =  \frac{n}{2s} \int_0^{r(x,y)} 
\frac{t^s}{(1+t)^{1+n/2}} \dt + \frac{r(x,y)^s}{s(r(x,y)+1)^{n/2}}.
\end{equation}
\textcolor{black}{Together with \eqref{e:green}, we obtain
{
\fj{\begin{flalign}\label{e:ibp1}
&G_{s}(x,y)= k_{n,s} \vert x-y\vert^{2s - n} \left(\frac{n}{2s} \int_0^{r(x,y)} 
\frac{t^s \dt}{(1+t)^{1+n/2}}  + \frac{r(x,y)^s}{s(r(x,y)+1)^{n/2}}\right)
\end{flalign}}}}
\textcolor{black}{with the right hand side defined for $s \neq 0$, $\mathrm{Re}(s)>\max\{-1, -\frac{n}{2}\}$. }
Because $\mathsf{\Gamma}(s)$ has simple poles for $s\in-\mathbb{N}_0$, but no 
zeros, and $k_{n,s} = \dfrac{2^{1-2s}}{|\partial \mathcal{B}_1| \mathsf{\Gamma}
(s)^2}$, for $x\neq y$ the kernel $G_s(x,y)$ extends holomorphically to $s=0$, 
with a simple zero in $s=0$. 
The asserted formula follows for $N=0$.

\textcolor{black}{The proof for general $N$ follows by induction: We assume that \eqref{extensionformula} holds for $N \in \mathbb{N}_0$. Note that for $\mathrm{Re}(s)>\max\{-N-1, -\frac{n}{2}\}$,}
\textcolor{black}{\begin{align*}
\int_0^{r(x,y)} \frac{t^{s+N}}{(1+t)^{1+N+n/2}} \dt &=  \frac{\frac{n}{2}+N+1}{s+N+1} \int_0^{r(x,y)} 
\frac{t^{s+N+1}}{(1+t)^{2+N+n/2}} \dt \nonumber \\ &\qquad + \frac{r(x,y)^{s+N+1}}{(s+N+1)(r(x,y)+1)^{1+N+n/2}}.
\end{align*}}
\textcolor{black}{The right hand side of \eqref{e:keyid} is defined for $s \not \in -\mathbb{N}_0$, $\mathrm{Re}(s)>\max\{-N-2, -\frac{n}{2}\}$. 
We conclude, }
\textcolor{black}{\begin{align*}
&\left(\prod_{j=0}^N \frac{\frac{n}
{2}+j}{s+j}\right) \ \int_0^{r(x,y)} \frac{t^{s+N}}{(t+1)^{1+N+n/2}} \dt 
+ \sum_{k=0}^N \left(\prod_{j=0}^{k-1} \frac{\frac{n}{2}+j}{s+j}
\right) \frac{r(x,y)^{s+k}}{(s+k) (r(x,y)+1)^{k+n/2}}\\
&= \left(\prod_{j=0}^{N} \frac{\frac{n}{2}+j}{s+j}\right) \ \left\{\frac{\frac{n}{2}+N+1}{s+N+1} \int_0^{r(x,y)} 
\frac{t^{s+N+1}}{(1+t)^{2+N+n/2}} \dt \right. \\ &\qquad + \left. \frac{r(x,y)^{s+N+1}}{(s+N+1)(r(x,y)+1)^{1+N+n/2}}\right\} +\sum_{k=0}^N \left(\prod_{j=0}^{k-1} \frac{\frac{n}{2}+j}{s+j}\right) \frac{r(x,y)^{s+k}}{(s+k) (r(x,y)+1)^{k+n/2}}\ .
\end{align*}}
\textcolor{black}{Equation \eqref{extensionformula} for $N+1$ follows. As above, \eqref{extensionformula} extends to $s = -N-1$ because the simple pole in the denominator is cancelled by the zero of the prefactor $k_{n,s}$.}
\rcc{\qed}
\end{proof}

\begin{proposition}Let 
$\mathrm{Re}(s)>-\frac{n}{2}$ and $f \in C^\infty_0(\calB_1)$.
Then the distribution  \fj{$u_s := \mathrm{op}(G_s) f \in \mathcal{D}'(\calB_1)$} defined by 
\fj{\begin{equation*}
\langle \mathrm{op}(G_s) f, v \rangle_{\mathcal{B}_1} = \langle G_s, f \otimes v 
\rangle_{\mathcal{B}_1 \otimes \mathcal{B}_1}, \qquad \forall v \in C^{{\infty}
}_0(\mathcal{B}_1),
\end{equation*}}
belongs to $\widetilde{H}^{\mathrm{Re}(s)}(\calB_1)$ and satisfies  the weak formulation \eqref{e:FLweak},
\begin{equation*}
\cu{\bc}(u_s,v) = \int_{\Omega} f v \mathrm{d}x, \qquad \forall v \in \widetilde{
H}^{\mathrm{Re}(s)}(\Omega).
\end{equation*}
Here, $(-\Delta)^s$ is defined by the continuation of \eqref{fourierdef}. 
\end{proposition}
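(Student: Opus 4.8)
The plan is to establish the identity for real $s > 0$ first, where Theorem~\ref{thm:explicit} is available, and then propagate it to $\mathrm{Re}(s) > -\frac{n}{2}$ by analytic continuation in $s$, using the holomorphic family $G_s$ constructed in the preceding Lemma and the meromorphic family $(-\Delta)^s$ (holomorphic on $\mathrm{Re}(s)>-\frac n2$, since $\mathcal P$ only meets that half-plane when $n=1$, $s=-\tfrac12$, which is excluded; more carefully, one checks $\mathcal P \cap \{\mathrm{Re}(s)>-\frac n2\} = \emptyset$). So both sides of the desired identity $\bc(u_s,v) = \int_\Omega f v$ are holomorphic functions of $s$ on $\{\mathrm{Re}(s)>-\frac n2\}$ once we know the regularity claim $u_s \in \widetilde H^{\mathrm{Re}(s)}(\calB_1)$, and two holomorphic functions agreeing on the ray $(0,\infty)$ agree everywhere on the connected set.

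The first step is the base case $s \in (0,\infty)$, $2s \notin \mathbb N$ (a dense subset, enough for analytic continuation): for $f \in C^\infty_0(\calB_1) \subset C^\alpha(\OB_1)$ with any $\alpha \in (0,1)$, Theorem~\ref{thm:explicit} gives that $u := \mathrm{op}(G_s)f$ lies in $C^{2s+\alpha}(\calB_1)$ with $\delta^{1-\sigma}u \in C^{m,0}(\OB_1)$ and solves $(-\Delta)^s u = f$ in $\calB_1$, $u=0$ outside $\OB_1$. From the boundary decay $\delta^{1-\sigma}u$ bounded one reads off $u \in \widetilde H^{s}(\calB_1)$ (this is exactly the behaviour of the Poisson-type solution; for $s\in(0,1)$ it is classical, and for general $s>0$ it follows from the $C^{m,0}$ statement together with the fractional part $\sigma$, cf.\ \cite{ajs}). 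Knowing $u \in \widetilde H^s(\calB_1)$ and $(-\Delta)^s u = f$ in the distributional sense on $\calB_1$, the identity $\bc(u,v) = \langle (-\Delta)^s u, v\rangle_{\mathbb R^n} = \int_{\calB_1} f v$ for all $v \in \widetilde H^s(\calB_1)$ is precisely the content of the displayed ``formal'' computation after \eqref{e:bilinear}, which is rigorous on $\widetilde H^s(\calB_1)$ (the exterior double-integral term vanishes there), combined with \eqref{cextend}; so the weak formulation holds for real $s>0$ with $2s\notin\mathbb N$.

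The second step is the continuation. Fix $f \in C^\infty_0(\calB_1)$ and $v \in C^\infty_0(\calB_1)$. The right-hand side $\int_\Omega fv$ is constant in $s$. The left-hand side, written via \eqref{cextend} as $\langle (\mathcal F^{-1}|\xi|^{2s}) * u_s, v\rangle$, is holomorphic in $s$ on $\mathrm{Re}(s)>-\frac n2$: the family $G_s \in \mathcal D'(\calB_1\times\calB_1)$ is holomorphic there by the previous Lemma, hence so is $u_s = \mathrm{op}(G_s)f$ as a $\calB_1$-distribution (testing against the fixed $f\otimes v$), and the convolution kernel $\mathcal F^{-1}|\xi|^{2s}$ is a holomorphic family of tempered distributions on the same half-plane — indeed for $\mathrm{Re}(s)<0$ it is the locally integrable $c_{n,s}|x|^{-n-2s}$ and for general $s$ one uses the meromorphic family from \cite[Section~25]{classicalbook}. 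So $s \mapsto \bc(u_s,v)$ is holomorphic, agrees with $\int fv$ on $(0,\infty)\setminus\frac12\mathbb N$, and therefore everywhere on the half-plane; by density of $C^\infty_0(\calB_1)$ in $\widetilde H^{\mathrm{Re}(s)}(\calB_1)$ and continuity of $\bc(u_s,\cdot)$ the identity extends to all test functions $v \in \widetilde H^{\mathrm{Re}(s)}(\calB_1)$. It remains to record that $u_s \in \widetilde H^{\mathrm{Re}(s)}(\calB_1)$: since $\bc(u_s,v) = \int fv$ identifies $u_s$ with the image of $f$ under the solution operator of the coercive, continuous bilinear form $\bc$ on $\widetilde H^{\mathrm{Re}(s)}(\calB_1)$ (coercivity for $\mathrm{Re}(s)>-\frac n2$ was recalled above), the Lax--Milgram isomorphism forces $u_s$ to coincide with that unique $\widetilde H^{\mathrm{Re}(s)}(\calB_1)$-element, so in particular $u_s \in \widetilde H^{\mathrm{Re}(s)}(\calB_1)$.

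\textbf{Main obstacle.} The delicate point is the logical ordering of the regularity claim and the identity for complex $s$: a priori $u_s=\mathrm{op}(G_s)f$ is only a distribution on $\calB_1$, and one wants $u_s \in \widetilde H^{\mathrm{Re}(s)}(\calB_1)$ before invoking Lax--Milgram, yet the clean route to that membership is precisely the Lax--Milgram identification. I would resolve this by first proving, by analytic continuation as above but pairing only against $v \in C^\infty_0(\calB_1)$, the weak identity $\langle u_s, A_\ast v\rangle = \int fv$ where $A_\ast v := \mathcal F^{-1}(|\xi|^{2s}\mathcal Fv)$ (legitimate since $(-\Delta)^s$ maps Schwartz functions to tempered distributions and $v$ is fixed and smooth); this shows $(-\Delta)^s u_s = f$ on $\calB_1$ distributionally, and since $u_s$ is supported in $\OB_1$ with $(-\Delta)^s$ of order $2\mathrm{Re}(s)$, elliptic regularity / mapping properties of $(-\Delta)^s$ give $u_s \in \widetilde H^{\mathrm{Re}(s)}(\calB_1)$ — alternatively one bounds the $\widetilde H^{\mathrm{Re}(s)}$-norm of $u_s$ directly from the holomorphic family $G_s$ via a uniform-in-$s$ kernel estimate on compact $s$-sets. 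With that in hand the extension of the weak formulation from $C^\infty_0(\calB_1)$ to all of $\widetilde H^{\mathrm{Re}(s)}(\calB_1)$ is routine density.
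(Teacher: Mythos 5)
Your argument takes essentially the same route as the paper's proof: establish the identity on a subset of the real positive $s$-axis using Theorem~\ref{thm:explicit}, then propagate to the half-plane $\mathrm{Re}(s)>-\frac{n}{2}$ by analytic continuation of the holomorphic families $s\mapsto G_s$ (from the preceding Lemma) and $s\mapsto(-\Delta)^s$. Your treatment of the membership $u_s \in \widetilde{H}^{\mathrm{Re}(s)}(\calB_1)$ for complex $s$ is considerably more careful than the paper's one-sentence continuation step, and your ``main obstacle'' paragraph correctly flags a point the paper leaves implicit and proposes a reasonable way to close it.
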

\begin{proof}
\textcolor{black}{From Theorem \ref{thm:explicit}, note that for $s \in (0,1)$ the function $u_s$ satisfies $(-\Delta)^s u_s = (-\Delta)^s \left( \mathrm{op}(G_s) f \right)= f$ in $H^{-s}(\Omega)$, i.e.~$u_s$ satisfies  the weak formulation \eqref{e:FLweak}. As both the operator $(-\Delta)^s$ with Dirichlet exterior conditions and $\mathrm{op}(G_s)$ are holomorphic for $s$ in the connected set 
$\mathrm{Re}(s)>-\frac{n}{2}$, the identity extends from 
$s\in (0,1)$ to $\mathrm{Re}(s)>-\frac{n}{2}$.
}\rcc{\qed}
\end{proof}

\vspace{0.3cm}

For numerical applications, we require the bilinear form of the solution 
operator $\mathrm{op}(G_s)$. It is defined as
\fj{\begin{equation}
\bb_{{s}}(u,v) := p.f.~\int_{\calB_1} \int_{\calB_1} G_s(x,y) u(y) v(x) \dy \dx,
\end{equation}}
for $u,v \in C^{\infty}(\OB_1)$. 

\textcolor{black}{The continuity and \rcc{ellipticity} of $\bb_{\rcu{s}}$ in $\widetilde{H}^s(\calB_1)$ for all $s>0$ 
follow from the continuity and \rcc{ellipticity} of $\rcc{\bc}$, as its inverse bilinear form.}
From the density of $C^{\cu{\infty}}(
\OB_1)$ in $H^{-s}(\calB_1)$, we conclude:
\begin{lemma}\label{blemmaball} \rcu{Let $s>-\frac{n}{2}$. The bilinear form} $\bb_{\rcu{s}}$ extends 
to a continuous and elliptic bilinear form $\bb_{\rcu{s}}: H^{-s}(\calB_1) 
\times H^{-s}(\calB_1) \rightarrow \mathbb{R}$. More precisely, there exist 
$\cu{C_{\bb_{\rcu{s}}}}, \beta_{\cu{\bb_{\rcu{s}}}} >0$, such that
\begin{align*}
\bb_{\rcu{s}}(u,v) \leq \cu{C_{\bb_{\rcu{s}}}} \|u\|_{H^{-s}(\calB_1)}\|v\|_{H^{-s}(\calB_1)}\ , \quad
\bb_{\rcu{s}}(u,u) \geq \beta_{\cu{\bb_{\rcu{s}}}} \|u\|^2_{H^{-s}(\calB_1)}.
\end{align*}
\end{lemma}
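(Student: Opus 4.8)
The form $\bb_{s}$ is, by construction, the bilinear form of the solution operator $\mathrm{op}(G_{s})$, and the preceding proposition (together with Theorem~\ref{thm:explicit} for $s>0$) identifies $\mathrm{op}(G_{s})$ on smooth data with the inverse of the isomorphism $(-\Delta)^{s}\colon\widetilde{H}^{s}(\calB_1)\to H^{-s}(\calB_1)$ attached to the continuous, elliptic form $\bc$. The plan is therefore to carry out the elementary ``inverse bilinear form'' computation: transport the bounds for $\bc$ on $\widetilde{H}^{s}(\calB_1)$ to $\bb_{s}$ on $H^{-s}(\calB_1)$ through this isomorphism, and then extend from smooth functions to all of $H^{-s}(\calB_1)$ by density. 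The computation rests on the key identity
\begin{equation*}
\bb_{s}(f,g)=\bc\bigl(\mathrm{op}(G_{s})f,\,\mathrm{op}(G_{s})g\bigr),\qquad f,g\in C_{0}^{\infty}(\calB_1).
\end{equation*}
To obtain it, set $u:=\mathrm{op}(G_{s})f$ and $v:=\mathrm{op}(G_{s})g$; the proposition gives $u,v\in\widetilde{H}^{s}(\calB_1)$ with $\bc(u,w)=\langle f,w\rangle_{\calB_1}$ and $\bc(v,w)=\langle g,w\rangle_{\calB_1}$ for all $w\in\widetilde{H}^{s}(\calB_1)$. By the symmetry $G_{s}(x,y)=G_{s}(y,x)$ and the defining relation for $\mathrm{op}(G_{s})$ one has $\bb_{s}(f,g)=\langle\mathrm{op}(G_{s})f,g\rangle_{\calB_1}=\langle u,g\rangle_{\calB_1}$; taking $w=u$ in the weak equation for $v$ and using the symmetry of $\bc$ yields $\langle u,g\rangle_{\calB_1}=\bc(v,u)=\bc(u,v)$.

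Next I would insert the quantitative mapping bounds. The Lax--Milgram estimate for $\bc$ gives $\|\mathrm{op}(G_{s})\|_{H^{-s}(\calB_1)\to\widetilde{H}^{s}(\calB_1)}\le\beta_{\bc}^{-1}$, while $\bc(u,v)=\langle(-\Delta)^{s}u,v\rangle_{\mathbb{R}^{n}}$ on $\widetilde{H}^{s}(\calB_1)$ gives $\|(-\Delta)^{s}\|_{\widetilde{H}^{s}(\calB_1)\to H^{-s}(\calB_1)}\le C_{\bc}$, hence $\|\mathrm{op}(G_{s})f\|_{\widetilde{H}^{s}(\calB_1)}\ge C_{\bc}^{-1}\|f\|_{H^{-s}(\calB_1)}$ for every $f$. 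Feeding these into the key identity,
\begin{align*}
|\bb_{s}(f,g)|=|\bc(u,v)|&\le C_{\bc}\,\|u\|_{\widetilde{H}^{s}(\calB_1)}\|v\|_{\widetilde{H}^{s}(\calB_1)}\le C_{\bc}\beta_{\bc}^{-2}\,\|f\|_{H^{-s}(\calB_1)}\|g\|_{H^{-s}(\calB_1)},\\
\bb_{s}(f,f)=\bc(u,u)&\ge\beta_{\bc}\,\|u\|_{\widetilde{H}^{s}(\calB_1)}^{2}\ge\beta_{\bc}C_{\bc}^{-2}\,\|f\|_{H^{-s}(\calB_1)}^{2},
\end{align*}
so $\bb_{s}$ is continuous and elliptic on the dense subspace, with $C_{\bb_{s}}:=C_{\bc}\beta_{\bc}^{-2}$ and $\beta_{\bb_{s}}:=\beta_{\bc}C_{\bc}^{-2}$. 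Since $C^{\infty}(\OB_1)$ is dense in $H^{-s}(\calB_1)$, the continuity bound forces a unique bounded bilinear extension of $\bb_{s}$ to $H^{-s}(\calB_1)\times H^{-s}(\calB_1)$ with the same continuity constant, and passing to the limit along an approximating sequence in $\bb_{s}(f_{n},f_{n})\ge\beta_{\bb_{s}}\|f_{n}\|_{H^{-s}(\calB_1)}^{2}$ preserves ellipticity.

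The conceptual content is light --- it is the standard fact that the inverse of a continuous, coercive, symmetric form is again continuous and coercive on the dual space --- so I expect the only real work to lie in making the identity $\bb_{s}(\cdot,\cdot)=\bc(\mathrm{op}(G_{s})\cdot,\mathrm{op}(G_{s})\cdot)$ fully rigorous over the whole range $s>-\tfrac{n}{2}$ and reconciling it with the density step. For $s>0$ one may use $f,g\in C^{\infty}(\OB_1)$ directly via Theorem~\ref{thm:explicit}; for $s\le0$, however, the kernel $G_{s}$ from \eqref{extensionformula} is (mildly) hypersingular, so the finite-part integral defining $\bb_{s}$ must be matched carefully with the distributional action $\langle\mathrm{op}(G_{s})f,g\rangle_{\calB_1}$, and moreover $C_{0}^{\infty}(\calB_1)$ is no longer dense in $H^{-s}(\calB_1)$ once $-s\ge\tfrac12$, so one must extend the identity from $C_{0}^{\infty}(\calB_1)$ to $C^{\infty}(\OB_1)$ (using the explicit solution formula and the potential-theoretic results already cited). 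That extension across all $s$, rather than the functional-analytic core, is the main obstacle.
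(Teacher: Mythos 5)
Your proposal takes essentially the same route as the paper: the paper likewise justifies the lemma in one line by observing that $\bb_s$ is the inverse bilinear form of the continuous, elliptic form $\bc$ and then invoking density of $C^{\infty}(\OB_1)$ in $H^{-s}(\calB_1)$ to extend; you have simply made the transport-of-bounds computation explicit, via the identity $\bb_s(f,g)=\bc\bigl(\mathrm{op}(G_s)f,\mathrm{op}(G_s)g\bigr)$ and the resulting constants $C_{\bb_s}=C_{\bc}\beta_{\bc}^{-2}$, $\beta_{\bb_s}=\beta_{\bc}C_{\bc}^{-2}$. The caveat you flag at the end --- that the proposition supplies the weak identity only for $f\in C_0^{\infty}(\calB_1)$ while $\bb_s$ is defined on $C^{\infty}(\OB_1)$, and that these coincide densely in $H^{-s}(\calB_1)$ only for $-s<\tfrac12$ --- is a genuine point the paper leaves implicit, so your treatment is if anything slightly more careful than the source.
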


\cu{At the time of writing this article, such explicit solution formulas are 
known for very few specific domains} other than $\calB_1$: the full space 
$\mathbb{R}^n$ (from the Fourier transform of $|x|^{-s}$), and the half space 
$\mathbb{R}^n_+$ (by antisymmetrization).

\begin{remark} \label{reltoBEM}
\sj{Problem \eqref{e:FL} is closely related to boundary integral formulations.} 
\rcc{Let us consider the restriction operator $R_s:H^s(\mathbb{R}^n) \to H^s(\Omega)$.}
By identifying $\Omega \subset \mathbb{R}^n$ with the flat screen 
$\Gamma:=\Omega\times \{0\} \subset \mathbb{R}^{n+1}$, the hypersingular 
operator $\OW$ for the Laplace equation in the exterior domain $\mathbb{R}^{n+1} \setminus 
\overline{\Gamma}$ coincides with \rcc{$R_{-1/2}\circ\frac{1}{2}(-\Delta)^{1/2}$}, 
while the weakly singular operator $\OV$ coincides with \rcc{$R_{1/2}\circ\frac{1}{2}(-\Delta)^{-1/2}$}. \rcu{Indeed, $\OK$ and $\OK^\prime$ vanish on $\Gamma$. 
Therefore, $\OW$ is a multiple of the Dirichlet-to-Neumann operator \cite[Section~3.7]{ss} for 
 the Laplace equation in the exterior domain 
$\mathbb{R}^{n+1} \setminus \overline{\Gamma}$ \cite[Section~12.3]{gs}, 
as is \rcc{$R_{-1/2}\circ(-\Delta)^{1/2}$} \cite[Chapter~11, Equation 11.72]{gg}. 
Similarly, $\OV$ and \rcc{$R_{1/2}\circ(-\Delta)^{-1/2}$} 
are both multiples of 
the Neumann-to-Dirichlet operator.} 
In these cases, \eqref{e:green} and \eqref{e:ibp1} recover recent 
formulas 
for the inverses of $\OV$ and $\OW$, which have been of interest in boundary 
integral equations. Let us compute these simplifications for the relevant
values of $n$, $s$:

\noindent a) $n=2$, $s = \frac{1}{2}$: In this case $\int_0^{r} \frac{t^{s-1}}
{(t+1)^{n/2}} \dt 
= 2 \arctan(\sqrt{r})$, so that 
$$G_{1/2}(x,y) = \frac{1}{\pi^2} \vert x-y\vert^{-1}\arctan(\sqrt{r(x,y)})\ 
.$$

Note that $G_{1/2}$ coincides, up to a factor $2$, with the kernel of the 
operator $\mOV$ for the flat circular screen in $3d$ \cite{hju1}.


\noindent b) $n=1$, $s = \frac{1}{2}$: Here $\int_0^{r} \frac{t^{s-1}}{(t+1)^{
n/2}} \dt = 2 \mathrm{arsinh}(\sqrt{r})$, and hence $$G_{1/2}(x,y) = 2 k_{1,1/2}
\mathrm{arsinh}(\sqrt{r(x,y)})=2 k_{1,1/2} \ln\left(\sqrt{r(x,y)} + \sqrt{1+r(x,y)}\right)\ .$$ 

Writing $\omega(x)=\sqrt{
1-x^2}$, one obtains
\begin{align*}
\sqrt{r(x,y)} + \sqrt{1+r(x,y)} &= \frac{\omega(x)\omega(y)}{\vert x - y\vert} 
+ \sqrt{1 +  \frac{\omega(x)^2\omega(y)^2}{\vert x - y\vert^2}}
= \frac{\omega(x)\omega(y) + 1 - xy}{\vert x - y\vert}\\
&= \frac{\frac{1}{2} \left( (y-x)^2 + (\omega(x) + \omega(y))^2 \right)
}{\vert x - y\vert}.
\end{align*}

This agrees with the kernel of the operator $\mOV$ from \cite{hju0,jn} up to a 
factor $2$. Note that $k_{1,1/2} = \frac{1}{\pi}$, and see \cite{bucur} for a 
detailed discussion of the prefactor $k_{n,s}$ in the degenerate case $n=2s$.

\noindent c) $n=2$, $s=-\frac{1}{2}$: We obtain
$$ G_{-1/2}(x,y) = -\frac{1}{\pi^2}\left( \frac{1}{\sqrt{r(x,y)}|x-y|^3} + 
\frac{\arctan(\sqrt{r(x,y)})}{|x-y|^3}\right).$$

Again, $G_{-1/2}$ recovers, up to a factor $2$, the kernel of the operator 
$\mOW$ for the flat circular screen in $3d$ \cite{hju1}.

\noindent d) $n=1$, $s=-\frac{1}{2}$: In this case $\frac{n}{2s} \int_0^{r} 
\frac{t^s}{(1+t)^{1+n/2}} \dt = - \frac{2 \sqrt{r}}{\sqrt{1+r}}$,
so that $$G_{-1/2}(x,y) =  -  \frac{\sqrt{1+r(x,y)}}{\pi\vert x-y\vert^{2} 
\sqrt{r(x,y)}}=\frac{xy-1}{\pi|x-y|^2\omega(x)\omega(y)}\ .$$

$G_{-1/2}$ matches, up to a factor $-2$, the kernel of the operator $\mOW$ 
for the interval in $2d$, Formula (4.21) in \cite{jn}.\\
\end{remark}

\begin{remark}\label{rem:3}
For the numerical experiments\rcu{,} below the cases when $n=2$ and 
$s =\frac{1}{4}, \frac{7}{10}$, and $s = \frac{3}{4}$, are also relevant. 
There we obtain:
\begin{equation*}
\begin{split}
G_{1/4}(x,y) = & -2 k_{2,1/4} |x-y|^{-3/2} e^{3 i \pi/4} \left(\arctan(\sqrt[4]{
r}e^{i \pi/4})+\mathrm{artanh}(\sqrt[4]{r}e^{i \pi/4})\right),\\
G_{7/10}(x,y) = & -2 k_{2,7/10} |x-y|^{-3/5} \left( \arctan(\sqrt[10]{r}) 
+ e^{3 i \pi/10} \mathrm{artanh}(\sqrt[10]{r} e^{i \pi/10}) \right. \\
&\left. \quad + e^{9 i \pi/10} \mathrm{artanh}(\sqrt[10]{r} e^{3 i \pi/10})
+ e^{i \pi/10} \mathrm{artanh}(\sqrt[10]{r} e^{7 i \pi/10})\right. \\
&\left.\quad + e^{7 i \pi/10} \mathrm{artanh}(\sqrt[10]{r} e^{9i \pi/10}) \right),\\
G_{3/4}(x,y) = & \ 2 k_{2,3/4} |x-y|^{-1/2} e^{i \pi/4} \left( \arctan(\sqrt[4]{r}
e^{i \pi/4})-\mathrm{artanh}(\sqrt[4]{r}e^{i \pi/4})\right).
\end{split}
\end{equation*}
\end{remark}

\begin{remark}
Similar explicit formulas are available for other rational values of $s$, in 
terms of the Lerch Phi function \cite{w} when $n=2$ and in terms of elementary 
functions for special values of $s$.
\end{remark}

\section{Pseudodifferential Dirichlet Problems}\label{sec:bvp}

\rcc{In this Section, we introduce the family of problems we aim to solve.}
Let $A : H^s(\Gamma) \to H^{-s}(\Gamma)$ be a continuous operator of order 
$2s$ on an $n$-dimensional $C^{m,\sigma}$-regular Riemannian manifold $\Gamma$, 
$|s| \leq m+\sigma$. Examples include pseudodifferential operators of order $2s$ 
\cite[Chapter 7--8]{gg}, as well as their generalizations like the weakly or 
hypersingular \rcc{boundary integral}
operators on a manifold $\Gamma$ with edges or corners\sj{, or Riesz potentials 
in potential theory.} 

\cu{Recall the} Dirichlet problem for $A$ in a domain $\Omega \subset \Gamma$ 
\cu{from \eqref{mainproblem}}, which is formally given by
\begin{align}\label{mainbvp}
\begin{cases}
 Au = f \qquad & \text{in } \Omega,\\
u = 0 \qquad & \text{in } \Gamma\setminus \overline{\Omega}.
\end{cases}\nonumber
\end{align}

The weak formulation of Problem \eqref{mainproblem} involves the bilinear form 
$\ba_A$ on $C^\infty_0(\Omega)$, 
defined by 
\fj{\begin{equation}\label{e:bilin}
\ba_A(u,v) := \langle Au, v \rangle_\Gamma = \langle Au, v \rangle_\Omega\ .
\end{equation}}

From the mapping properties of $A$ and the fact that $\widetilde{H}^s(\Omega) 
\subset H^{s}(\Gamma)$, we note
\begin{equation*}
|\ba_A(u,v)| \leq C_A \|u\|_{\widetilde{H}^s(\Omega)}\|v\|_{\widetilde{H}^s(\Omega)}\ .
\end{equation*}

Thus, by continuity, $\ba_A$ extends to a bilinear form on $\widetilde{H}^s(\Omega)$. 
Then, for $f \in H^{-s}(\Omega)$, we obtain the following weak formulation of the 
homogeneous Dirichlet problem \eqref{mainproblem}: Find $u \in \widetilde{H}^s(
\Omega)$, such that
\begin{equation}\label{e:FLweakGamma}
\ba_A(u,v) = \langle f, v \rangle\ , \qquad \forall v \in \widetilde{H}^s(\Omega).
\end{equation}
\rcu{For simplicity, w}e assume that $\ba_A$ satisfies the inf-sup condition 
\begin{equation}\label{e:infsupGamma}
\mathrm{sup}_{v\in \widetilde{H}^s(\Omega)} \frac{\ba_A(u,v)}{\|v\|_{\widetilde{
H}^s(\Omega)}} \geq \cu{\beta}_A \|u\|_{\widetilde{H}^s(\Omega)}
\end{equation}
for all $\rcu{u} \in \widetilde{H}^s(\Omega)$, and some $\cu{\beta}_A>0$.
%

\begin{remark}
\cu{We remind the reader that} ellipticity of the bilinear form $\ba_A$ is 
sufficient for the inf-sup condition \eqref{e:infsupGamma} \cu{to hold}. 
Ellipticity of nonlocal Dirichlet problems is discussed 
in \cite{fkv}, for example. 

On the other hand, \cu{coercive pseudodifferential boundary problems, as the 
boundary integral formulations of the Helmholtz equation, also 
satisfy the inf-sup condition \eqref{e:infsupGamma}. Indeed,} G\aa rding 
inequalities are easily discussed when $A$ is a pseudodifferential operator of 
order $2s$ on $\Gamma$ with symbol $p_A(x,\xi)$ \cite{g2}. If $A$ satisfies 
$p_A(x,\xi) \geq c |\xi|^{2s}$ with $c>0$, then  for any $\tilde{s}<s$ the 
associated bilinear form satisfies a G\aa rding inequality on $\Gamma$, 
$$\langle Au, u \rangle_\Gamma \geq \tilde{c}_A \|u\|^2_{H^s(\Gamma)} - 
\cc{\tilde{C}_A}\|u\|^2_{H^{\tilde{s}}(\Gamma)}$$  
for some \sj{$\tilde{c}_A,\, \tilde{C}_A>0$}, see \cite[Theorem B.4]{gs}. By restriction to $u 
\in \widetilde{H}^s(\Omega)$, 
a G\aa rding inequality is satisfied by $\ba_A$, and the inf-sup condition 
\eqref{e:infsupGamma} then holds \rcu{on the complement of} a finite dimensional kernel.
\end{remark}

In the following we assume that $\overline{\Omega}$ is diffeomorphic to the 
unit ball $\overline{\calB}_1 \subset \mathbb{R}^n$ under a $C^{m,\sigma}
$-diffeomorphism $\chi: \calB_1 \to \Omega$.  For $|s| \leq m+\sigma$, \js{by 
the chain rule} it induces an isomorphism $\chi^* : H^{-s}(\Omega) 
\xrightarrow{\sim} H^{-s}(\calB_1)$  by composition with 
$\chi$. From $\chi^*$ and the bilinear form $\bb_{\rcu{s}}$ on $\calB_1$ defined by 
Boggio's kernel \eqref{e:green}, we obtain a bilinear form on $\Omega$:
\begin{equation}\label{e:b_mapped}
\bb_{\rcu{s},\chi}(u,v) := \bb_{\rcu{s}}(\chi^* u, \chi^* v).
\end{equation}
The proof of the \rcu{next} Lemma then follows from the continuity and 
\cu{ellipticity} of the bilinear form $\bb_{\rc{s}}$, \rcu{provided} in Lemma \ref{blemmaball}. 
\begin{lemma}\label{blemma}
\textcolor{black}{For $\mathrm{Re}(s)>-\frac{n}{2}$} \rcu{the bilinear form} $\bb_{\rcu{s},\chi}$ \rcu{defined in \eqref{e:b_mapped}} 
extends to a continuous and elliptic bilinear form $\bb_{\rcu{s},\chi}: H^{-s}(\Omega) 
\times H^{-s}(\Omega) \rightarrow \mathbb{R}$. More precisely, 
there exist $C_{\rcu{s},\chi}, \beta_{\rcu{s},\chi}>0$, such that
\begin{align*}
\bb_{\rcu{s}\rc{,}\chi}(u,v) \leq C_{\rcu{s},\chi} \|u\|_{H^{-s}(\Omega)}\|v\|_{H^{-s}
(\Omega)}\ ,\quad
\bb_{\rcu{s}\rc{,}\chi}(u,u) \geq \beta_{\rcu{s},\chi} \|u\|^2_{H^{-s}(\Omega)}.
\end{align*}
\end{lemma}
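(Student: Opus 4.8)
The plan is to transport the continuity and ellipticity estimates from Lemma~\ref{blemmaball} on $\calB_1$ to $\Omega$ via the diffeomorphism $\chi$, using only that $\chi^*$ is a Sobolev-space isomorphism in the relevant range of orders. First I would recall that, since $\overline{\Omega}$ is diffeomorphic to $\overline{\calB}_1$ under the $C^{m,\sigma}$-diffeomorphism $\chi:\calB_1\to\Omega$, composition with $\chi$ gives the bounded linear isomorphism $\chi^*:H^{-s}(\Omega)\xrightarrow{\sim}H^{-s}(\calB_1)$ for $|s|\le m+\sigma$, as already asserted in the paragraph preceding the statement (by the chain rule for Sobolev spaces under changes of variable of sufficient regularity; see e.g.~\cite[Section~9]{t} for the invariance of $H^{-s}$ under such coordinate changes when $|s|\le 1$, and the general case for higher regularity). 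In particular there are constants $0<c_\chi\le C_\chi$ with
\begin{equation*}
c_\chi\,\|w\|_{H^{-s}(\Omega)}\le \|\chi^* w\|_{H^{-s}(\calB_1)}\le C_\chi\,\|w\|_{H^{-s}(\Omega)}\qquad\forall\,w\in H^{-s}(\Omega).
\end{equation*}

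Next I would simply substitute $\chi^* u,\chi^* v$ into the two inequalities of Lemma~\ref{blemmaball}. For continuity,
\begin{equation*}
\bb_{s,\chi}(u,v)=\bb_s(\chi^* u,\chi^* v)\le C_{\bb_s}\,\|\chi^* u\|_{H^{-s}(\calB_1)}\,\|\chi^* v\|_{H^{-s}(\calB_1)}\le C_{\bb_s}C_\chi^2\,\|u\|_{H^{-s}(\Omega)}\,\|v\|_{H^{-s}(\Omega)},
\end{equation*}
so one may take $C_{s,\chi}:=C_{\bb_s}C_\chi^2$. For ellipticity,
\begin{equation*}
\bb_{s,\chi}(u,u)=\bb_s(\chi^* u,\chi^* u)\ge \beta_{\bb_s}\,\|\chi^* u\|_{H^{-s}(\calB_1)}^2\ge \beta_{\bb_s}c_\chi^2\,\|u\|_{H^{-s}(\Omega)}^2,
\end{equation*}
so $\beta_{s,\chi}:=\beta_{\bb_s}c_\chi^2$ works. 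Since $\chi^*$ is a bijection, the bilinear form $\bb_{s,\chi}$ is genuinely defined on all of $H^{-s}(\Omega)\times H^{-s}(\Omega)$ and inherits bilinearity and (by the first estimate) boundedness, hence it is the claimed continuous extension of the form originally given on smooth functions via \eqref{e:b_mapped}; uniqueness of this extension follows from density of $C^\infty(\overline\Omega)$ in $H^{-s}(\Omega)$, exactly as in the passage from $\bb_s$ on $C^\infty(\overline{\calB}_1)$ to Lemma~\ref{blemmaball}.

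The only genuinely non-routine point — and thus the main obstacle, though a mild one — is justifying that $\chi^*$ is a topological isomorphism of $H^{-s}$ precisely on the range $|s|\le m+\sigma$ determined by the regularity $C^{m,\sigma}$ of $\chi$, and in particular for the possibly negative and non-integer values of $s$ relevant here (note $\mathrm{Re}(s)>-\tfrac{n}{2}$ must be compatible with $|s|\le m+\sigma$, which is part of the standing hypotheses on $A$ and $\Gamma$). For $|s|\le 1$ and Lipschitz changes of coordinates this is classical \cite[Section~9]{t}; for higher-order Hölder regularity it follows by interpolation and duality from the corresponding statement on $H^{k}$ for integers $k\le m$, combined with the fractional case via the Aronszajn--Slobodeckij seminorm and the bi-Lipschitz (indeed $C^{m,\sigma}$) control of $|\chi(x)-\chi(y)|$ against $|x-y|$. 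I would state this as a lemma or cite it, and otherwise the proof is the two-line substitution above together with the density argument.
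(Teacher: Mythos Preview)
Your proof is correct and takes essentially the same approach as the paper, which simply remarks that the result follows from the continuity and ellipticity of $\bb_s$ in Lemma~\ref{blemmaball} together with the fact that $\chi^*:H^{-s}(\Omega)\to H^{-s}(\calB_1)$ is an isomorphism for $|s|\le m+\sigma$. Your write-up just makes explicit the two-sided norm equivalence for $\chi^*$ and the resulting constants $C_{s,\chi}=C_{\bb_s}C_\chi^2$, $\beta_{s,\chi}=\beta_{\bb_s}c_\chi^2$, which is exactly what the paper leaves implicit.
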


Given its mapping and pseudospectral properties, the operator $B_{\rcu{s},\chi} 
: H^{-s}(\Omega)\xrightarrow{\sim} \widetilde{H}^s(\Omega)$ associated to $\bb_{
\rcu{s},\chi}$ 
will be used to build a suitable preconditioner for 
the homogeneous Dirichlet problem \eqref{e:FLweakGamma}.

\rcu{
\begin{remark}
 If \eqref{e:infsupGamma} does not hold on $\widetilde{H}(\Omega)$ but on the 
 complement of a finite dimensional kernel, one can still use the operator 
 $B_{\rcu{s},\chi}$ 
 to build a suitable operator preconditioner. We refer to \cite{sw} for a 
 detailed discussion.
\end{remark}
}
%

\section{Preconditioning and Discretization}\label{sec:precond}

As we saw in the previous section, the bilinear forms $\ba_A$ and $\bb_{\rcu{s}
,\chi}$ are continuous and \cu{satisfy inf-sup conditions} in their corresponding 
spaces. \cu{Moreover, their} associated operators $\mathcal{A}$ and $B_{\rcu{s},
\chi}$ are isomorphisms which map in opposite directions. Their composition $B_{
\rcu{s},\chi}
\mathcal{A} \, : \widetilde{H}^s(\Omega) \to \widetilde{H}^s(\Omega)$ therefore 
is an endomorphism. 

In this section, we discuss the missing piece to properly apply the operator 
preconditioning theory: We look for adequate discretizations such that the 
composition $B_{\rcu{s},\chi}\mathcal{A}$ remains well-conditioned in the 
discrete setting, and thereby defines an optimal operator 
preconditioner. We follow the approach from \cite[Section 1.2.2]{CHR02}, \cite{h}.

Define the bilinear form $\bd : \widetilde{H}^s(\Omega) \times H^{-s}(\Omega)
 \to \mathbb{R}$ as
\fj{$$\bd(v, \varphi) := \dual{v}{\varphi}_{\Omega}, \quad v \in \widetilde{H}^s(\Omega), 
\varphi \in H^{-s}(\Omega),$$}
where $\dual{\cdot}{\cdot}_\Omega$ denotes \textcolor{black}{the duality pairing from Section \ref{sec:notation}.}

\rcu{Let $\lbrace\mathcal{T}_h\rbrace_{h}$ be a family of triangulations of 
$\Omega$, whose members are labelled by their mesh width $h$.}
Let $\widetilde{\mathbb{V}}_{h} \subset \widetilde{H}^s(\Omega)$ 
and $\mathbb{W}_{h} \subset H^{-s}(\Omega)$ be conforming finite element spaces 
\rcu{associated to $\mathcal{T}_h$}.
We assume that the restrictions of the bilinear forms $\ba_A$ and $\bd$ to these 
finite dimensional spaces satisfy an inf-sup condition uniformly in $h$:
\begin{equation}\label{eq:infsupAh}
\underset{v_h\in \widetilde{\mathbb{V}}_{h}}{\mathrm{sup}} \frac{\ba_A(u_h,v_h)
}{\|v_h\|_{\widetilde{H}^s(\Omega)}} \geq \cu{\beta_A} \|u_h\|_{\widetilde{H}^s
(\Omega)}, \qquad\text{for all }u_h \in \widetilde{\mathbb{V}}_{h},
\end{equation}
\begin{equation}\label{eq:infsupdh}
\underset{\varphi_h\in\mathbb{W}_{h}}{\mathrm{sup}} \dfrac{\bd(v_h, \varphi_h)}
{\|\varphi_h\|_{H^{-s}(\Omega)}} \geq \cu{\beta_{\bd}} \|v_h\|_{\widetilde{H}^s
(\Omega)}, \qquad \text{for all } v_h \in \widetilde{\mathbb{V}}_{h},
\end{equation}
with $\cu{\beta_A, \beta_{\bd}}>0$ independent of $h$. Due to ellipticity, an 
analogous inf-sup condition for $\bb_{\rcu{s},\chi}$ holds by Lemma \ref{blemma}.

Then, for any sets of bases 
$$\widetilde{\mathbb{V}}_{h} = \text{span }\lbrace \psi_i \rbrace_{i=1}^N \:
\text{ and } \: \mathbb{W}_{h} = \text{span }\lbrace \phi_j \rbrace_{j=1}^M$$ 
such that 
\begin{equation}
 \label{eq:dim}
N := \dim \widetilde{\mathbb{V}}_{h} = \dim \mathbb{W}_h =:M,
\end{equation}
the Galerkin matrices
\begin{equation*}
\mathbf{A}_{i,j} := \ba_A(\psi_i,\psi_j), \qquad
\mathbf{B}_{i,j} := \bb_{\rcu{s},\chi}(\phi_i,\phi_j), \qquad
\mathbf{D}_{i,j} := \bd(\psi_i,\phi_j),
\end{equation*}
satisfy the following bound for the spectral condition number
\begin{equation}\label{mainresult}
 \kappa\left( \mathbf{D}^{-1} \mathbf{B} \mathbf{D}^{-T} \mathbf{A}\right) \leq
 \dfrac{C_{\rcu{s},\chi} C_A \Vert \bd \Vert^2}{\beta_A \beta_{\rcu{s},\chi} \beta_{\bd}^2}.
\end{equation}
Here $\Vert \bd \Vert$ is the operator norm of $\bd$ \cite[Theorem~2.1]{h}.

\textcolor{black}{We propose the preconditioner
\begin{equation}\label{precondformula}
\mathbf{P}:= \mathbf{D}^{-1} \mathbf{B} 
\mathbf{D}^{-T}\ ,
\end{equation} 
\noindent \cu{and point out that  }
we only need to choose $\widetilde{\mathbb{V}}_{h}$ and $\mathbb{W}_{h}$ such that 
\eqref{eq:infsupdh} and \eqref{eq:dim} are guaranteed.\\
As a consequence of the general framework for operator preconditioning 
\cite[\rcu{Theorem~2.1}]{h} we obtain:
\begin{theorem}\label{thm:main}
 Let $\mathbf{A}$ be the Galerkin matrix of $A$ and
$\mathbf{P}$ the preconditioner in \eqref{precondformula}. Then there exists a
constant \js{$C>0$} independent of $h$ and such that for any discretization 
satisfying \eqref{eq:infsupAh},
\eqref{eq:infsupdh} and \eqref{eq:dim} the spectral condition number
$\kappa\left(  \mathbf{P}  \mathbf{A}\right)$ is bounded by \js{$C$}.
\end{theorem}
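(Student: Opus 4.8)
The plan is to recognize that Theorem~\ref{thm:main} is essentially a direct application of the abstract operator preconditioning framework of \cite[Theorem~2.1]{h}, so the proof amounts to verifying that all the hypotheses of that abstract result are in place and then reading off the conclusion. First I would recall the abstract setup: we have a pair of Hilbert spaces in duality, here $\widetilde{H}^s(\Omega)$ and $H^{-s}(\Omega)$, and three bilinear forms $\ba_A$, $\bb_{s,\chi}$ and $\bd$. By the continuity bound for $\ba_A$ and the inf-sup assumption \eqref{e:infsupGamma}, together with the continuity/ellipticity of $\bb_{s,\chi}$ from Lemma~\ref{blemma} and the obvious continuity and inf-sup property of the duality pairing $\bd$, the continuous-level hypotheses of \cite[Theorem~2.1]{h} are met. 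At the discrete level, the hypotheses \eqref{eq:infsupAh} and \eqref{eq:infsupdh} supply the uniform-in-$h$ discrete inf-sup conditions for $\ba_A$ and $\bd$ on the conforming pair $\widetilde{\mathbb V}_h \subset \widetilde{H}^s(\Omega)$, $\mathbb W_h \subset H^{-s}(\Omega)$; the discrete inf-sup condition for $\bb_{s,\chi}$ is automatic from its ellipticity (Lemma~\ref{blemma}) on the conforming subspace $\mathbb W_h$, with inf-sup constant bounded below by $\beta_{s,\chi}$; and \eqref{eq:dim} matches the dimensions $N = M$ so that the matrix $\mathbf D$ is square and, by the discrete inf-sup condition \eqref{eq:infsupdh}, invertible.

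Next I would assemble the Galerkin matrices $\mathbf A$, $\mathbf B$, $\mathbf D$ exactly as in the statement and invoke \cite[Theorem~2.1]{h} (equivalently, the estimate \eqref{mainresult} stated just above the theorem) to conclude that
\begin{equation*}
\kappa\left(\mathbf D^{-1}\mathbf B\mathbf D^{-T}\mathbf A\right)\ \le\ \frac{C_{s,\chi}\,C_A\,\Vert\bd\Vert^2}{\beta_A\,\beta_{s,\chi}\,\beta_{\bd}^2}\ .
\end{equation*}
Since $\mathbf P = \mathbf D^{-1}\mathbf B\mathbf D^{-T}$ by definition \eqref{precondformula}, the left-hand side is precisely $\kappa(\mathbf P\mathbf A)$. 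The right-hand side involves only the continuity and inf-sup/ellipticity constants $C_A$, $\beta_A$ (intrinsic to $A$), $C_{s,\chi}$, $\beta_{s,\chi}$ (intrinsic to $B_{s,\chi}$, hence to $s$, $n$ and the diffeomorphism $\chi$), $\Vert\bd\Vert$ (the norm of the duality pairing, which is $1$), and $\beta_{\bd}$; by hypothesis all of these are independent of $h$, so setting $C := C_{s,\chi}C_A\Vert\bd\Vert^2/(\beta_A\beta_{s,\chi}\beta_{\bd}^2)$ gives the desired $h$-independent bound.

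There is really no substantive obstacle here: the theorem is a corollary, and the only thing one must be slightly careful about is bookkeeping — confirming that the discrete inf-sup hypothesis for the preconditioning bilinear form $\bb_{s,\chi}$ is not an extra assumption but follows for free from its ellipticity on the conforming subspace (so that one truly only needs to impose \eqref{eq:infsupAh}, \eqref{eq:infsupdh} and \eqref{eq:dim}), and that the conformity $\widetilde{\mathbb V}_h\subset\widetilde{H}^s(\Omega)$, $\mathbb W_h\subset H^{-s}(\Omega)$ makes the various forms well defined on the discrete spaces. The genuine mathematical content of the paper lies not in this theorem but in Lemma~\ref{blemma} (which makes $\bb_{s,\chi}$ an admissible preconditioning form via Boggio's formula) and in the later verification, for concrete graded and adaptively refined meshes, that dual-mesh discretizations actually satisfy \eqref{eq:infsupdh} and \eqref{eq:dim}; the statement above merely packages these into the operator preconditioning machinery.
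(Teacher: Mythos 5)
Your proposal is correct and follows essentially the same route as the paper: the paper states the bound \eqref{mainresult} directly from the abstract operator preconditioning result \cite[Theorem~2.1]{h} and presents Theorem~\ref{thm:main} as an immediate consequence, exactly as you do, with the discrete inf-sup for $\bb_{s,\chi}$ supplied for free by its ellipticity (Lemma~\ref{blemma}) on the conforming subspace. Your write-up merely makes explicit the hypothesis-checking that the paper treats as self-evident, so there is no substantive difference.
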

In the following, we illustrate how these assumptions can be met on common 
discretizations by \textit{triangular} meshes. }

\subsection{Discretization}
\label{ssec:discretization}

\rcu{
Let us begin by motivating the dicussion and reminding the reader that solutions 
to \eqref{mainproblem} feature edge singularities, and can also have corner 
singularities when {$\Omega$} is not smooth. \textcolor{black}{These singularities are analogous to 
those for the fractional Laplacian for $s\in (0,1)$: Even when $\partial \Omega$ is smooth, \cite[Theorem 4]{g3} shows that the solution $u$ to \eqref{mainproblem} behaves like $u(x) \sim \mathrm{dist}(x,\partial \Omega)^s$ in a neighborhood of $\partial \Omega$. Similarly, near a corner $C$ of a polygon $u(x) \sim \mathrm{dist}(x,C)^\lambda$, where the exponent $\lambda$ depends on $s$ and the geometry of the corner \cite{hgepsjs}.
When discretizing with low-order finite 
elements, these singularities are often resolved by local refinements} to recover optimal convergence rates.

Consequently, it makes sense that preconditioners devised for these kind of 
problems are required to work on meshes which are not quasi-uniform. While other preconditioners 
have been extensively studied on locally refined meshes \cite{am1,fmpp,dirk,mm}, 
this analysis is still incomplete for operator preconditioning. 

Usually, local refinements are implemented via two strategies: 
\begin{enumerate}
 \item Using \textit{a priori} error convergence knowledge to choose suitable 
 algebraically or geometrically graded meshes;
\item Employing \textit{a posteriori} error estimates to implement adaptively 
refined meshes. 
\end{enumerate}

We remark that both approaches are broadly used in the numerical solution of 
PDEs and relevant for the problems this article is interested in. Moreover, 
from a practical point of view, and since we aim for a general preconditioner 
that can be used for a large range of problems of the form \eqref{mainproblem}, 
we take pains to ensure that the proposed preconditioner works for both 
refinement strategies. In order to achieve this, we exploit that 
Theorem~\ref{thm:main} tells us exactly which conditions we need to verify to 
make this happen.
}

\fj{In the following we restrict to the 2-dimensional case, $n=2$.} For simplicity of notation,  assume that $\Gamma$ is a polyhedral surface and 
$\Omega$ has a polygonal boundary. Let \rcu{$\lbrace\mathcal{T}_h\rbrace_{h}$} 
be a family of triangulations of $\Omega$, 
and let $\mathbb{S}^p(\mathcal{T}_h)$ the finite element spaces 
consisting of piecewise polynomial functions of degree $p$ on a mesh $\mathcal{
T}_h$ (continuous for $p\geq 1$). We choose $\widetilde{\mathbb{V}}_{h}= 
\mathbb{S}^p(\mathcal{T}_h)\cap\widetilde{H}^{s}(\Omega)$.

When $\vert s \vert \leq 1$, the requirements \eqref{eq:infsupdh} and 
\eqref{eq:dim} are known to be satisfied for a wide class of discretizations 
based on dual meshes $\rcu{\check{\mathcal{T}}}_h$ of $\mathcal{T}_h$, with 
$\mathbb{W}_{h}=\mathbb{S}^q(\rcu{\check{\mathcal{T}}}_h)$ \rcu{and $q$ suitably 
chosen depending on $p$ \rc{\cite[Chapter 2]{s2}}. A typical example of the possible 
combinations of degrees $p$ and $q$ would be $(p,q)=(1,0)$ for $0<s \leq 1$.} 
We note that \rcu{the results for such \textit{primal-dual discretizations}
include quasi-uniform meshes and a broad family of non-uniform meshes 
generated via the first local refinement strategy described above.} 
\rcu{Indeed, when $\vert s \vert\leq 1$ and $n=1$, one can prove that \eqref{eq:infsupdh} holds on shape regular algebraically $2$-graded 
meshes, and shape regular geometrically $r$-graded meshes with some 
conditions on the grading parameter $r$ following the arguments
from \cite[Section~4.3]{hju0}. For higher dimensions, one typically verifies 
this numerically.}.
\rcu{However, the stability requirement \eqref{eq:infsupdh} has not been 
shown for meshes generated via the second local refinement strategy. We 
dedicate the next subsection to address this question.}

On the other hand, recent work by \cite{svv,svv2} offers an alternative 
yet suitable construction for $\widetilde{\mathbb{V}}_{h}$ and $\mathbb{W}_h$ 
which avoids the dual mesh approach. It works for $p=0,1$ and also higher order
polynomials. 
Furthermore, it can also tackle non-uniform meshes with the advantage that it 
requires no mesh conditions besides the so-called K-mesh property.

For $s>1$, there have been no results to the best of the 
authors' knowledge.

\subsection{\rcu{Stability of primal-dual discretization on locally refined meshes}}
\label{sec:adaptive}

\rcu{
In this section, we prove for the first time that operator preconditioning with 
standard primal-dual finite element discretization also leads to bounded 
condition numbers for adaptively refined meshes. 

We believe this is an interesting result on its own account. On the one hand, 
one may argue that adaptive refinements are particularly relevant when thinking 
on a general preconditioning strategy, as they can be implemented with the same 
generality as the preconditioner itself (i.e., no a priori information about the 
geometry, like smoothness or symmetry, is needed to deliver an optimal output). 
On the other hand, this result also implies the $H^1$--stability of a generalized 
$L^2$--projection, a fundamental question of independent interest \cite{by,cc,s}.

As an extensive presentation of adaptivity is outside the purposes of this 
article, 
we focus this section on key ideas and keep presentation as concise as possible to 
communicate this novel and relevant extension to a general audience. Nevertheless, 
the interested reader may find the technical details and proofs in 
Appendix~\ref{app:Adaptivity}.

As a proof of concept, we address this question for operator preconditioning 
using classical primal-dual discretization $\widetilde{\mathbb{V}}_{h}= \mathbb{
S}^1(\mathcal{T}_h)\cap\widetilde{H}^{s}(\Omega)\subset\widetilde{H}^{s}(\Omega)$ 
and $\mathbb{W}_{h}=\mathbb{S}^0(\check{\mathcal{T}}_h)\subset {H}^{-s}(\Omega)$ 
for $0\leq s\leq 1$, as introduced in subsection~\ref{ssec:discretization}
\footnote{It is worth pointing out that the same arguments apply to show 
stability for the case $\mathbb{V}_{h}= \mathbb{S}^1(\mathcal{T}_h)\subset{H}^{
s}(\Omega)$ and $\widetilde{\mathbb{W}}_{h}=\mathbb{S}^0(\check{\mathcal{T}}_h
)\subset \widetilde{H}^{-s}(\Omega)$ for $0<s\leq 1$. By duality arguments, this 
will also imply \eqref{eq:infsupdh} for the combination $\widetilde{\mathbb{V}
}_{h}= \mathbb{S}^0(\mathcal{T}_h)\cap\widetilde{H}^{s}(\Omega)\subset\widetilde{
H}^{s}(\Omega)$ and $\mathbb{W}_{h}=\mathbb{S}^1(\check{\mathcal{T}}_h)\subset
H^{-s}(\Omega)$ for $-1\leq s\leq0$.}.
By construction of $\widetilde{\mathbb{V}}_{h}$ and $\mathbb{W}_{h}$, 
\eqref{eq:infsupAh} and \eqref{eq:dim} hold. Therefore, we only need to show 
that the stability requirement \eqref{eq:infsupdh} is satisfied with the chosen 
discretizations to be able to use Theorem~\ref{thm:main} on adaptively refined 
meshes. In order to do this, we briefly introduce some 
general notions about adaptivity.
}

Given an initial triangulation $\mathcal{T}^{(0)}_h$, the adaptive algorithm 
generates a sequence $\mathcal{T}_h^{(\ell)}$ of 
triangulations based \cu{on} error indicators $\eta^{(\ell)}(\tau),\; \tau \in 
\mathcal{T}_h^{(\ell)}$, a refinement criterion and a refinement rule, by 
following the established sequence of steps:
\begin{equation*}
\mathrm{SOLVE}\to\mathrm{ESTIMATE}\to\mathrm{MARK}\to\mathrm{REFINE}.
\end{equation*}
\rcu{This procedure is summarized in the following algorithm}:
\begin{alg}\label{alg:Adaptive}
${\;}$\\
Inputs: Triangulation $\mathcal{T}_h^{(0)}$, refinement parameter $\theta \in (0,
1)$, tolerance $\varepsilon >0$, data $f$.\\
For $\ell = 0, 1, 2 , \dots$
\begin{enumerate}
\item Solve problem \eqref{mainproblem}, for $u_h$ on $\mathcal{T}_h^{(\ell)}$.
\item Compute error indicators $\eta^{(\ell)}(\tau)$ in each triangle $\tau \in 
\mathcal{T}_h^{(\ell)}$.
\item Stop if $\sum_{k} \eta^{(\ell)}(\tau_k) \leq \varepsilon$.
\item Find $\eta^{(\ell)}_{max} = \max_{\tau} \eta^{(\ell)}(\tau).$
\item Mark all $\tau$ with $\eta^{(\ell)}(\tau) > \theta \eta^{(\ell)}_{max}$.
\item Refine each marked triangle to obtain new mesh $\mathcal{T}_h^{(\ell+1)}$.
\end{enumerate}
end\\
Output: Solution $u_{h}$.
\end{alg}

\rcu{Let us assume that we start with an initial triangulation $\mathcal{T}_h^{(0
)}$ such that \eqref{eq:infsupdh} holds for our choice of $\widetilde{\mathbb{V
}}_{h}$ and $\mathbb{W}_{h}$. Clearly, step 6 is the only stage in 
Algorithm~\ref{alg:Adaptive} where one could alter \eqref{eq:infsupdh} for 
subsequent refinements. Therefore, this is the part one has to consider carefully. For the sake of illustration, in this paper we will show how to 
do with this for the \textit{red-green refinement} (see Appendix~\ref{app:Adaptivity} for details).  }\\

\rcu{\begin{lemma}\label{lem:Adaptive}
Let $\mathcal{T}_h^{(0)}$ be a shape regular \rcu{and locally quasi-uniform} 
initial triangulation of $\Omega$. We consider a family of meshes $\Xi:=
\lbrace \mathcal{T}_h^{(\ell)}\rbrace_{\ell \in \mathbb{N}}$ generated from 
$\mathcal{T}_h^{(0)}$ by the adaptive refinement described in 
Algorithm~\ref{alg:Adaptive} \jak{using red-green refinement}. Let $0\leq s\leq 
1$. On each level $\ell \in \mathbb{N}$, we choose $\widetilde{\mathbb{V}}_{
\ell}= \mathbb{S}^1(\mathcal{T}^{(\ell)}_h)\cap\widetilde{H}^{s}(\Omega)$ and 
$\mathbb{W}_{\ell}=\mathbb{S}^0(\check{\mathcal{T}_h}{(\ell)})$. 

Then, \textit{under some mild conditions}\footnote{condition \eqref{eq:c0Cond} in Appendix~\ref{app:Adaptivity}.} on the local quasi-uniformity constant of 
$\mathcal{T}^{(0)}_h$, the following inf-sup constant holds 
\begin{equation}
\underset{\varphi_h\in\mathbb{W}_{\ell}}{\mathrm{sup}} \dfrac{\bd(v_h, \varphi_h)
}{\|\varphi_h\|_{H^{-s}(\Omega)}} \geq \beta_{\bd} \|v_h\|_{\widetilde{H}^s
(\Omega)}, \qquad \text{for all } v_h \in \widetilde{\mathbb{V}}_{\ell},
\end{equation}

for all $\mathcal{T}_h^{(\ell)}\in \Xi$, and with $\beta_{\bd}$ independent of $\ell \in 
\mathbb{N}$. 
\end{lemma}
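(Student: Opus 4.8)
The plan is to reduce the inf-sup condition \eqref{eq:infsupdh} for $\bd$ on the pair $(\widetilde{\mathbb{V}}_\ell, \mathbb{W}_\ell)$ to the $H^1(\Omega)$-stability of a generalized (Petrov--Galerkin) $L^2(\Omega)$-projection $Q_\ell : \widetilde{H}^s(\Omega) \to \mathbb{W}_\ell$ defined by $\dual{Q_\ell v}{\varphi_h}_\Omega = \dual{v}{\varphi_h}_\Omega$ for all $\varphi_h \in \mathbb{W}_\ell$. This projection is well-defined precisely because $\dim \widetilde{\mathbb{V}}_\ell = \dim \mathbb{W}_\ell$ together with the algebraic compatibility of piecewise-linear functions on $\mathcal{T}^{(\ell)}_h$ and piecewise-constant functions on the dual mesh $\check{\mathcal{T}}_h^{(\ell)}$ (each dual cell carries exactly one vertex of the primal mesh). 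The standard argument, following \cite[Section 4]{hju0} and the projection-stability literature \cite{by,cc,s}, is that if $\|Q_\ell v_h\|_{H^{-s}(\Omega)} \lesssim \|v_h\|_{\widetilde H^s(\Omega)}$ uniformly in $\ell$, then choosing $\varphi_h = Q_\ell v_h$ in the supremum yields $\bd(v_h, \varphi_h) = \|Q_\ell v_h\|_{L^2(\Omega)}^2 \gtrsim \|v_h\|_{\widetilde H^s(\Omega)}^2$ after an interpolation/duality step, giving $\beta_{\bd}$ depending only on the stability constant of $Q_\ell$. So the whole problem collapses to: \emph{bound the operator norm of $Q_\ell$ on $H^1(\Omega)$ (hence, by interpolation with $L^2$, on $H^s(\Omega)$ for $0 \le s \le 1$), uniformly over the adaptively generated family $\Xi$.}

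\textbf{Key steps.} First I would set up $Q_\ell$ and record the two classical ingredients for projection stability: (i) a local $L^2$-stability bound $\|Q_\ell v\|_{L^2(\tau)} \lesssim \|v\|_{L^2(\omega_\tau)}$ on a fixed-size patch $\omega_\tau$, which follows from a scaling argument plus invertibility of the local mass-type matrix pairing $\mathbb{S}^1$ against $\mathbb{S}^0$ on the dual mesh, with constants controlled by shape-regularity and the local quasi-uniformity constant; and (ii) an approximation property $\|v - Q_\ell v\|_{L^2(\tau)} \lesssim h_\tau \|\nabla v\|_{L^2(\omega_\tau)}$, obtained by subtracting a constant and using (i). Combining (i)--(ii) with an inverse inequality on $\mathbb{S}^0(\check{\mathcal{T}}_h^{(\ell)})$ gives the $H^1$-stability $\|\nabla Q_\ell v\|_{L^2(\Omega)} \lesssim \|\nabla v\|_{L^2(\Omega)}$, \emph{provided} the mesh is locally quasi-uniform with a constant that does not degenerate. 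Second, and this is the crux, I would prove that red-green refinement starting from a shape-regular, locally quasi-uniform $\mathcal{T}^{(0)}_h$ preserves these two properties uniformly: red-green (i.e., newest-vertex-bisection-free, bisection-by-similarity) refinement produces only finitely many triangle similarity classes, so shape-regularity is uniform; and the local quasi-uniformity ratio $\max_{\tau' \sim \tau} h_{\tau'}/h_\tau$ over neighboring cells after refinement stays bounded by a function of the ratio on $\mathcal{T}^{(0)}_h$ — this is where the ``mild condition'' \eqref{eq:c0Cond} on the initial local-quasi-uniformity constant enters, ensuring the bound does not blow up under the green-closure step. Third, I would transfer the $\check{\mathcal{T}}_h$-side estimates to the dual mesh, noting that dual cells inherit shape-regularity and size comparability from the primal mesh, and assemble the pieces into $\beta_{\bd}$.

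\textbf{Main obstacle.} The hard part is not the projection-stability machinery — that is essentially \cite[Section 4.3]{hju0} combined with \cite{cc,s} — but controlling the \emph{propagation of the local quasi-uniformity constant under adaptive red-green refinement}. One refinement step refines marked triangles (red) and then performs green closure on their neighbors to restore conformity; the green-closure triangles can be more distorted and, crucially, can sit next to an unrefined coarse triangle, so the neighbor-to-neighbor size ratio could in principle grow. I would argue that green closure is applied only once before the next red refinement (standard ``remove green edges first'' bookkeeping), so green triangles are never themselves green-refined, which caps the distortion; and I would track the worst-case ratio across an edge shared by a refined and an unrefined triangle, showing it is multiplied by at most a fixed factor ($\le 2$) per level but remains bounded because the coarse neighbor will be refined in a bounded number of subsequent steps whenever the ratio would otherwise exceed the threshold — this monotone-control argument, made rigorous, requires the initial constant to satisfy \eqref{eq:c0Cond}. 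Routine scaling and the explicit mass-matrix computations I would defer to Appendix~\ref{app:Adaptivity}.
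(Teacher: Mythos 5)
Your overall plan---verify uniform mesh-regularity constants under red--green refinement and deduce the discrete inf-sup condition---is in the right neighborhood, but it misses the structure of the actual argument in a way that leaves a real gap.

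The paper does not prove projection stability from scratch. It invokes Steinbach's theorem (\cite[Theorems~2.1 and 2.2]{s}, stated here as Theorem~\ref{thm:dinfsupOlaf}): the inf-sup condition for the primal--dual pair $(\mathbb{S}^1(\mathcal{T}_h)\cap\widetilde H^s, \mathbb{S}^0(\check{\mathcal{T}}_h))$ holds provided three mesh conditions are satisfied: shape regularity $(C1)$, local quasi-uniformity $(C2)$, and a third, genuinely $s$-dependent condition $(C3)$ requiring
\[
\frac{51}{7}-\sqrt{\sum_{j\in J(m)}\hat h_j^{2s}\sum_{j\in J(m)}\hat h_j^{-2s}}\geq c_0>0.
\]
The entire proof of Lemma~\ref{lem:Adaptive} is then an induction on $\ell$ showing $(C1)$--$(C3)$ persist with constants independent of $\ell$. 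Your proposal never mentions $(C3)$ at all. This matters because you misattribute the source of the restriction \eqref{eq:c0Cond}. You argue it is needed to stop the local quasi-uniformity ratio from blowing up under green closure. In fact, $(C2)$ is handled essentially for free: the $1$-irregularity rule bounds the level difference of neighbouring triangles by one, giving $c_L^{(\ell)}\le 2c_L^{(0)}$ outright, and $(C1)$ is preserved because green edges are removed before the next red refinement (green triangles are never green-refined), so shape regularity degrades by at most the fixed factor $1/\sqrt 2$. The constraint \eqref{eq:c0Cond} is imposed precisely so that the combined ratio $\hat h_{m_1}/\hat h_{m_3}\le (2c_L^{(0)})^2$ keeps $(C3)$ satisfied, i.e.\ so that $(\hat h_{m_1}/\hat h_{m_3})^{2|s|}<1129/49$. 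Your ``monotone-control argument'' addresses a non-issue and leaves the actual quantitative obstacle untouched.

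Two further points. First, your $Q_\ell$ is not a Petrov--Galerkin projection: testing $\dual{Q_\ell v}{\varphi_h}=\dual{v}{\varphi_h}$ for all $\varphi_h\in\mathbb W_\ell$ with $Q_\ell v\in\mathbb W_\ell$ is just the $L^2$-orthogonal projection onto $\mathbb W_\ell$. The relevant object in the paper is $\widetilde Q_{h,\mathcal T_h}\colon L^2\to\widetilde{\mathbb V}_h$ tested against $\mathbb W_h$ (see \eqref{eq:wQh}), whose $\widetilde H^s$-stability is a \emph{consequence} of the lemma (Corollary~\ref{cor:stab1}), not an independent ingredient you can cite. Second, even setting that aside, proving uniform $H^1$-stability of the correct projection and interpolating with $L^2$ would amount to reproving Steinbach's theorem, and the place where the $s$-dependence of the stability constant bites is exactly what $(C3)$ packages. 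Without identifying and verifying that condition, the constant $\beta_\bd$ you obtain cannot be shown to be independent of $\ell$ for all $s\in[0,1]$ simultaneously.
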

The proof of this Lemma, together with the incumbent definitions of shape regularity, 
local quasi-uniformity and the mild conditions on $\mathcal{T}_h^{(0)}$ can be 
found in the Appendix~\ref{app:Adaptivity}.

Let us now discuss the result for the generalized $L^{2}$--projection. Let 
$\mathcal{T}_h$ be a mesh of $\Omega$. As before, we consider finite dimensional 
spaces $\widetilde{\mathbb{V}}_{h}\subset \widetilde{H}^s(\Omega)$ and ${\mathbb{
W}}_{h}\subset {H}^{-s}(\Omega)$ for $\,0\leq s \leq1$. We define a generalized 
$L^{2}$--projection $\widetilde{Q}_{h, \mathcal{T}_h}: L^2(\Omega) \rightarrow 
\widetilde{\mathbb{V}}_{h} \subset$ $L^{2}(\Gamma)$ by a Galerkin--Petrov 
variational problem,
\begin{equation}\label{eq:wQh}
\quad\left\langle\widetilde{Q}_{h,\mathcal{T}_h} u, w_{h}\right\rangle_{\Omega}
=\left\langle u, w_{h}\right\rangle_{\Omega} \quad \mathrm{for \; all} \;w_{h} 
\in \mathbb{W}_{h}\ .
\end{equation}
As a direct consequence of Lemma \ref{lem:Adaptive} and \jak{\cite[Theorem 2.2]{s2}}, 
we obtain:
\begin{cor}\label{cor:stab1}
Consider a shape regular triangulation $\mathcal{T}_h^{(0)}$ under the same assumptions as in 
Lemma~\ref{lem:Adaptive}.
Then $\widetilde{Q}_{h,\mathcal{T}_h^{(\ell)}}$ is bounded on $\widetilde{H}^s(\Omega)$, 
with operator norm $\|\widetilde{Q}_{h,\mathcal{T}_h^{(\ell)}}\|_{\mathcal{L}
(H^s(\Omega))} < C$  for all $\ell \in \mathbb{N}$ with a constant $C$ independent of $\ell$.
\end{cor}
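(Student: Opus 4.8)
The plan is to derive Corollary~\ref{cor:stab1} directly from Lemma~\ref{lem:Adaptive} via the standard equivalence between the inf-sup stability of the pairing $\bd$ on $\widetilde{\mathbb{V}}_h \times \mathbb{W}_h$ and the $\widetilde{H}^s$-boundedness of the generalized $L^2$-projection $\widetilde{Q}_{h,\mathcal{T}_h}$. First I would recall that, because $\dim \widetilde{\mathbb{V}}_h = \dim \mathbb{W}_h$ (this holds for the chosen primal-dual pair, as noted in Section~\ref{sec:adaptive}) and because Lemma~\ref{lem:Adaptive} provides the discrete inf-sup condition
\begin{equation*}
\underset{\varphi_h\in\mathbb{W}_{\ell}}{\mathrm{sup}} \dfrac{\bd(v_h, \varphi_h)}{\|\varphi_h\|_{H^{-s}(\Omega)}} \geq \beta_{\bd} \|v_h\|_{\widetilde{H}^s(\Omega)}
\end{equation*}
uniformly in $\ell$, the Galerkin--Petrov problem \eqref{eq:wQh} defining $\widetilde{Q}_{h,\mathcal{T}_h^{(\ell)}}$ is uniquely solvable. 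This is precisely the setting of \cite[Theorem 2.2]{s2}, which states that $\|\widetilde{Q}_{h,\mathcal{T}_h}\|_{\mathcal{L}(H^s(\Omega))} \leq c/\beta_{\bd}$ with $c$ depending only on the continuity constant of $\bd$ (i.e.~$\|\bd\|$, which is $1$ here since $\bd$ is the duality pairing) and on the approximation/stability properties already built into $\widetilde{\mathbb{V}}_h$.

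The key step is therefore just to check that the hypotheses of \cite[Theorem 2.2]{s2} are met on every level of the adaptive family $\Xi$. I would verify that: (i) $\widetilde{\mathbb{V}}_h \subset \widetilde{H}^s(\Omega) \subset L^2(\Omega)$ and $\mathbb{W}_h \subset H^{-s}(\Omega)$ are conforming, which is built into the construction in subsection~\ref{ssec:discretization}; (ii) the dimensions agree, giving solvability of \eqref{eq:wQh}; (iii) the uniform inf-sup constant $\beta_{\bd}$ from Lemma~\ref{lem:Adaptive} is independent of $\ell$. Granting these, \cite[Theorem 2.2]{s2} yields the bound $\|\widetilde{Q}_{h,\mathcal{T}_h^{(\ell)}} u\|_{\widetilde{H}^s(\Omega)} \leq C \|u\|_{\widetilde{H}^s(\Omega)}$ for all $u \in \widetilde{H}^s(\Omega)$, with $C = C(\|\bd\|, \beta_{\bd})$ independent of $\ell$, which is exactly the claim.

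Since the substantive work — proving that the red-green adaptive refinement preserves the inf-sup condition under the mild local-quasi-uniformity condition on $\mathcal{T}_h^{(0)}$ — is entirely carried out in Lemma~\ref{lem:Adaptive} (with details in Appendix~\ref{app:Adaptivity}), the only real obstacle in the present corollary is making sure the quantifiers match: the constant in \cite[Theorem 2.2]{s2} must be traced to depend only on $\beta_{\bd}$ and the (universal) continuity constant of the duality pairing, and not on any mesh-dependent quantity such as $h$ or the number of refinement steps $\ell$. This is immediate from the statement of \cite[Theorem 2.2]{s2} once the uniform-in-$\ell$ character of $\beta_{\bd}$ from Lemma~\ref{lem:Adaptive} is invoked, so the proof is short. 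I would close by remarking that the same reasoning, combined with the footnote variant of Lemma~\ref{lem:Adaptive}, gives the analogous stability statement for the pair $\mathbb{S}^0(\mathcal{T}_h)$--$\mathbb{S}^1(\check{\mathcal{T}}_h)$ when $-1 \leq s \leq 0$.
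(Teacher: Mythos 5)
Your proof is correct and is essentially identical to the paper's: the paper likewise obtains the corollary as a direct consequence of the uniform-in-$\ell$ inf-sup condition from Lemma~\ref{lem:Adaptive} together with \cite[Theorem 2.2]{s2}, which converts inf-sup stability of $\bd$ on the primal-dual pair into $\widetilde{H}^s(\Omega)$-boundedness of the generalized $L^2$-projection. Your added checks (conformity, equal dimensions, tracing the dependence of the constant on $\beta_{\bd}$ and $\|\bd\|$ only) are the right things to verify but do not constitute a different route.
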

Related results for the orthogonal $L^2$--projection have been of interest, e.g.~in the 
analysis of adaptive mesh refinement procedures. \\

\begin{remark}
By \cite{by} the stability condition \eqref{eq:infsupdh} is satisfied when 
$\mathbb{W}_{h} = \widetilde{\mathbb{V}}_{h}$ for $s>0$. Therefore, 
Theorem~\ref{thm:main} also holds in that case.
\end{remark}
}

\subsection{Opposite order preconditioning}
As an alternative to our preconditioner, if $A$ is of order $2s$, one may 
consider to use the bilinear form $\bc_{-s}$ arising from the Dirichlet problem 
\eqref{e:bilin} for the operator $(-\Delta)^{-s}$ to build a preconditioner for 
$\ba_{A}$. In the case of boundary integral equations this approach is 
well-established as Calder\'{o}n preconditioning, specially on closed surfaces. 
For the boundary problems here, we note that the resulting spectral condition 
number \rcc{may} not be $h$-independent, due to the mismatch of the mapping 
properties of the operators. Indeed, \rcu{we obtain the following condition number 
bound in terms of $h$.}

\begin{proposition}\label{prop:blowup}
Let $\vert s \vert \in (1/2,1]$ and set $\widetilde{\mathbb{V}}_{h}=
\mathbb{S}^p(\mathcal{T}_h)\cap\widetilde{H}^{s}(\Omega), \, p=0,1$.
 Let $\tilde{\mathbf{C}}_s$ be the Galerkin matrix induced by $\bc_{-s}$
 \rcu{in $\widetilde{\mathbb{V}}_{h}$}. 
 Then, the following bound on the spectral condition number is 
 satisfied when $h$ is sufficiently small:
 \begin{equation}\label{eq:condBound}
 \kappa\left( \mathbf{D}^{-1} \tilde{\mathbf{C}}_s \mathbf{D}^{-T} 
 \mathbf{A}\right) \leq \mathcal{O}(h^{-2\vert s \vert + 1})\dfrac{C_{\gamma} 
 C_A \Vert \bd \Vert^2}{\beta_{A} \beta_{\gamma} \beta_{\bd}^2},
\end{equation}
where $C_{\gamma}$ and $\beta_\gamma$ are the continuity and \cu{ellipticity} 
constants of $\bc_{-s}$.
\end{proposition}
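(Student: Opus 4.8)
\textbf{Proof proposal for Proposition~\ref{prop:blowup}.}
The plan is to exploit the general operator-preconditioning bound \eqref{mainresult}, but now applied to the \emph{wrong} pair of bilinear forms, so that the mismatch in Sobolev orders produces an explicit power of $h$. First I would recall that the operator $(-\Delta)^{-s}$, and hence the bilinear form $\bc_{-s}$, is continuous and elliptic on $\widetilde{H}^{-s}(\Omega)$ (rather than on $H^{-s}(\Omega)$, which is what the matching theory would require for a form pairing with $\ba_A$). For $|s|\in(1/2,1]$ one has the strict inclusion $\widetilde{H}^{-s}(\Omega)\supsetneq H^{-s}(\Omega)$ on a Lipschitz domain, but more to the point one has the (non-uniform) embedding $\widetilde{H}^{-s}(\Omega)\hookrightarrow H^{-s}(\Omega)$ only with a constant that is in fact fine, whereas the reverse direction — needed to transfer the ellipticity of $\bc_{-s}$ on $\widetilde{H}^{-s}$ into an ellipticity estimate measured in the $H^{-s}$ norm on the discrete space $\mathbb{S}^p(\mathcal{T}_h)$ — costs a factor of $h^{-2|s|+1}$ by an inverse inequality. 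Concretely, the key auxiliary estimate I would isolate is: for $v_h\in\mathbb{S}^p(\mathcal{T}_h)\cap\widetilde{H}^s(\Omega)$,
\begin{equation*}
\|v_h\|_{\widetilde{H}^{-s}(\Omega)} \leq C\, h^{\,1-2|s|}\,\|v_h\|_{\widetilde{H}^{s}(\Omega)},
\end{equation*}
which follows from a scaled inverse inequality between $H^{-s}$ and $H^{s}$ norms on piecewise-polynomial spaces on a quasi-uniform mesh (localise to cells of size $h$, use $\|w\|_{H^{-s}(\tau)}\lesssim h^{-s}\|w\|_{L^2(\tau)}$ up in one direction and $\|w\|_{H^{s}(\tau)}\gtrsim h^{s-1}\|w\|_{L^2(\tau)}$—with the extra factor coming from the $\widetilde{H}$ seminorm across cell boundaries; the borderline $|s|>1/2$ is exactly where this becomes a genuine loss).

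Granting this, the argument runs as follows. The restriction of $\bc_{-s}$ to $\widetilde{\mathbb{V}}_h$ still satisfies a discrete inf–sup/ellipticity condition, but now with the $\widetilde{H}^{-s}$ norm replaced by $h^{1-2|s|}$ times the $\widetilde{H}^{s}$ norm via the inverse estimate; i.e. $\bc_{-s}(v_h,v_h)\geq \beta_\gamma \|v_h\|_{\widetilde{H}^{-s}(\Omega)}^2 \gtrsim \beta_\gamma h^{2|s|-1}\cdot h^{2-4|s|}\|v_h\|^2_{\widetilde H^s}$—so one must be careful which direction the loss goes; the honest statement is that the inf–sup constant of the triple $(\ba_A,\bc_{-s},\bd)$ measured in the fixed norms $\widetilde{H}^s$ and $H^{-s}$ degrades by at most $h^{1-2|s|}$. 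Then I would simply invoke the abstract estimate \cite[Theorem 2.1]{h} (the same one that gives \eqref{mainresult}) with $\mathbf{B}$ replaced by $\tilde{\mathbf{C}}_s$: the bound on $\kappa(\mathbf{D}^{-1}\tilde{\mathbf{C}}_s\mathbf{D}^{-T}\mathbf{A})$ is $C_\gamma C_A\|\bd\|^2/(\beta_A\beta_\gamma\beta_{\bd}^2)$ but with $\beta_\gamma$ the \emph{degraded} ellipticity constant, which carries the extra factor, yielding \eqref{eq:condBound}.

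The main obstacle is proving the scaled inverse inequality between the $\widetilde H^{-s}$ and $\widetilde H^{s}$ norms on $\mathbb{S}^p(\mathcal{T}_h)$ with the sharp exponent $1-2|s|$, and in particular tracking that the exponent is exactly this (and not, say, $-2|s|$ or $2-4|s|$), which requires care at the two endpoints: the factor $h^{-s}$ from $L^2\to H^{-s}$ and the factor $h^{-s}$ from $H^s\to L^2$ would naively give $h^{-2s}$, and the improvement to $h^{1-2s}$ comes from using that functions in $\widetilde{\mathbb{V}}_h$ vanish outside $\overline\Omega$ so their $H^s(\mathbb R^n)$-seminorm picks up the jumps across the boundary mesh facets—this is precisely the mechanism identified in \cite{ms} for $s=1/2$ (where the loss degenerates to a logarithm) and the content here is that for $|s|>1/2$ it becomes an algebraic blow-up. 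A secondary, routine point is checking that $\bd$ and $\ba_A$ retain their $h$-uniform bounds on the same spaces, which is immediate since their norms are the ones already assumed in \eqref{eq:infsupAh} and in the definition of $\|\bd\|$; and one should state that "sufficiently small $h$" is needed only so that the asymptotic inverse estimate dominates lower-order terms.
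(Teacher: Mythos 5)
Your overall strategy---invoke the abstract bound \eqref{mainresult} with an $h$-dependent degradation of one constant, ultimately traced to the boundary---is the same as the paper's, but you locate the degradation in the wrong place and your key auxiliary inequality would not produce it.

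The inequality $\|v_h\|_{\widetilde{H}^{-s}(\Omega)} \leq C\, h^{1-2|s|}\,\|v_h\|_{\widetilde{H}^{s}(\Omega)}$ is in the wrong direction to be useful: one always has $\widetilde{H}^{s}(\Omega)\hookrightarrow L^2(\Omega)\hookrightarrow\widetilde{H}^{-s}(\Omega)$ with $h$-independent constants, so this inequality already holds with a constant independent of $h$; since $h^{1-2|s|}\to\infty$ for $|s|>1/2$, your version is strictly weaker and yields nothing. Likewise, the loss does not land in $\beta_\gamma$: $\bc_{-s}$ is elliptic on $\widetilde{H}^{-s}(\Omega)$, and because restriction $\widetilde{H}^{-s}(\Omega)\to H^{-s}(\Omega)$ is a contraction, $\bc_{-s}(w_h,w_h)\geq\beta_\gamma\|w_h\|^2_{\widetilde{H}^{-s}(\Omega)}\geq\beta_\gamma\|w_h\|^2_{H^{-s}(\Omega)}$ holds without any $h$-dependence. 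So the ellipticity constant is uniform and cannot carry the factor.

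The paper instead puts the whole degradation into the inf-sup constant $\beta_\bd$ of the duality pairing. To use $\bc_{-s}$ one must test against $\widetilde{\mathbb{W}}_h\subset\widetilde{H}^{-s}(\Omega)$ (boundary degrees of freedom set to zero), while \eqref{eq:infsupdh} is assumed for the unconstrained $\mathbb{W}_h$. The paper shows that the generalized $L^2$-projection $\widetilde{P}_h:\mathbb{W}_h\to\widetilde{\mathbb{W}}_h$ satisfies $\|\widetilde{P}_h\Phi_h\|_{H^{-s}(\Omega)} \lesssim \bigl(1 + \tfrac{c}{|s|-1/2}\,h^{1/2-|s|}\bigr)\|\Phi_h\|_{H^{-s}(\Omega)}$, which follows by duality from the primal estimate of Lemma~\ref{lem:L2projVh}: zeroing the boundary nodal values of a piecewise-linear $u_h$ changes it in $L^2$ by at most $h^{1/2}\|u_h\|_{L^2(\partial\Omega)}$; the trace theorem gives $\|u_h\|_{L^2(\partial\Omega)}\lesssim \tfrac{1}{|s|-1/2}\|u_h\|_{H^{s}(\Omega)}$, whose constant diverges as $|s|\to 1/2^+$ and explains why the blow-up is only logarithmic at the threshold; and the $L^2\to H^s$ inverse estimate converts this into an $O(h^{1/2-|s|})$ loss in $H^s$. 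The resulting $\tilde\beta_\bd \sim (1+h^{1/2-|s|})^{-1}$ appears squared in \eqref{mainresult}, yielding $O(h^{1-2|s|})$. You correctly intuit that the loss comes from the boundary facets, but to close the argument you must isolate it as the cost of setting boundary DOFs to zero via the generalized $L^2$-projection, and track it through $\beta_\bd$, not through an inverse inequality in the bulk or through $\beta_\gamma$.
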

The proof follows similar arguments to those in \cite{CHR02} and is provided 
in \rcu{Appendix~\ref{app:blowup}. For $s=\pm \frac{1}{2}$ a logarithmic 
 growth of the condition number in $h$ is well-known for Calder\'{o}n preconditioning 
 on screens \cite{hju0,hju}.}


\section{Numerical Experiments}\label{sec:exp}

\cu{In order to test our preconditioner, we study different pseudodifferential
operators $A$ and implement their bilinear forms $\ba_A$ in}
$\widetilde{\mathbb{V}}_h=\mathbb{S}^1(\mathcal{T}_h)\cap \widetilde{H}^{s}(\Omega
)$ as described in \cite{ag,hgs}. The bilinear form $\bb_{\rcu{s},\chi}$ is implemented 
in $\mathbb{W}_h = \mathbb{S}^0(\mathcal{T}_h^{\prime})$ on the corresponding 
(barycentric) dual mesh \rc{\cite[Section 3]{hut}}. 

\cu{When operators have singular kernels, as it is the case for $\bb_{\rcu{s},\chi}$, the}
implementations of the bilinear forms 
split the integral into a singular part near $x=y$ and a regular complement. The 
singular integral is evaluated using a composite graded quadrature rule, which 
converts the integral over two elements into an integral over $[0,1]^4$ and 
resolves the singular integral with a geometrically graded composite quadrature 
rule.
The regular part is evaluated using a standard composite quadrature rule. This 
approach is standard in boundary element methods\jak{, see} \cite[Chapter 5]{ss}
\footnote{\jak{\rcc{\texttt{MATLAB}} code for the assembly of the preconditioner 
\rcc{for $s>0$} is available on 
\textit{github.com/nc09jsto/preconditionercode}}. \rcc{The case $s=-1/2$ was 
assembled using \texttt{BETL2} \cite{HIK12}, which currently cannot handle adaptive 
refinements. }}.

\jak{For the specific values of $s$ used in the experiments we employ formulas from 
Remark~\ref{rem:3}. For general values of $s$, due to Lemma~\ref{2f1lemma}, one 
can make use of computational libraries such as \cite[Section 4]{pop} for the 
hypergeometric function ${}_2F_1$.}

Numerical results for the weakly singular and hypersingular operators on 
open curves and surfaces, where $s=\pm \frac{1}{2}$, may be found in \cite{hju0,hju}. 

Here we perform numerical experiments for pseudodifferential operators related 
to the fractional Laplacian on quasi-uniform meshes; on graded triangular meshes, which 
lead to quasi-optimal convergence rates \rc{\cite{ab,hgepsjs}}; and on adaptively 
generated triangular meshes obtained using Algorithm \ref{alg:Adaptive}. In all 
cases we report the achieved spectral condition numbers (denoted as $\kappa$) 
\rcu{and} iterations needed to solve the linear system (labeled \textit{It.}). \cu{We use 
conjugate gradient (CG) when $A$ is symmetric, and GMRES when it is not.}
$N$ denotes the number of degrees of freedom (dofs).
The CG/GMRES iterations were counted until the relative Euclidean norm of the 
residual was $10 ^{-10}$.

\textcolor{black}{Note that we report condition numbers and iteration counts to measure the performance of our preconditioner. A theoretical discussion of runtime complexity is beyond the scope of this work. We mention, however, that implementations which avoid the barycentric dual mesh have been investigated in \cite{svv,svv2} and multilevel preconditioners for negative order operators with linear complexity have been addressed in \cite{svv3}.}

\begin{remark}
For the numerical experiments below, we follow Algorithm~\ref{alg:Adaptive} 
with the following considerations:
\begin{itemize}
\item In step 2, we use the residual error indicators introduced in \cite{ag,hgs}. 
This means: For $\alpha>0$, we approximate the dual norms 
$\| v_h \|_{H^{-\alpha}\rcc{(\rc{\Omega})}}$ and $\|v_h\|_{H^{\alpha}\rcc{(\rc{\Omega})}}$ by the scaled $L^2$-norms 
$h^{\alpha} \|v_h\|_{L^2\rcc{(\rc{\Omega})}}$ and $h^{-\alpha}\|v_h\|_{L^2\rcc{(\rc{\Omega})}}$ , respectively. We 
define the local error indicators $\eta^{(\ell)}(\tau_k)$ for all elements 
$\tau_k \in \mathcal{T}_h^{(\ell)}$:
\fj{\begin{equation*}
\eta^{(\ell)}(\tau_k)^2 := \sum_{i \in \mathcal{N}_h } h_i^{2s} \|(r_h-\bar{
r}_{h})\varphi_i\|^2_{L^2(\omega_i)}\ ,
\end{equation*}}
\cu{wh}ere, $\mathcal{N}_h$ is the set of all vertices \cu{in $\mathcal{T}_h^{(
\ell)}$}, \fj{$r_h := f - (-\Delta)^s u_h$, and $\bar{r}_{h} := \frac{\int_{\omega_i}
r_h\varphi_i}{\int_{\js{\omega_i}}\varphi_i}$} for the interior vertices $i \in 
\mathcal{N}_h$, and $\bar{r}_h=0$ otherwise. Here, $\varphi_i$ is a piecewise linear basis function \rcu{in the span of 
$\widetilde{\mathbb{V}}_{h}$} and $\omega_i := \operatorname{supp}  \, 
\varphi_i $. All integrals are evaluated using a Gauss-Legendre 
quadrature.
 \item In step 6, we use red-green refinement subject to the $1$--irregularity and 
$2$--neighbour rules (see \rcc{Definitions~\ref{def:1-reg}--\ref{def:red-green} in 
Appendix~\ref{app:Adaptivity}} or \cite{bsw} for further details). 
\end{itemize}

\end{remark}


\begin{figure}[!h]
\centering
\subfloat[][Quasi-uniform]{\includegraphics[width = 0.3\textwidth]{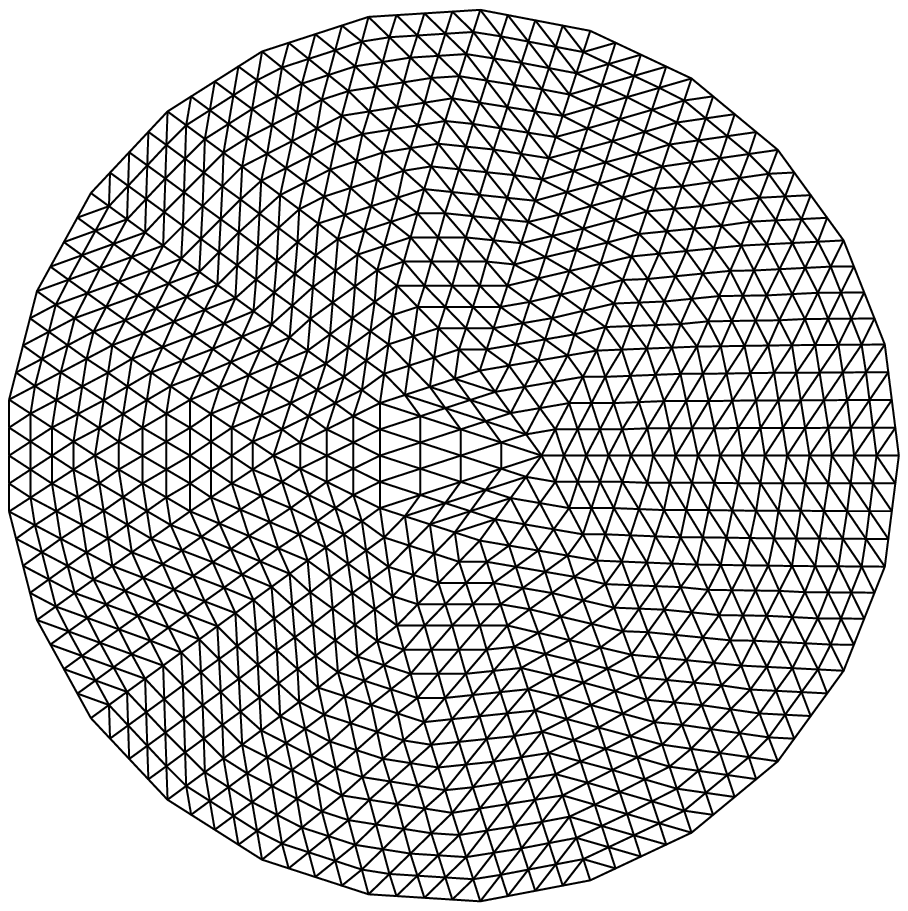}
\label{fig:mesha}}
\subfloat[][$2$-graded]{\includegraphics[width = 0.3\textwidth]{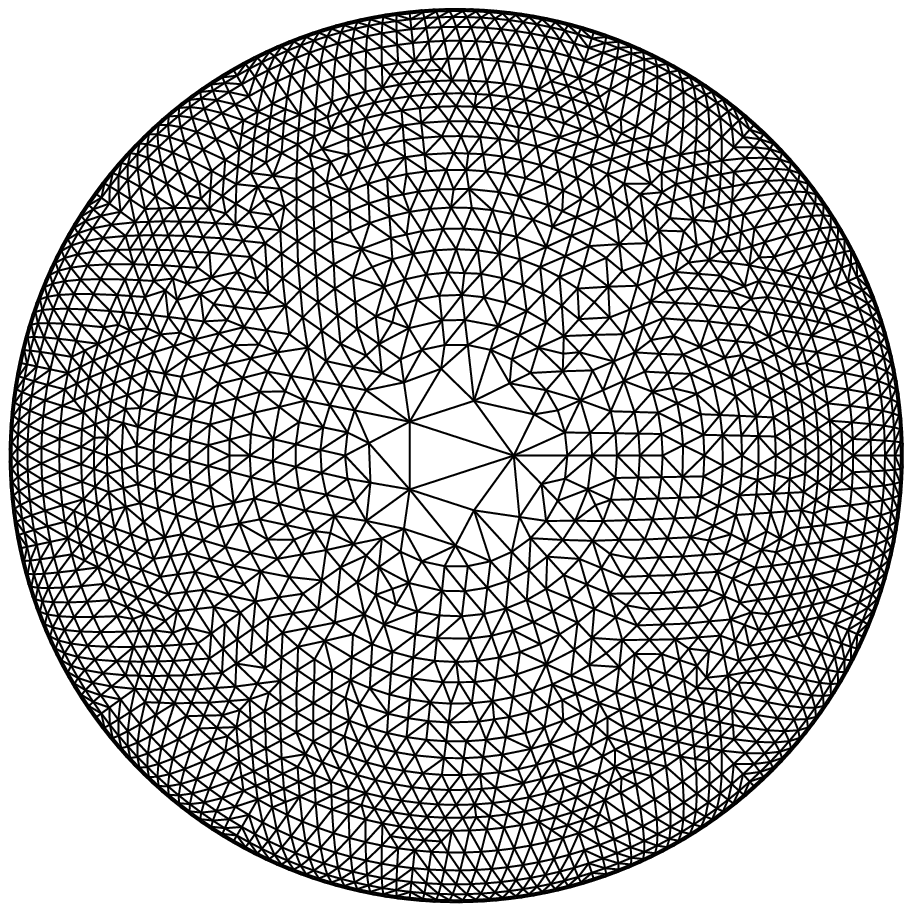}
\label{fig:meshb}}
\subfloat[][adaptively generated]{\includegraphics[width = 0.3\textwidth]{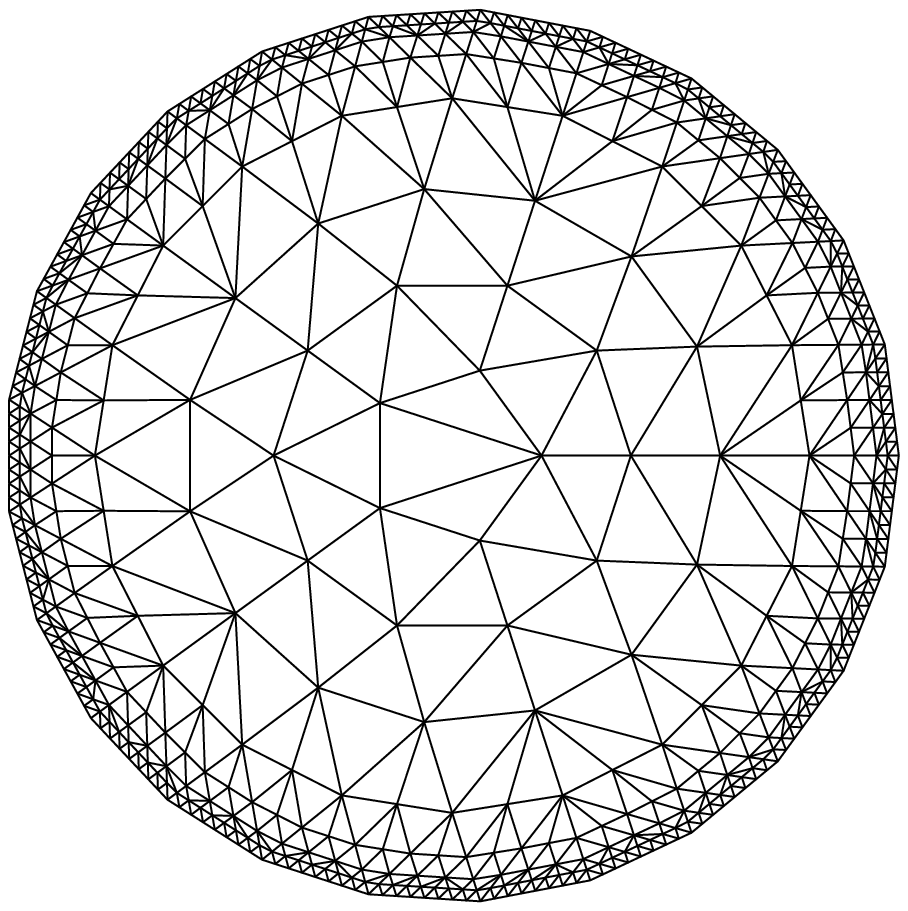}
\label{fig:meshc}}
\caption{Meshes for $\calB_1$.}\label{meshes}
\end{figure}

\begin{example}\label{example1}
We consider the discretization of the Dirichlet problem \eqref{e:FLweakGamma} 
with $A=(-\Delta)^{s}$ and $f = 1$ in the unit disk $\calB_1 \subset\mathbb{R}^2$. 
The exact solution for this problem is given by $u(x) = a_{n,s} (1-|x|^2)^{s}$, 
where $a_{n,s} := \dfrac{\mathsf{\Gamma}(n/2)}{2^{2s} \mathsf{\Gamma}(1+s) 
\mathsf{\Gamma}(s+n/2)}$. $\calB_1$ is approximated by three meshes: 
quasi-uniform, $2$-graded, and adaptively generated triangular meshes as 
depicted in Figure~\ref{meshes}. We consider fractional exponents $s=-\frac{1}{2}, \frac{1}{4}, 
\frac{7}{10}, \frac{3}{4}$, to indicate the general applicability of our methods.
\end{example}

Tables~\ref{t:Uniform_ex1}--\ref{t:Adaptive_ex1} show the results of the Galerkin 
matrix $\mathbf{A}$ and its preconditioned form $\mathbf{P}\mathbf{A}$ for the 
different fractional exponents on the three families of meshes under 
consideration\rcu{.} 

On all three classes of meshes, the condition number and the number of solver 
iterations for $\mathbf{A}$ show the expected strong growth when increasing $N$, 
while they are small and bounded for $\mathbf{P}\mathbf{A}$. We remark that the
reduction of CG iterations achieved by our preconditioner is significant, with 
a higher reduction for larger \jak{$|s|$}. Furthermore, $\kappa(\mathbf{P}\mathbf{A})$ 
remains almost constant across the refinement levels when $s=\frac{1}{4}$. We 
note, however, a very slow growth for $s=\frac{7}{10}$ and $s=\frac{3}{4}$ for 
the considered dofs. \jak{For $s=-\frac{1}{2}$ we obtain larger condition numbers consistent with previous observations \cite{hju}. We note the larger condition number for the last data point on $2$-graded meshes of the preconditioned problem. We attribute this to a discretization error of the particular implementation.}  \sj{Even though we use the exact inverse on the unit disk 
\cu{to build our} preconditioner, \cu{it is worth noticing that in this case 
$\mathbf{P}\mathbf{A}$ achieves only an approximate identity after
discretization. This approximation error, together with the tolerance of $10^{-10}$ 
for the residual, explain why condition numbers and CG iteration counts are larger 
than 1.}}

\begin{table}[h!]\centering \footnotesize
\caption{Condition numbers and CG iterations on quasi-uniform mesh (Figure~\ref{fig:mesha}), 
Example~\ref{example1}.\\}\label{t:Uniform_ex1}
 \centering \setlength{\tabcolsep}{3.3pt}
\begin{tabular}{|c|cc|cc|cc|cc|cc|cc|cc|cc|}\hline
 & \multicolumn{4}{c|}{$s=-1/2$} & \multicolumn{4}{c|}{$s=1/4$} & 
\multicolumn{4}{c|}{$s=7/10$} & \multicolumn{4}{c|}{$s=3/4$} \\ \cline{2-17}
\rule{0pt}{11pt} N & 
\multicolumn{2}{c|}{$\mathbf{A}$} & \multicolumn{2}{c|}{$\mathbf{P}\mathbf{A}$} 
&
\multicolumn{2}{c|}{$\mathbf{A}$} & \multicolumn{2}{c|}{$\mathbf{P}\mathbf{A}$} 
&\multicolumn{2}{c|}{$\mathbf{A}$} & \multicolumn{2}{c|}{$\mathbf{P}\mathbf{A}$}
&\multicolumn{2}{c|}{$\mathbf{A}$} & \multicolumn{2}{c|}{$\mathbf{P}\mathbf{A}$}
\\ \cline{2-17}
\rule{0pt}{11pt}& $\kappa$ & It. & $\kappa$ & It. & $\kappa$ & It. & $\kappa$ & It. & $\kappa$ & It. & $\kappa$ & It. & $\kappa$ & It. & $\kappa$ & It. \\ \hline
  123 & 35.63 & 27 & 2.61 & 12 & 1.98 & 12 & 1.16 & 6 &   6.85 &  15 & 1.50 & 9 &8.24 & 16 & 1.54 & 10 \\
  492 & 73.58 & 40 & 2.69 & 12 & 2.65 & 13 & 1.20 & 7 &  20.87 &  28 & 1.52 & 10 &  26.99 &  30 & 1.54 & 10\\
 1968 & 153.95 & 56 & 2.74 & 13 & 4.11 & 16 & 1.25 & 7 &  62.10 &  47 & 1.56 & 10 &  87.24 &  51 & 1.72 & 11\\
 7872 & 316.74 & 78 & 2.78 & 13 & 6.34 & 21 & 1.26 & 7 & 176.19 &  79 & 1.76 & 11 & 268.02 &  92 & 2.14 & 12\\ 
31488 & 643.01 & 131 & 2.83 & 14 & 9.36 & 27 & 1.28 & 7 & 478.78 & 135 & 1.93 & 11 & 784.22 & 160 & 2.57 & 12\\ \hline
 \end{tabular}
\end{table}

\begin{table}[h!]\centering \footnotesize
\caption{Condition numbers and CG iterations on $2$-graded mesh 
(Figure~\ref{fig:meshb}), Example~\ref{example1}.\\}\label{t:Graded_ex1}
 \centering \setlength{\tabcolsep}{3.0pt}
\begin{tabular}{|c|c c|c c|c c|c c|c c|c c|c c|c c|}\hline
 & \multicolumn{4}{c|}{$s=-1/2$} & \multicolumn{4}{c|}{$s=1/4$}  & 
\multicolumn{4}{c|}{$s=7/10$} & \multicolumn{4}{c|}{$s=3/4$} \\ \cline{2-17}
\rule{0pt}{11pt} N & 
\multicolumn{2}{c|}{$\mathbf{A}$} & \multicolumn{2}{c|}{$\mathbf{P}\mathbf{A}$} 
&\multicolumn{2}{c|}{$\mathbf{A}$} & \multicolumn{2}{c|}{$\mathbf{P}\mathbf{A}$} 
&\multicolumn{2}{c|}{$\mathbf{A}$} & \multicolumn{2}{c|}{$\mathbf{P}\mathbf{A}$}
&\multicolumn{2}{c|}{$\mathbf{A}$} & \multicolumn{2}{c|}{$\mathbf{P}\mathbf{A}$}
\\ \cline{2 - 17}
\rule{0pt}{11pt}&  $\kappa$ & It. & $\kappa$ & It. &  $\kappa$ & It. & $\kappa$ & It. 
& $\kappa$ & It. & $\kappa$ & It. & $\kappa$ & It. & $\kappa$ & It. \\ \hline
  123 & 35.63 &  27 & 2.61 & 12 & 8.41 & 20 & 1.14 & 6 &   4.53 & 16 & 1.72 & 11 &   5.17 &  16 & 1.94 & 12 \\
 1068 & 8190.98 & 255 & 4.92 & 20 & 23.33 & 36 & 1.21 & 7 &  28.33 & 32 & 2.42 & 14 &  33.57 &  34 & 2.92 & 14 \\
 4645 & 24657.62 & 431 & 6.17 & 22 & 41.63 & 44 & 1.25 & 7 & 106.53 & 70 & 2.85 & 14 & 133.26 &  75 & 3.65 & 15 \\
13680 & 58165.89 & 620 & 9.25 & 26 & 63.52 & 48 & 1.27 & 7 & 282.57 & 99 & 2.97 & 14 & 364.14 & 116 & 3.87 & 16 \\ \hline
 \end{tabular}
\end{table}

\begin{table}[h!]\centering \footnotesize
\caption{Condition numbers and CG iterations on adaptively generated meshes
(Figure~\ref{fig:meshc}), Example~\ref{example1}.\\}\label{t:Adaptive_ex1}
 \centering 
\begin{tabular}{|c|c c|c c|c c|c c|c c|c c|}\hline
 & \multicolumn{4}{c|}{$s=1/4$}  & 
\multicolumn{4}{c|}{$s=7/10$} & \multicolumn{4}{c|}{$s=3/4$} \\ \cline{2-13}
\rule{0pt}{11pt} N & 
\multicolumn{2}{c|}{$\mathbf{A}$} & \multicolumn{2}{c|}{$\mathbf{P}\mathbf{A}$} 
&\multicolumn{2}{c|}{$\mathbf{A}$} & \multicolumn{2}{c|}{$\mathbf{P}\mathbf{A}$}
&\multicolumn{2}{c|}{$\mathbf{A}$} & \multicolumn{2}{c|}{$\mathbf{P}\mathbf{A}$}
\\ \cline{2 - 13}
\rule{0pt}{11pt}&  $\kappa$ & It. & $\kappa$ & It. 
& $\kappa$ & It. & $\kappa$ & It. & $\kappa$ & It. & $\kappa$ & It. \\ \hline
 123 &    1.98 &  12 & 1.16 & 6 &  6.85 & 15 & 1.50 & 10 &  8.24 & 16 & 1.54 &  9 \\
 238 &    5.39 &  22 & 1.17 & 6 &  7.82 & 21 & 1.60 & 10 &  9.22 & 21 & 1.67 & 11 \\
 518 &   15.46 &  37 & 1.20 & 7 & 11.27 & 28 & 1.76 & 11 & 12.55 & 29 & 1.89 & 12 \\
1098 &   45.30 &  58 & 1.21 & 7 & 17.53 & 37 & 1.83 & 11 & 18.15 & 38 & 2.01 & 12 \\ 
2278 &  131.77 &  85 & 1.23 & 7 & 28.28 & 48 & 1.91 & 12 & 27.17 & 48 & 2.16 & 13 \\
4658 &  386.95 & 121 & 1.26 & 8 & 46.65 & 65 & 2.00 & 12 & 41.48 & 61 & 2.35 & 14 \\
9438 & 1138.72 & 165 & 1.27 & 8 & 78.41 & 85 & 2.08 & 13 & 64.30 & 77 & 2.50 & 14 \\ \hline
\end{tabular}
\end{table}

To gain further insight about this small growth in $\kappa(\mathbf{P}\mathbf{A
})$, we also inspect the eigenvalues of $\mathbf{A}$ and $\mathbf{P}\mathbf{A}$ 
for the two families of meshes where this behaviour is more notorious. These are 
displayed in Figure~\ref{f:eigvals_ex1}. We see in plots $(a), (c), (e)$ that 
the spectra on quasi-uniform meshes are as expected, while on graded meshes, 
plots $(b), (d), (f)$ reveal that the clustering of eigenvalues for the 
preconditioned matrix still increases slowly with the dofs. As the slope of 
this small growth tends to $0$ when augmenting the number of dofs, we attribute 
it to the preasymptotic regime. 

%
%

\begin{figure}[!ht]
\centering
\subfloat[][Uniform mesh, $s=\frac{1}{4}$]
{\includegraphics[width = 0.43\textwidth]{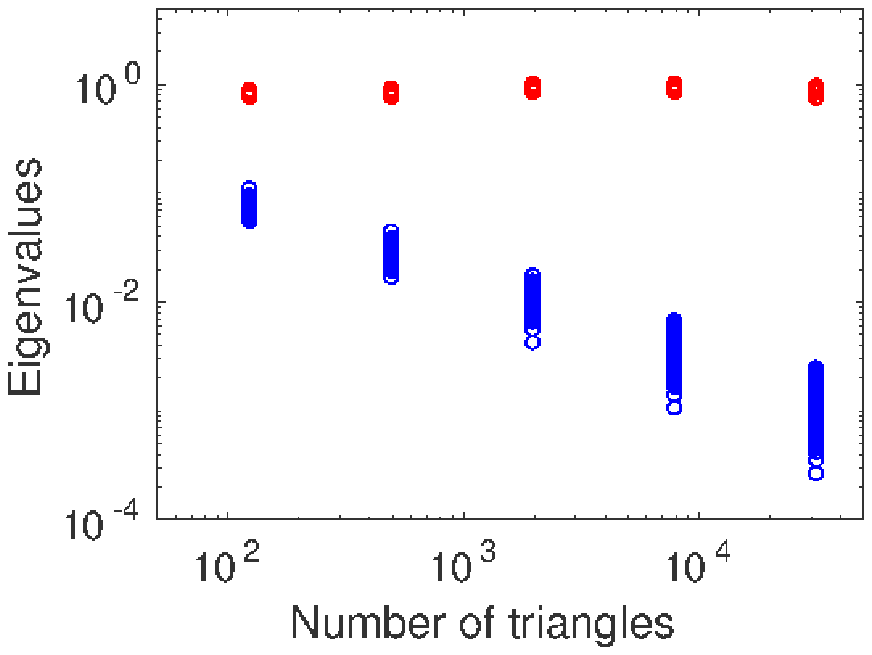}}
\subfloat[][$2$--graded mesh, $s = \frac{1}{4}$]
{\includegraphics[width = 0.43\textwidth]{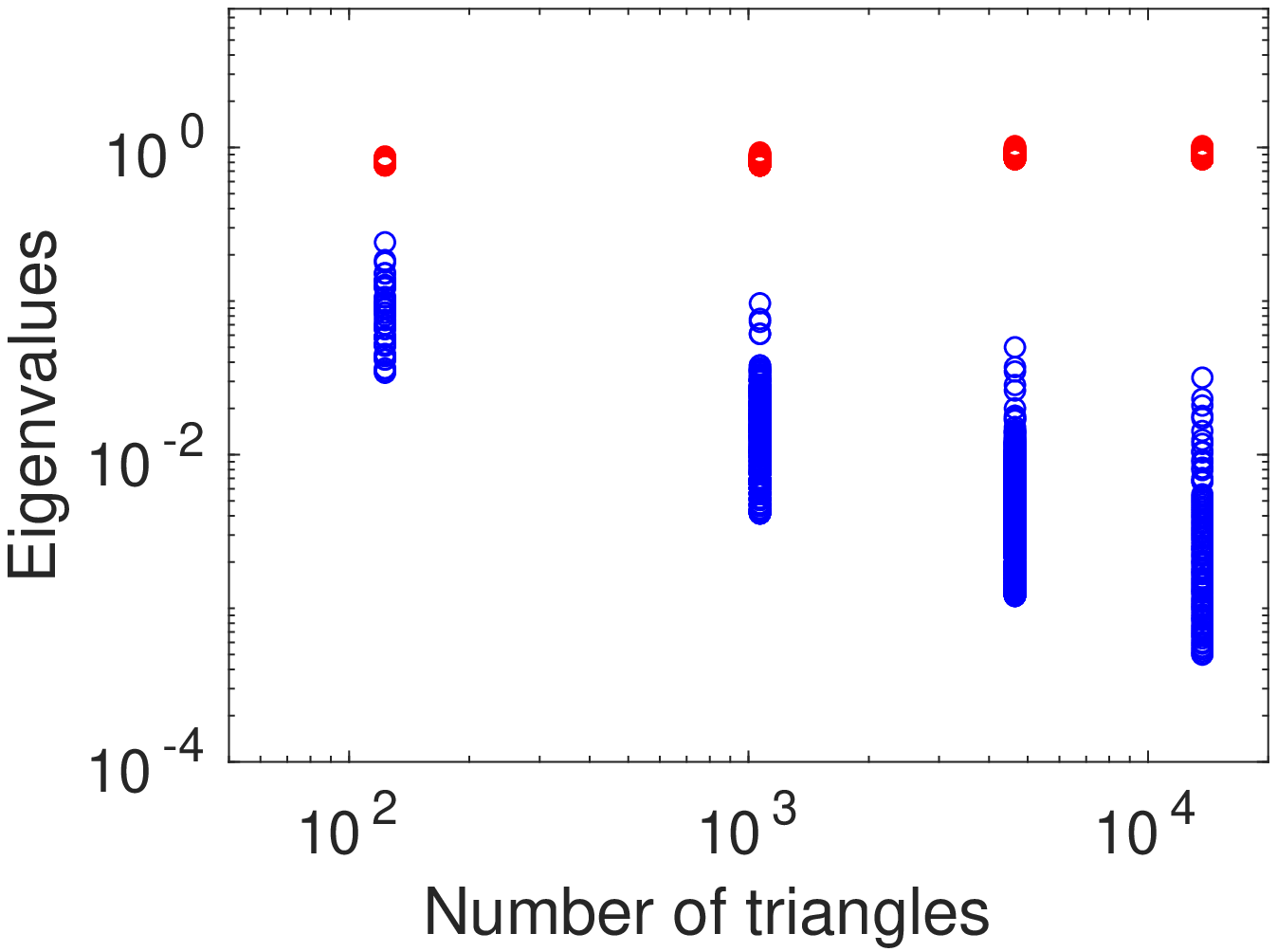}}\\

\subfloat[][Uniform mesh, $s=\frac{7}{10}$]
{\includegraphics[width = 0.43\textwidth]{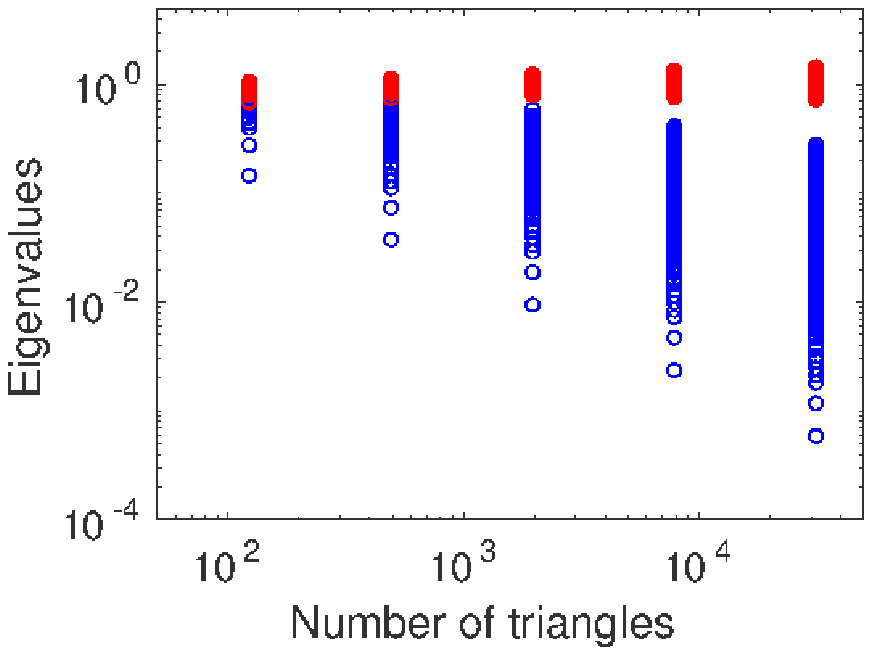}}
\subfloat[][$2$--graded mesh, $s = \frac{7}{10}$]
{\includegraphics[width = 0.43\textwidth]{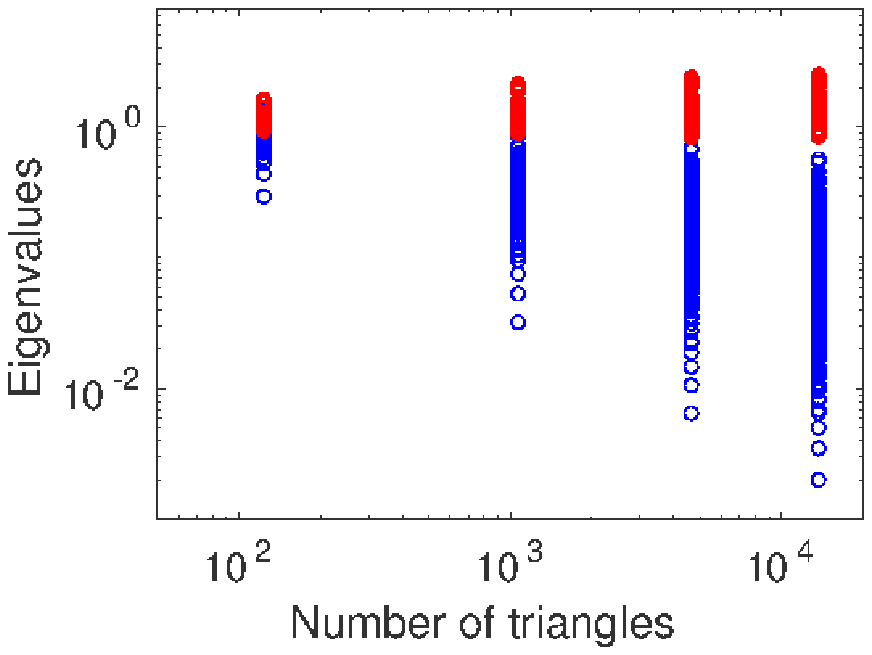}}\\

\subfloat[][Uniform mesh, $s=\frac{3}{4}$]
{\includegraphics[width = 0.43\textwidth]{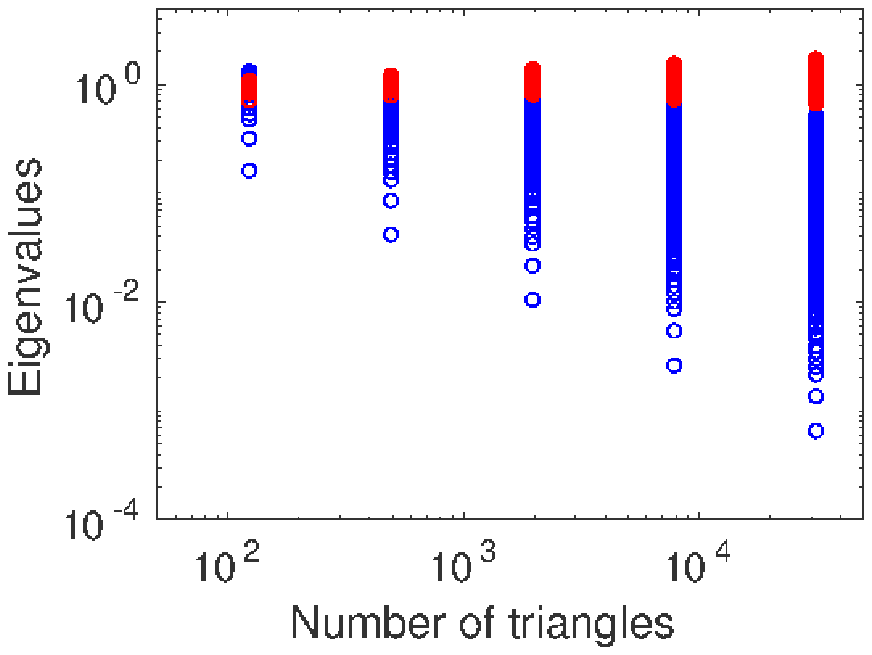}}
\subfloat[][$2$--graded mesh, $s = \frac{3}{4}$]
{\includegraphics[width = 0.43\textwidth]{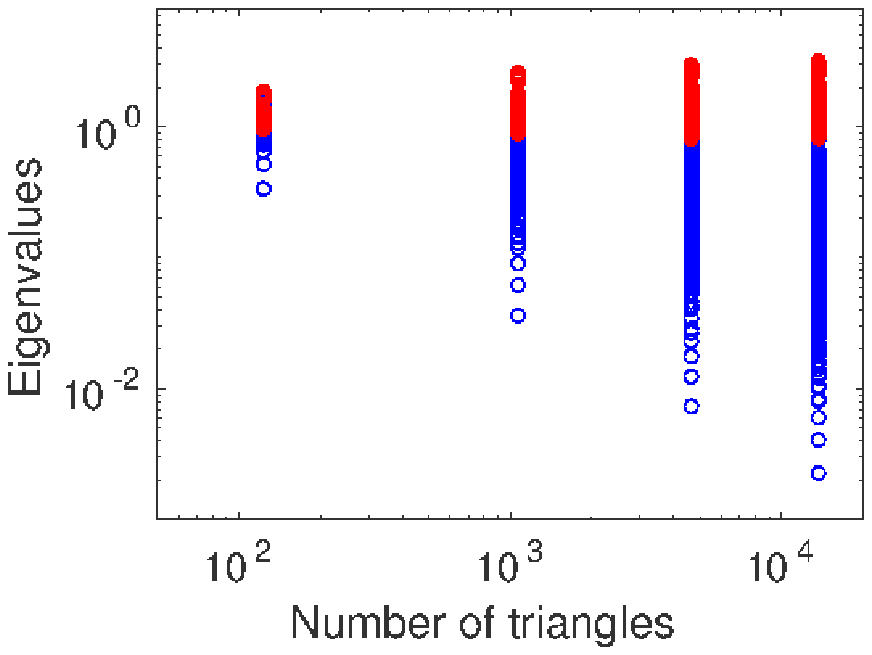}}
\caption{Eigenvalues of $\mathbf{A}$ (blue), resp.~$\mathbf{P}\mathbf{A}$ (red), 
Example \ref{example1}. }\label{f:eigvals_ex1}
\end{figure}


The next example illustrates the performance of the preconditioner defined by 
the bilinear form \eqref{e:b_mapped} on a domain bi-Lipschitz to $\calB_1$.

\begin{example}\label{example1a} We consider the discretization of the Dirichlet 
problem \eqref{e:FLweakGamma} with $A=(-\Delta)^{s}$ and $f = 1$ in the L-shaped 
domain $\Omega = [-1,3]^2 \setminus [1,3]^2 \subset \mathbb{R}^2$ depicted in 
Figure~\ref{f:Lshape}a. We examine fractional exponents $s = \frac{1}{4}, \frac{1}
{2}, \frac{3}{4}$ on quasi-uniform, geometrically and algebraically graded meshes, 
see Figure~\ref{f:graded_meshes} for an illustration. A numerical solution on a 
mesh with $3968$ elements is shown in Figure~\ref{f:Lshape}b. The preconditioner 
is computed using the radial projection $\chi$ from the L-shaped domain to 
$\calB_1$. Here,
\begin{align*}
\chi: \Omega \to \mathcal{B}_1, \quad \chi(x) = \frac{1}{r(x)}\frac{x}{|x|},
\end{align*}
where $r(x) := \sup \lbrace \lambda \in [1,\infty): \lambda x \in \Omega \rbrace$.


\end{example}

Tables~\ref{tab:UN}--\ref{tab:SRAG} display the results of the Galerkin matrix 
$\mathbf{A}$ and its preconditioned form $\mathbf{P}\mathbf{A}$  on a sequence 
of corresponding meshes. As in the unit disk $\calB_1$ in Example \ref{example1},
the condition number and the number of solver iterations for $\mathbf{A}$ show 
a strong increase with augmenting the dofs $N$, while the growth is small and 
of slope tending to $0$ for $\mathbf{P}\mathbf{A}$. We also note that the size 
of the condition numbers is slightly bigger than those from 
Example~ \ref{example1}. This is a consequence of the fact that the 
preconditioner is no longer defined from an exact solution operator to the 
continuous problem, and thus the bound on the condition number is $h$-independent, 
yet larger than in the previous example. Indeed, as predicted by the theory, we 
see that the condition numbers and CG iterations obtained with the preconditioner 
remain small and bounded on quasi-uniform and geometrically graded meshes. 
However, the condition numbers of $\mathbf{P}\mathbf{A}$ for the 
algebraically graded meshes (Figure~\ref{fig:agLshno}) do not remain bounded. 
This is consistent with the theory, which applies to shape regular meshes, a 
condition not satisfied here. In order to \js{illustrate this further}, we 
also study a shape regular variant of the algebraically graded meshes 
(Figure~\ref{fig:agLshyes}). The obtained results are reported in 
Table~\ref{tab:SRAG}, which reveals that the condition numbers are bounded 
again. \textcolor{black}{We point out that the assumptions of Theorem \ref{thm:main} are satisfied under certain mesh conditions introduced in Appendix A.2. The algebraically graded meshes from 
Figure~\ref{fig:agLshno}) violate the shape regularity condition $(C1)$ (and also condition $(C3)$ for $s=\frac{3}{4}$), 
while all other meshes considered verify all mesh conditions}.
\\
\\



\begin{figure}[h!]
\centering
\subfloat[][Quasi-uniform\\]{\includegraphics[width = 0.24\textwidth]{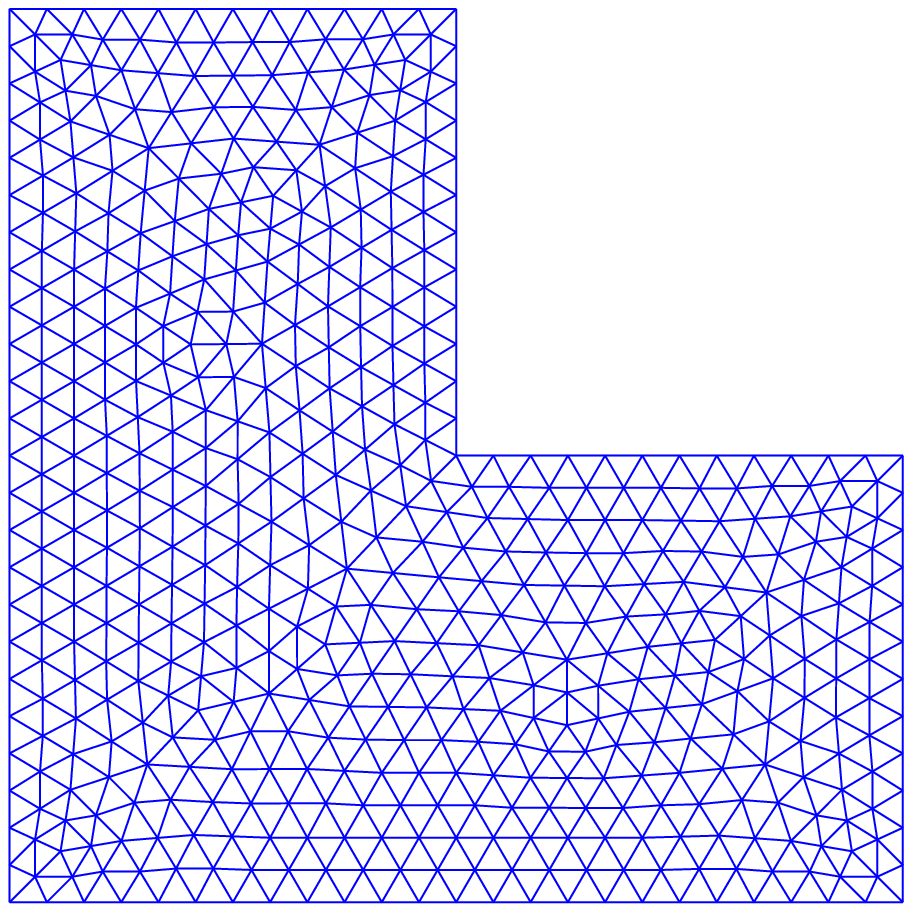}
\label{fig:unLsh}}
\subfloat[][Geometrically\\ \hphantom{(b)} graded]{\includegraphics[width = 0.24\textwidth]{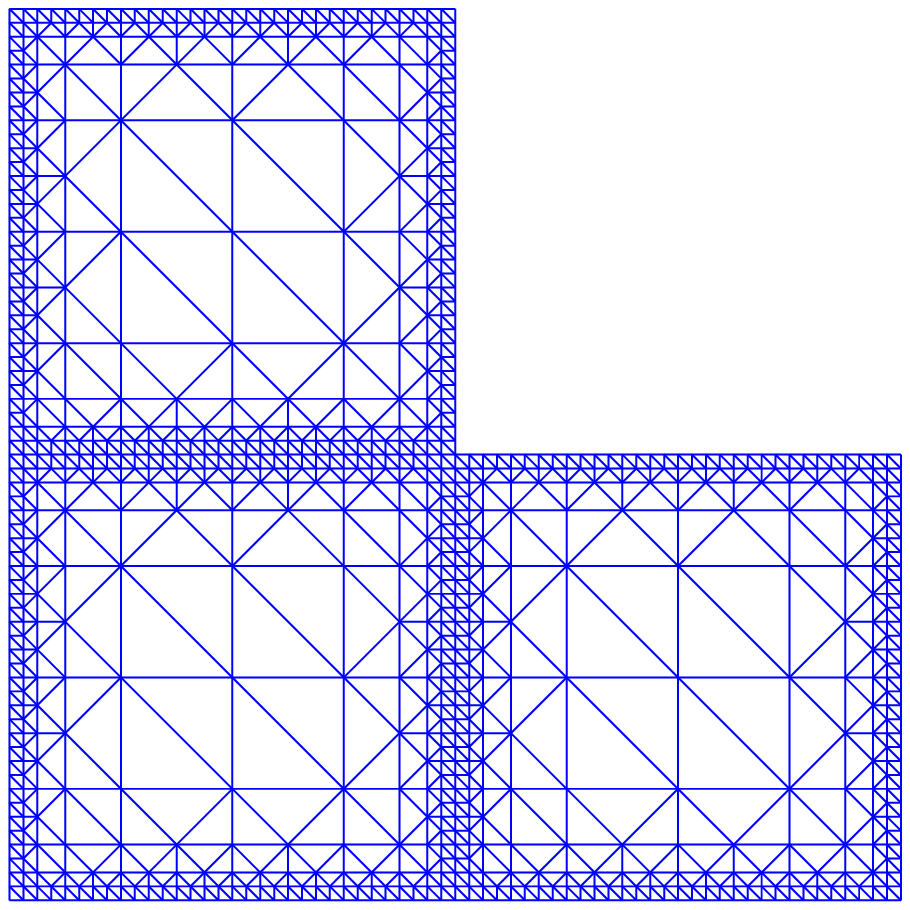}
\label{fig:ggLsh}}
\subfloat[][Algebraically \\\hphantom{(c)} $2$-graded]{\includegraphics[width = 0.24\textwidth]{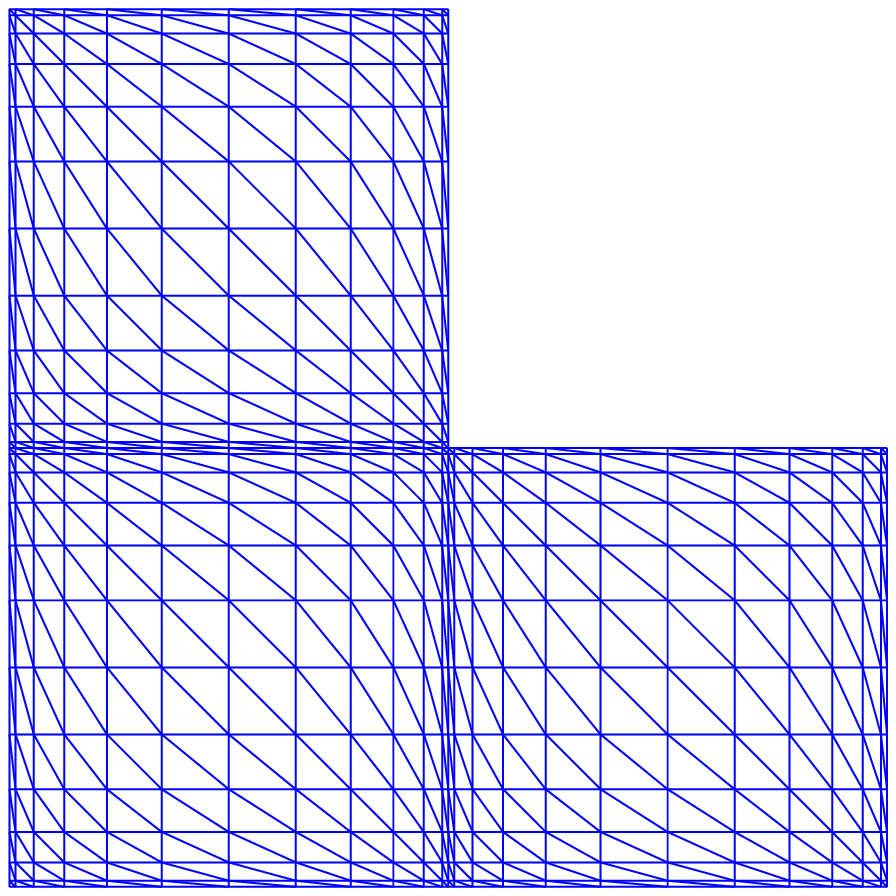}
\label{fig:agLshno}}
\subfloat[][Algebraically\\$2$-graded shape regular]{\includegraphics[width = 0.24\textwidth]{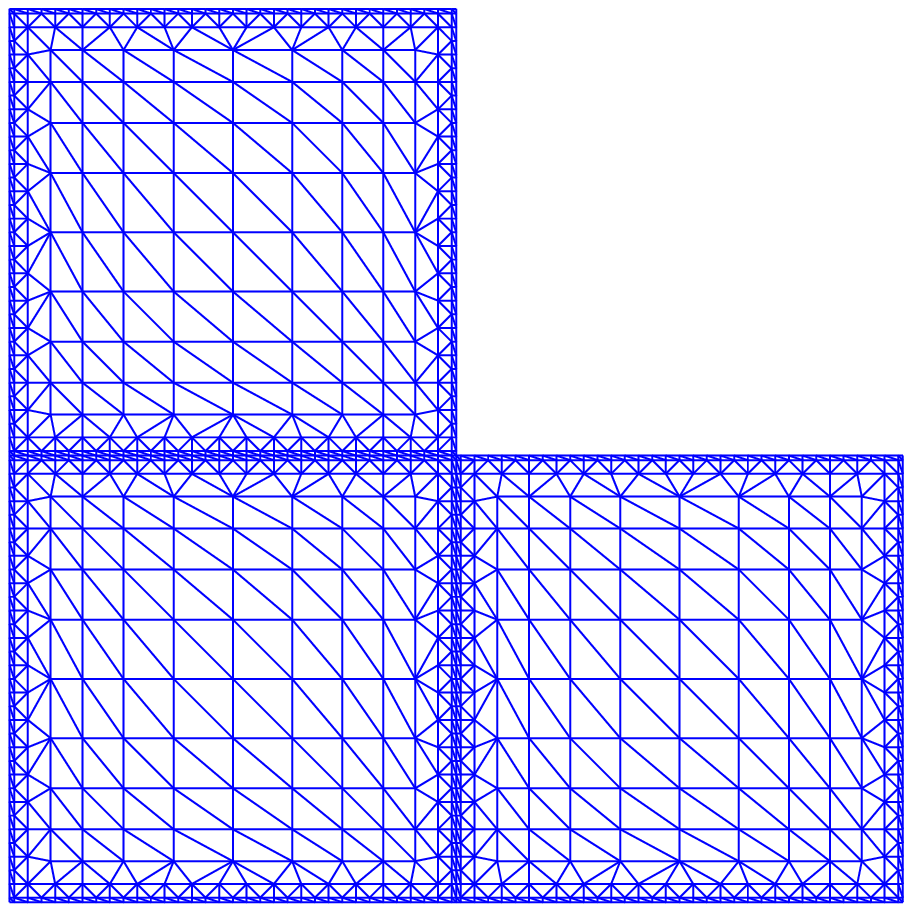}
\label{fig:agLshyes}}
\caption{Meshes used for L-shaped domain, Example~\ref{example1a}.}\label{f:graded_meshes}
\end{figure}

\vspace{-0.75cm}
\begin{table}[h!]\centering \footnotesize
\caption{Condition numbers and CG iterations on quasi-uniform meshes for L-shape 
(Figure~\ref{fig:unLsh}), Example~\ref{example1a}.\\[-0.6cm]} 
\label{tab:UN} \centering 
\begin{tabular}{|c|c c|c c|c c|c c|c c|c c|}\hline
 & \multicolumn{4}{c|}{$s=1/4$} & \multicolumn{4}{c|}{$s=1/2$} &\multicolumn{4}{c|}{$s=3/4$} \\ \cline{2-13}
\rule{0pt}{11pt} N & 
\multicolumn{2}{c|}{$\mathbf{A}$} & \multicolumn{2}{c|}{$\mathbf{P}\mathbf{A}$}
&\multicolumn{2}{c|}{$\mathbf{A}$} & \multicolumn{2}{c|}{$\mathbf{P}\mathbf{A}$}
&\multicolumn{2}{c|}{$\mathbf{A}$} & \multicolumn{2}{c|}{$\mathbf{P}\mathbf{A}$}
\\ \cline{2-13}
\rule{0pt}{11pt}& $\kappa$ & It. & $\kappa$ & It. & $\kappa$ & It. & $\kappa$ & It.& $\kappa$ & It. & $\kappa$ & It. \\ \hline
  248 & 2.35 & 15 & 1.24 & 8 &  4.00 & 16 & 1.48 &  9 &   8.90 &  23 & 2.35 & 12 \\
  992 & 2.86 & 16 & 1.27 & 8 &  8.22 & 24 & 1.58 &  9 &  26.22 &  40 & 2.68 & 13 \\
 3968 & 4.25 & 19 & 1.30 & 8 & 17.02 & 36 & 1.65 & 10 &  77.35 &  70 & 2.92 & 13 \\
15872 & 6.73 & 24 & 1.32 & 8 & 35.00 & 52 & 1.69 & 10 & 226.56 & 118 & 3.11 & 14 \\ \hline
 \end{tabular}
\end{table}

\begin{table}[h!]\centering \footnotesize
\caption{Condition numbers and CG iterations
on $2$--graded (geometrically) meshes for L-shape (Figure~\ref{fig:ggLsh}), 
Example~\ref{example1a}.\\} 
\label{tab:GG} \centering 
\begin{tabular}{|c|c c|c c|c c|c c|c c|c c|}\hline
 & \multicolumn{4}{c|}{$s=1/4$} & \multicolumn{4}{c|}{$s=1/2$} &\multicolumn{4}{c|}{$s=3/4$} \\ \cline{2-13}
\rule{0pt}{11pt} N & 
\multicolumn{2}{c|}{$\mathbf{A}$} & \multicolumn{2}{c|}{$\mathbf{P}\mathbf{A}$}
&\multicolumn{2}{c|}{$\mathbf{A}$} & \multicolumn{2}{c|}{$\mathbf{P}\mathbf{A}$}
&\multicolumn{2}{c|}{$\mathbf{A}$} & \multicolumn{2}{c|}{$\mathbf{P}\mathbf{A}$}
\\ \cline{2-13}
\rule{0pt}{11pt}& $\kappa$ & It. & $\kappa$ & It. & $\kappa$ & It. & $\kappa$ & It.& $\kappa$ 
& It. & $\kappa$ & It. \\ \hline
  288 &   4.28 &  20 & 1.24 &  8 &   7.08 &  21 & 1.51 &  9 &  14.06 &  26 & 2.36 & 13 \\
  720 &  12.53 &  34 & 1.29 &  8 &  18.65 &  34 & 1.60 & 10 &  35.02 &  38 & 2.46 & 14 \\
 1632 &  36.44 &  53 & 1.33 &  9 &  47.03 &  50 & 1.68 & 11 &  82.34 &  57 & 2.56 & 15 \\
 3504 & 105.28 &  76 & 1.37 &  9 & 114.49 &  76 & 1.75 & 11 & 185.29 &  83 & 2.67 & 15 \\
 7296 & 302.23 & 111 & 1.39 & 10 & 271.20 & 109 & 1.79 & 12 & 403.92 & 122 & 2.75 & 15 \\ 
14928 & 862.91 & 162 & 1.39 & 10 & 628.32 & 155 & 1.76 & 11 & 859.51 & 172 & 2.84 & 15\\ \hline
 \end{tabular}
\end{table}

\begin{table}[h!]\centering \footnotesize
\caption{Condition numbers and CG iterations on $2$--graded (algebraically) meshes 
for L-shape (Figure~\ref{fig:agLshno}), Example~\ref{example1a}.\\} 
\label{tab:AG} \centering 
\begin{tabular}{|@{ }c@{ }|c c@{ }|c c@{ }|c c@{ }|c c@{ }|c c@{ }|c c@{ }|}\hline
 & \multicolumn{4}{c|}{$s=1/4$} & \multicolumn{4}{c|}{$s=1/2$} &\multicolumn{4}{c|}{$s=3/4$} \\ \cline{2-13}
\rule{0pt}{11pt} N & 
\multicolumn{2}{c@{ }|}{$\mathbf{A}$} & \multicolumn{2}{c@{ }|}{$\mathbf{P}\mathbf{A}$}
&\multicolumn{2}{c@{ }|}{$\mathbf{A}$} & \multicolumn{2}{c@{ }|}{$\mathbf{P}\mathbf{A}$}
&\multicolumn{2}{c@{ }|}{$\mathbf{A}$} & \multicolumn{2}{c@{ }|}{$\mathbf{P}\mathbf{A}$}
\\ \cline{2-13}
\rule{0pt}{11pt}& $\kappa$ & It. & $\kappa$ & It. & $\kappa$ & It. & $\kappa$ & It.& $\kappa$ & It. & $\kappa$ & It. \\ \hline
  384 &  12.49 &  34 & 1.36 &  9 &   8.91 &  28 & 1.78 & 12 &   28.72 &  37 &   4.30 &  22 \\
 1536 &  41.86 &  61 & 1.64 & 10 &  21.51 &  46 & 2.81 & 16 &  146.66 &  82 &  26.52 &  46 \\
 4704 & 105.76 &  94 & 1.94 & 12 &  47.29 &  67 & 3.76 & 18 &  559.48 & 159 &  91.34 & 161 \\
16224 & 283.50 & 153 & 2.65 & 13 & 124.63 & 104 & 5.17 & 19 & 2726.63 & 486 & 695.92 & 443 \\ \hline
 \end{tabular}

\end{table}

\begin{table}[h!]\centering \footnotesize
\caption{Condition numbers and CG iterations on $2$--graded (algebraically shape regular) 
meshes for L-shape (Figure~\ref{fig:agLshyes}), Example~\ref{example1a}.\\} 
\label{tab:SRAG} \centering 
\begin{tabular}{|c|c c|c c|c c|c c|c c|c c|}\hline
 & \multicolumn{4}{c|}{$s=1/4$} & \multicolumn{4}{c|}{$s=1/2$} &\multicolumn{4}{c|}{$s=3/4$} \\ \cline{2-13}
\rule{0pt}{11pt} N & 
\multicolumn{2}{c|}{$\mathbf{A}$} & \multicolumn{2}{c|}{$\mathbf{P}\mathbf{A}$}
&\multicolumn{2}{c|}{$\mathbf{A}$} & \multicolumn{2}{c|}{$\mathbf{P}\mathbf{A}$}
&\multicolumn{2}{c|}{$\mathbf{A}$} & \multicolumn{2}{c|}{$\mathbf{P}\mathbf{A}$}
\\ \cline{2-13}
\rule{0pt}{11pt}& $\kappa$ & It. & $\kappa$ & It. & $\kappa$ & It. & $\kappa$ & It.& $\kappa$ & It. & $\kappa$ & It. \\ \hline
  528 &  13.12 &  36 & 1.28 & 8 &  12.99 &  31 & 1.67 & 11 &  25.12 &  33 & 2.64 & 15 \\
  912 &  19.15 &  44 & 1.30 & 8 &  19.78 &  37 & 1.71 & 11 &  42.33 &  43 & 2.87 & 16 \\
 2736 &  43.93 &  66 & 1.34 & 9 &  44.51 &  58 & 1.78 & 12 & 111.22 &  76 & 4.01 & 19 \\
 4920 &  63.79 &  79 & 1.36 & 9 &  67.06 &  73 & 1.79 & 12 & 183.65 &  99 & 4.22 & 19 \\
 9072 &  97.20 &  96 & 1.37 & 9 & 102.45 &  91 & 1.76 & 12 & 306.14 & 129 & 4.39 & 20 \\ 
14784 & 140.13 & 114 & 1.38 & 9 & 142.72 & 108 & 1.73 & 11 & 458.32 & 161 & 4.49 & 20 \\ \hline
 \end{tabular}
\end{table}

\begin{figure}[!ht]
\centering
\subfloat[][]{\includegraphics[width = 0.43\textwidth]{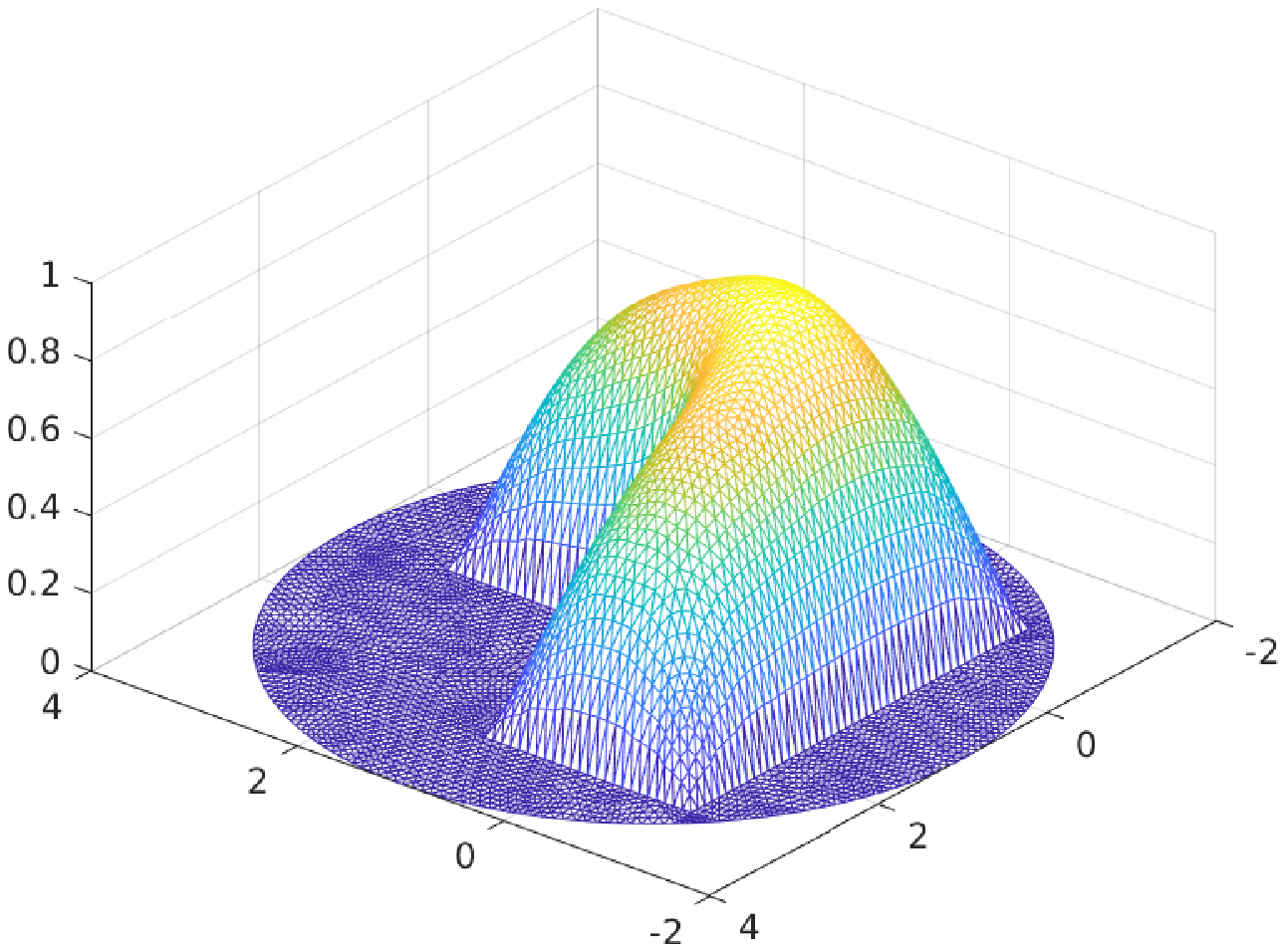}} \quad
\subfloat[][]{\includegraphics[width = 0.43\textwidth]{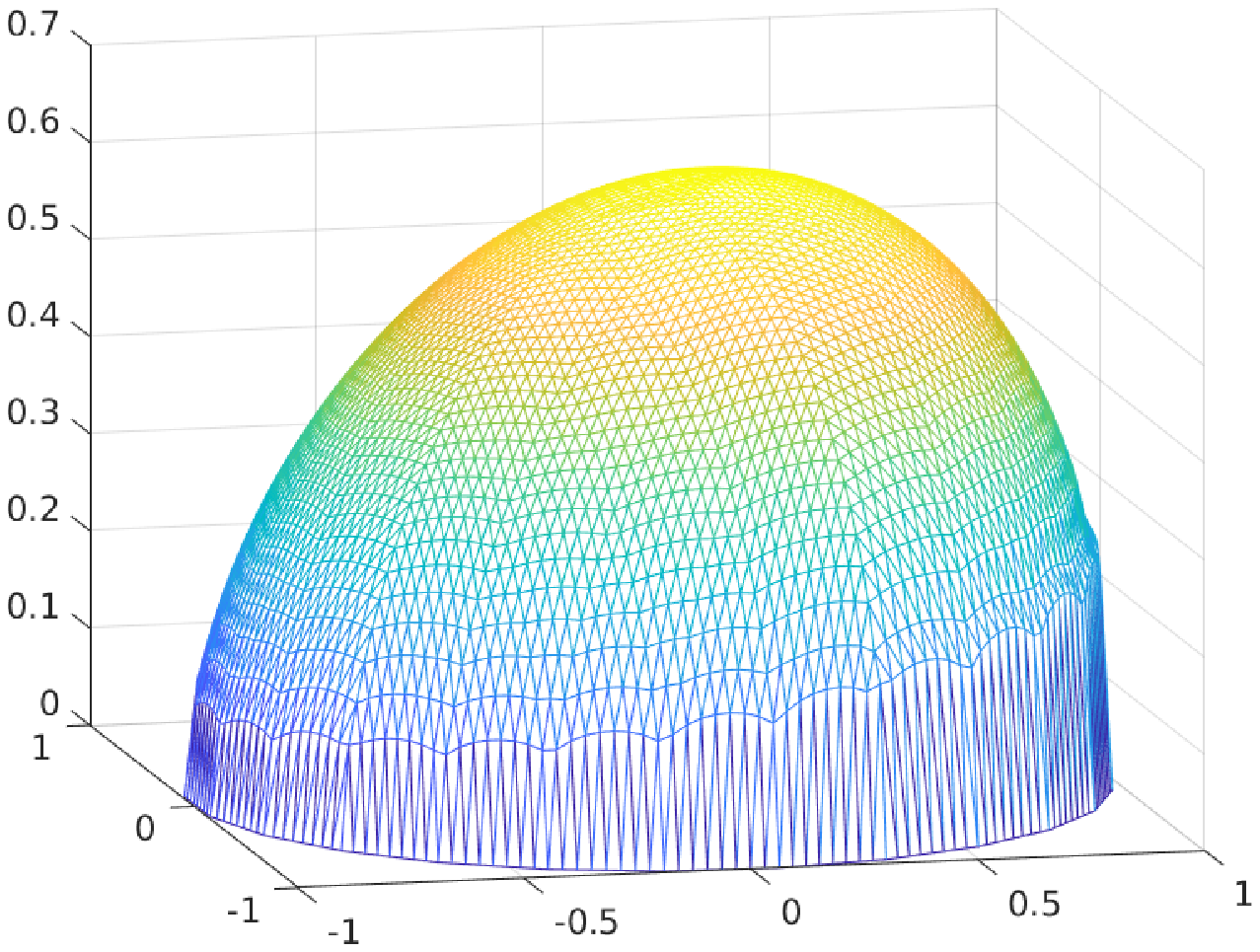}}
\caption{Numerical solutions for Example~\ref{example1a} $(a)$ and 
Example~\ref{example3} $(b)$ with $s = \frac{3}{4}$.}\label{f:Lshape}
\end{figure}

\begin{example}\label{example_rectangles}
We consider the discretization of the Dirichlet problem \eqref{e:FLweakGamma} 
with $A=(-\Delta)^{s}$ and $f = 1$ in rectangular domains $[-a,a]\times[-1,1] 
\subset \mathbb{R}^2$ with varying aspect ratio \cu{$a:1$}. We examine fractional 
exponents $s = \frac{1}{4}, \frac{1}{2}, \frac{3}{4}$ on quasi-uniform meshes, see Figure~\ref{f:mesh_rect} for illustration.
The preconditioner is computed using the radial projection $\chi$ from the 
rectangular domain to $\calB_1(0)$.
\end{example}

\begin{figure}[!h]
\centering
\subfloat[][$2:1$]{\includegraphics[width = 0.3\textwidth]{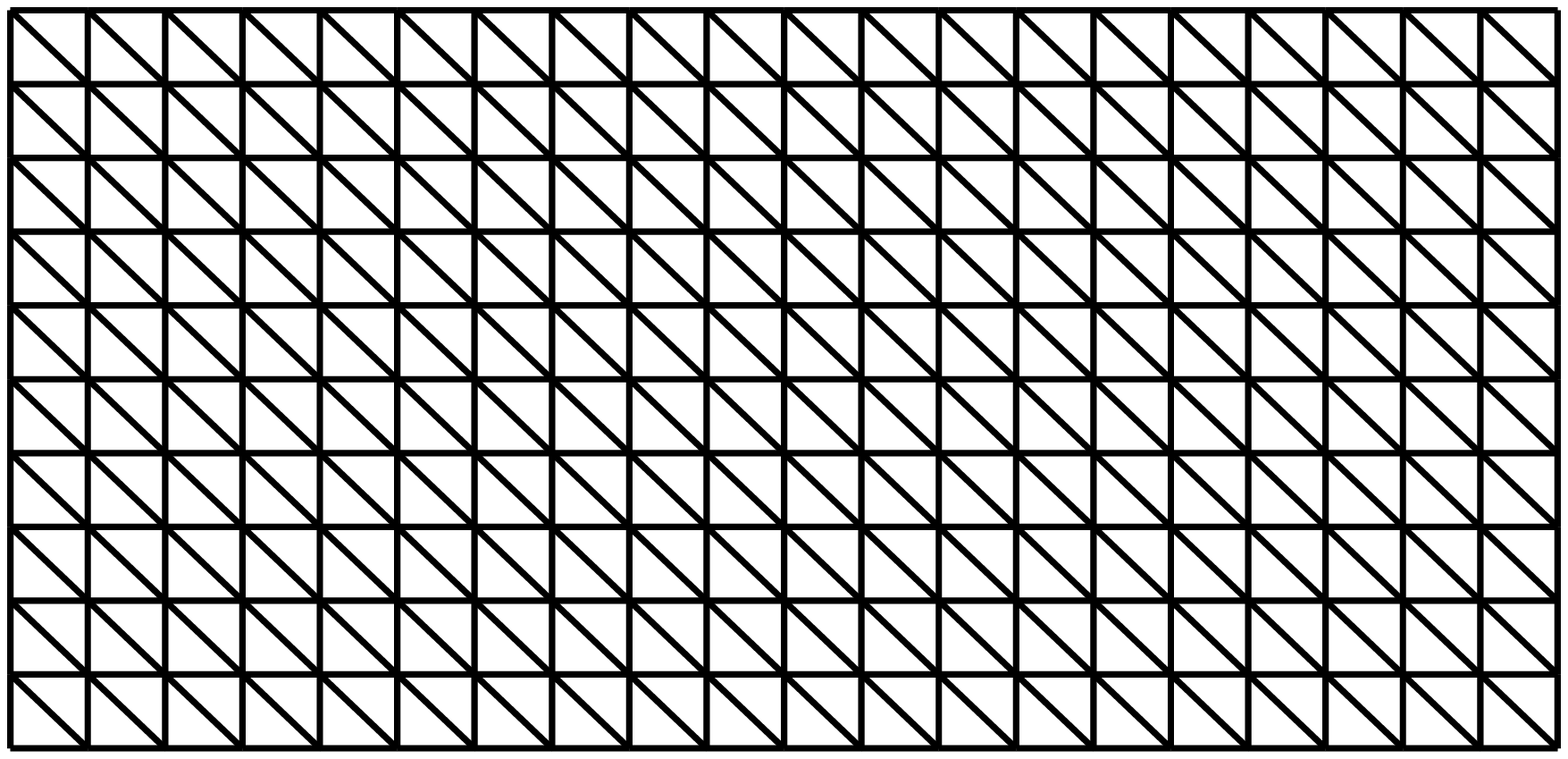}
\label{fig:meshb_rect}}
\subfloat[][$4:1$]{\includegraphics[width = 0.57\textwidth]{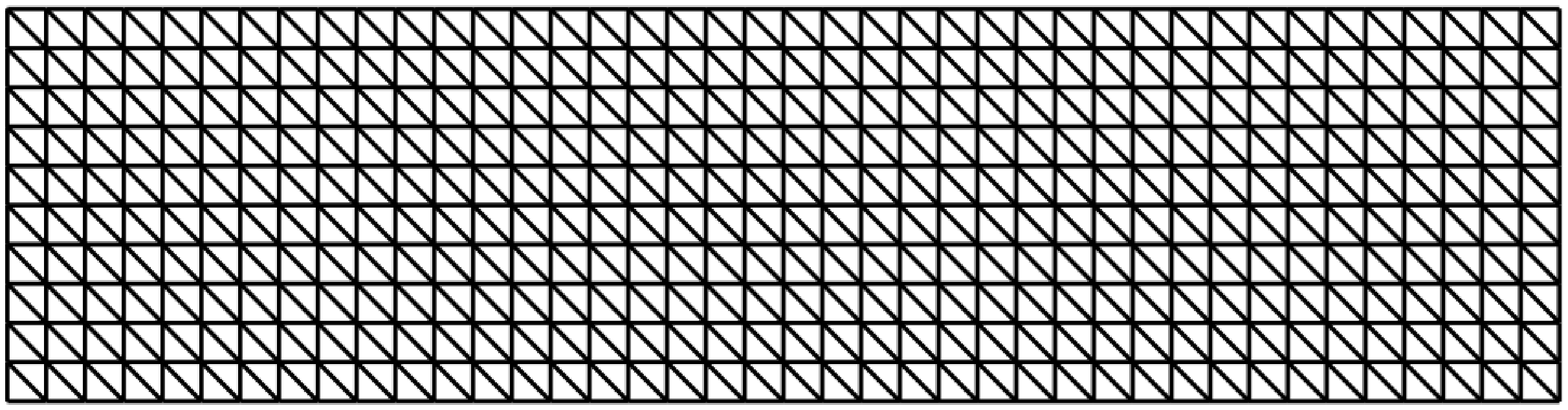}
\label{fig:meshc_rect}}
\caption{Meshes for rectangles with varied aspect ratio.}\label{f:mesh_rect}
\end{figure}

Tables~\ref{tab:s025}--\ref{tab:s075} display the results of the Galerkin matrix 
$\mathbf{A}$ and its preconditioned form $\mathbf{P}\mathbf{A}$  on a sequence 
of corresponding meshes. \cu{In most cases, the preconditioner performs 
qualitatively the same as we already observed for Example \ref{example1a}:
the condition numbers and the number of solver iterations for $\mathbf{P}
\mathbf{A}$ tend to remain constant with respect to $h$. 

The novelty here is how results change when the aspect ratio $a:1$ increases. 
Indeed, as expected from the theory, condition numbers, and consequently CG 
iteration counts, grow when the ``distortion'' from the unit disk 
is more significant,  i.e. for bigger aspect ratios. Moreover, how the 
transformation impacts condition numbers depends on the related Sobolev norms, 
reason why they are actually $s$-dependent. This is clearly reflected in our 
experiments where the difference between the results when the aspect ratio is 
$1:1$ and $16:1$ is relatively small for $s=0.25$, but notorious for $s=0.75$.
Nevertheless, as the original system for these distorted geometries are more 
ill-conditioned, $\mathbf{P}\mathbf{A}$ still reduces the number of 
iteration counts in a meaningful manner. 

Although it is hard to draw general conclusions, with these results we expect to 
convey two messages: On the one hand, we highlight the robustness of this 
preconditioning approach. On the other hand, we warn the reader that there may 
be geometries for which, \sj{despite of the quasi-uniform mesh on the original geometry}, ``the mapping trick'' from \eqref{e:b_mapped} can lead to large, yet bounded, condition numbers	
	and thereby may no longer be a practical strategy to construct a preconditioner.}

\begin{table}[h!]\centering \footnotesize
\caption{Condition numbers and CG iterations on quasi-uniform mesh with $s = 0.25$ 
and varied aspect ratio \cu{$a:1$, Example~\ref{example_rectangles}}.} 
\label{tab:s025} \centering  \setlength{\tabcolsep}{2.3pt}
\begin{tabular}{|c|c c|c c|c c|c c|c c |c c|c c|c c|c c|c c| }\hline
 &\multicolumn{4}{c|}{$1:1$} & \multicolumn{4}{c|}{$2:1$} & \multicolumn{4}{c|}{$4:1$} & \multicolumn{4}{c|}{$8:1$} & \multicolumn{4}{c|}{$16:1$} \\ \cline{2-21}
\rule{0pt}{11pt} $h$ & 
\multicolumn{2}{c|}{$\mathbf{A}$} & \multicolumn{2}{c|}{$\mathbf{P}\mathbf{A}$}
&\multicolumn{2}{c|}{$\mathbf{A}$} & \multicolumn{2}{c|}{$\mathbf{P}\mathbf{A}$}
&\multicolumn{2}{c|}{$\mathbf{A}$} & \multicolumn{2}{c|}{$\mathbf{P}\mathbf{A}$}
&\multicolumn{2}{c|}{$\mathbf{A}$} & \multicolumn{2}{c|}{$\mathbf{P}\mathbf{A}$}
&\multicolumn{2}{c|}{$\mathbf{A}$} & \multicolumn{2}{c|}{$\mathbf{P}\mathbf{A}$}
\\ \cline{2-21}
\rule{0pt}{11pt}& $\kappa$ & It. & $\kappa$ & It. & $\kappa$ & It. & $\kappa$ & It. & $\kappa$ & It. & $\kappa$ & It. & $\kappa$ & It. & $\kappa$ & It. & $\kappa$ & It. & $\kappa$ & It. \\ \hline
  $\frac{1}{5\sqrt{2}}$ &  1.90 & 12 & 1.98 & 11 & 2.06 & 12 & 2.05 & 11 & 2.22 & 12 & 2.16 & 12 & 2.30 & 13 & 2.29 & 12 & 2.34 & 14 & 2.43 & 13\\
  $\frac{1}{10\sqrt{2}}$ & 2.54 & 12 & 2.05 & 11 & 2.91 & 13 & 2.12 & 12 & 3.14 & 14 & 2.25 & 12 & 3.26 & 16 & 2.41 & 13 & 3.32 & 18 & 2.56 & 14\\
  $\frac{1}{15\sqrt{2}}$ & 3.11 & 13 & 2.09 & 12 & 3.57 & 15 & 2.15 & 12 & 3.85 & 16 & 2.28 & 12 & 4.00 & 19 & 2.45 & 13 & 4.15 & 20 & 2.62 & 14\\
  $\frac{1}{20\sqrt{2}}$ & 3.59 & 14 & 2.10 & 12 & 4.12 & 16 & 2.17 & 12 & 4.45 & 18 & 2.31 & 12 & 4.62 & 20 & 2.49 & 14 & 4.78 & 21 & 2.67 & 14 \\
 \hline
 \end{tabular}
 \\[-0.2cm]
\end{table}

\begin{table}[h!]\centering \footnotesize
\caption{Condition numbers and CG iterations on quasi-uniform mesh with 
$s = 0.50$ and varied aspect ratio \cu{$a:1$, Example~\ref{example_rectangles}}.} 
\label{tab:s050} \centering \setlength{\tabcolsep}{1.8pt}
\begin{tabular}{|c|c c|c c|c c|c c|c c |c c|c c|c c|c c|c c| }\hline
 &\multicolumn{4}{c|}{$1:1$} & \multicolumn{4}{c|}{$2:1$} & \multicolumn{4}{c|}{$4:1$} & \multicolumn{4}{c|}{$8:1$} & \multicolumn{4}{c|}{$16:1$} \\ \cline{2-21}
\rule{0pt}{11pt} $h$ & 
\multicolumn{2}{c|}{$\mathbf{A}$} & \multicolumn{2}{c|}{$\mathbf{P}\mathbf{A}$}
&\multicolumn{2}{c|}{$\mathbf{A}$} & \multicolumn{2}{c|}{$\mathbf{P}\mathbf{A}$}
&\multicolumn{2}{c|}{$\mathbf{A}$} & \multicolumn{2}{c|}{$\mathbf{P}\mathbf{A}$}
&\multicolumn{2}{c|}{$\mathbf{A}$} & \multicolumn{2}{c|}{$\mathbf{P}\mathbf{A}$}
&\multicolumn{2}{c|}{$\mathbf{A}$} & \multicolumn{2}{c|}{$\mathbf{P}\mathbf{A}$}
\\ \cline{2-21}
\rule{0pt}{11pt}& $\kappa$ & It. & $\kappa$ & It. & $\kappa$ & It. & $\kappa$ & It. & $\kappa$ & It. & $\kappa$ & It. & $\kappa$ & It. & $\kappa$ & It. & $\kappa$ & It. & $\kappa$ & It. \\ \hline
 $\frac{1}{5\sqrt{2}}$ &  4.35 & 14 & 1.81 & 11 & 5.64 & 17 & 1.92 & 11 & 6.37 & 20 & 2.25 & 12 & 6.68 & 23 & 2.74 & 13 & 6.80 & 27 & 3.41 & 15\\
 $\frac{1}{10\sqrt{2}}$ & 8.70 & 20 & 1.83 & 11 & 11.30 & 25 & 1.96 & 12 & 12.79 & 30 & 2.32 & 13 & 13.42 & 35 & 2.91 & 15 & 13.72 & 40 & 3.64 & 17\\
 $\frac{1}{15\sqrt{2}}$ & 13.07 & 25 & 1.84 & 11 & 16.98 & 31 & 1.97 & 11 & 19.22 & 37 & 2.36 & 13 & 20.19 & 45 & 2.99 & 15 & 20.63 & 49 & 3.77 & 17\\
 $\frac{1}{20\sqrt{2}}$ & 17.44 & 29 & 1.85 & 11 & 22.66 & 36 & 1.99 & 12 & 25.66 & 42 & 2.39 & 14 &  26.97 & 52 & 3.05 & 15 & 27.48 & 54 & 3.86 & 17 \\
 \hline
 \end{tabular}
 \\[-0.2cm]
\end{table}

\begin{table}[h!]\centering \footnotesize
\caption{Condition numbers and CG iterations on quasi-uniform mesh with 
$s = 0.75$ and varied aspect ratio \cu{$a:1$, Example~\ref{example_rectangles}}.} 
\label{tab:s075} \centering \setlength{\tabcolsep}{1.0pt}
\begin{tabular}{|c|c c|c c|c c|c c|c c |c c|c c|c c|c c|c c| }\hline
 &\multicolumn{4}{c|}{$1:1$} & \multicolumn{4}{c|}{$2:1$} & \multicolumn{4}{c|}{$4:1$} & \multicolumn{4}{c|}{$8:1$} & \multicolumn{4}{c|}{$16:1$} \\ \cline{2-21}
\rule{0pt}{11pt} $h$ & 
\multicolumn{2}{c|}{$\mathbf{A}$} & \multicolumn{2}{c|}{$\mathbf{P}\mathbf{A}$}
&\multicolumn{2}{c|}{$\mathbf{A}$} & \multicolumn{2}{c|}{$\mathbf{P}\mathbf{A}$}
&\multicolumn{2}{c|}{$\mathbf{A}$} & \multicolumn{2}{c|}{$\mathbf{P}\mathbf{A}$}
&\multicolumn{2}{c|}{$\mathbf{A}$} & \multicolumn{2}{c|}{$\mathbf{P}\mathbf{A}$}
&\multicolumn{2}{c|}{$\mathbf{A}$} & \multicolumn{2}{c|}{$\mathbf{P}\mathbf{A}$}
\\ \cline{2-21}
\rule{0pt}{11pt}& $\kappa$ & It. & $\kappa$ & It. & $\kappa$ & It. & $\kappa$ & It. & $\kappa$ & It. & $\kappa$ & It. & $\kappa$ & It. & $\kappa$ & It. & $\kappa$ & It. & $\kappa$ & It. \\ \hline
 $\frac{1}{5\sqrt{2}}$ &  13.16 & 22 & 1.70 & 11 & 19.06 & 30 & 1.99 & 12 & 22.13 & 37 & 3.62 & 14 & 23.23 & 44 & 11.56 & 17 & 23.60 & 53 & 39.03 & 25\\
  $\frac{1}{10\sqrt{2}}$ &  37.19 & 37 & 1.75 & 12 & 53.94 & 51 & 2.05 & 14 & 62.68 & 62 & 3.99 & 16 & 65.87 & 76 & 17.33 & 20 & 67.36 & 91 & 18.12 & 30\\
 $\frac{1}{15\sqrt{2}}$ & 68.36 & 50 & 1.81 & 13 & 99.16 & 68 & 2.11 & 15 & 115.27 & 83 & 4.21 & 17 & 121.21 & 103 & 21.58 & 22 & 123.93 & 124 & 20.09 & 34\\
   $\frac{1}{20\sqrt{2}}$ & 105.28 & 62 & 1.87 & 14 & 152.74 & 85 & 2.15 & 15 & 177.61 & 107 & 4.35 & 18 & 186.92 & 129 & 24.94 & 24 & 189.17 & 147 & 21.84 & 36\\
 \hline
 \end{tabular}
 \\[-0.2cm]
\end{table}

As a final example, we apply the  preconditioner to a non-symmetric model problem motivated by the fractional Patlak-Keller-Segel equation for chemotaxis \cite{egp}.

\begin{example}\label{example3}
We consider the discretization of the Dirichlet problem
\eqref{e:FLweakGamma} with $A=(-\Delta)^{s} + c\cdot \nabla$, $c= (0.3,0)^T$ and $f = 1$ on the unit disk $\calB_1
\subset \mathbb{R}^2$ with $s = \frac{1}{2}$, $s = \frac{7}{10}$ and $s = \frac{3}{4}$. Quasi-uniform and algebraically $2$-graded meshes are considered. A numerical solution on a uniform mesh with 7872 elements is depicted in Figure~\ref{f:Lshape}.
\end{example}

Tables~\ref{tab:1} and \ref{tab:2} display the condition numbers of the Galerkin matrix $\mathbf{A}$ and its preconditioned form $\mathbf{P}\mathbf{A}$ for the different fractional exponents on sequences of quasi-uniform meshes, and on algebraically graded meshes. The number of GMRES iterations is given for 
this non-symmetric problem.

As in the earlier examples, on both quasi-uniform and graded meshes the condition number and the number of solver iterations for $\mathbf{A}$ show a strong increase with $N$. For $\mathbf{P}\mathbf{A}$ they are bounded with a slight growth, with numbers very close to those in Example \ref{example1} for $s = \frac{7}{10}, \frac{3}{4}$. Note that for $s=\frac{1}{2}$ the gradient term is of the same order as $(-\Delta)^{s}$.



\begin{table}[h!]\centering \footnotesize
\caption{Condition numbers and GMRES iterations 
on quasi-uniform mesh,  Example~\ref{example3}.\\} 
\label{tab:1} \centering 
\begin{tabular}{|c|c c|c c|c c|c c|c c |c c|}\hline
 &\multicolumn{4}{c|}{$s=1/2$} & \multicolumn{4}{c|}{$s=7/10$} & \multicolumn{4}{c|}{$s=3/4$} \\ \cline{2-13}
\rule{0pt}{11pt} N & 
\multicolumn{2}{c|}{$\mathbf{A}$} & \multicolumn{2}{c|}{$\mathbf{P}\mathbf{A}$}
&\multicolumn{2}{c|}{$\mathbf{A}$} & \multicolumn{2}{c|}{$\mathbf{P}\mathbf{A}$}
&\multicolumn{2}{c|}{$\mathbf{A}$} & \multicolumn{2}{c|}{$\mathbf{P}\mathbf{A}$}
\\ \cline{2-13}
\rule{0pt}{11pt}& $\kappa$ & It. & $\kappa$ & It. & $\kappa$ & It. & $\kappa$ & It. & $\kappa$ & It. & $\kappa$ & It. \\ \hline
 123 &  3.11 & 14 & 1.08 & 12 &   6.69 & 17 & 1.48 & 11 &   8.11 & 18 & 1.49 & 11\\
 492 &  7.02 & 22 & 1.15 & 12 &  20.39 & 29 & 1.50 & 11 &  26.59 & 32 & 1.53 & 11\\
1968 & 15.08 & 35 & 1.19 & 12 &  60.87 & 48 & 1.54 & 11 &  85.93 & 55 & 1.71 & 11\\
7872 & 31.85 & 54 & 1.22 & 13 & 172.73 & 83 & 1.77 & 11 & 264.01 & 95 & 2.15 & 12\\ \hline
 \end{tabular}
 \\[-0.2cm]
\end{table}

\begin{table}[h!]\centering \footnotesize
\caption{Condition numbers and GMRES iterations 
on graded mesh,  Example~\ref{example3}.} 
\label{tab:2} \centering 
\begin{tabular}{|c|c c|c c|c c|c c|c c |c c|}\hline
 & \multicolumn{4}{c|}{$s=1/2$} & \multicolumn{4}{c|}{$s=7/10$} & \multicolumn{4}{c|}{$s=3/4$} \\ \cline{2-13}
\rule{0pt}{11pt} N & 
\multicolumn{2}{c|}{$\mathbf{A}$} & \multicolumn{2}{c|}{$\mathbf{P}\mathbf{A}$}
&\multicolumn{2}{c|}{$\mathbf{A}$} & \multicolumn{2}{c|}{$\mathbf{P}\mathbf{A}$}
&\multicolumn{2}{c|}{$\mathbf{A}$} & \multicolumn{2}{c|}{$\mathbf{P}\mathbf{A}$}
\\ \cline{2-13}
\rule{0pt}{11pt}& $\kappa$ & It. & $\kappa$ & It. & $\kappa$ & It. & $\kappa$ & It. & $\kappa$ & It. & $\kappa$ & It. \\ \hline
  123 &   3.31 & 19 & 1.17 & 12 &   4.42 &  17 & 1.70 & 12 &   5.07 &  18 & 1.93 & 12\\
 1068 &  14.24 & 31 & 1.26 & 12 &  27.78 &  36 & 2.39 & 14 &  33.07 &  38 & 2.91 & 15\\
 4645 &  44.15 & 54 & 1.34 & 12 & 104.49 &  69 & 2.84 & 15 & 131.43 &  79 & 3.64 & 16\\
13680 & 101.41 & 73 & 1.37 & 12 & 277.05 & 103 & 2.96 & 15 & 358.78 & 117 & 3.87 & 16\\ \hline
 \end{tabular}
\end{table}


\pagebreak


\appendix

\section{\rcu{Proof of Results for Operator Preconditioning on Adaptive Meshes}}
\label{app:Adaptivity}

For the sake of presentation, we dedicate the next two subsections to briefly 
summarize some key concepts about adaptivity and the mesh conditions we 
need to fulfill for stability. Finally, we combine these preliminaries to state 
and and prove the new results on operator preconditioning in adaptively refined 
meshes.

\subsection{Adaptivity preliminaries}
We begin by reminding the reader of some of the concepts introduced in 
Section~\ref{sec:adaptive}. 
Given an initial triangulation $\mathcal{T}_h^{(0)}$, the adaptive 
Algorithm~\ref{alg:Adaptive} generates a sequence $\mathcal{T}_h^{(\ell)}$ of 
triangulations based on error indicators $\eta^{(\ell)}(\tau),\; \tau \in 
\mathcal{T}_h^{(\ell)}$, a refinement criterion and a refinement rule, by 
following the established sequence of steps:
\begin{equation*}
\mathrm{SOLVE}\to\mathrm{ESTIMATE}\to\mathrm{MARK}\to\mathrm{REFINE}.
\end{equation*}

There are different refinement rules that one can choose for the step REFINE. 
We now present some of the most common ones: red refinement, green 
refinement, and red-green refinement. 

\begin{definition}
\label{def:red-ref}
Let $\mathcal{T}_h^{(\ell)}$ be a triangulation. A triangle $\tau \in \mathcal{T
}_h^{(\ell)}$ is \textbf{red refined} by connecting edge midpoints of $\tau$, 
thus splitting $\tau$ into 4 similar triangles.
\end{definition}

\begin{definition}
\label{def:green-ref}
Let $\mathcal{T}_h^{(\ell)}$ be a triangulation. A triangle $\tau \in \mathcal{T
}_h^{(\ell)}$ is \textbf{green refined} by connecting an edge midpoint 
with the opposite vertex of $\tau$, thus splitting $\tau$ into 2 triangles.
\end{definition}

Next, in order to define a red-green refinement, we introduce two related 
properties.

\begin{definition}
\label{def:1-reg}
\begin{enumerate}[label=\alph*)]
 \item A triangulation $\mathcal{T}_h^{(\ell)}$ is called \textbf{$1$--irregular} 
 if the property
\begin{equation*}
\vert \operatorname{lev}(\tau_k) - \operatorname{lev}(\tau_m) \vert \leq 1,
\end{equation*}
holds for any pair of triangles $\tau_k, \tau_m \in \mathcal{T}_h^{(\ell)}$ such 
that $\tau_k \cap \tau_m \neq \emptyset$. 

Here $\operatorname{lev}(\tau_k)$ 
corresponds to the number of refinement steps required to generate $\tau_k$ from 
the initial triangulation $\mathcal{T}_h^{(0)}$.\\

\item The \textbf{$2$--neighbour rule}: Red refine any triangle $\tau_k$ with $2$ 
neighbours that have been red refined. Two triangles are neighbours, if they have a common edge.
\end{enumerate}
\end{definition}

\begin{definition}
 \label{def:red-green}
 A \textbf{Red-green refinement} for a triangulation $\mathcal{T}_h^{(\ell)}$ proceeds as follows:
\begin{enumerate}
\item Remove edges from any triangles that have been green refined.
\item All marked triangles are red refined.
\item Any triangles with 2 or more red refined neighbours are red refined, by 
$2$--neighbour rule.
\item Any triangles that do not fulfil $1$--irregularity rule are further refined.
\item Any triangles with hanging nodes generated during the refinement are green 
refined.
\end{enumerate}
\end{definition}

For further description of the refinement rules, we refer to \cite{bsw,jg}.

\begin{figure}
\center
\includegraphics[width = .95\textwidth]{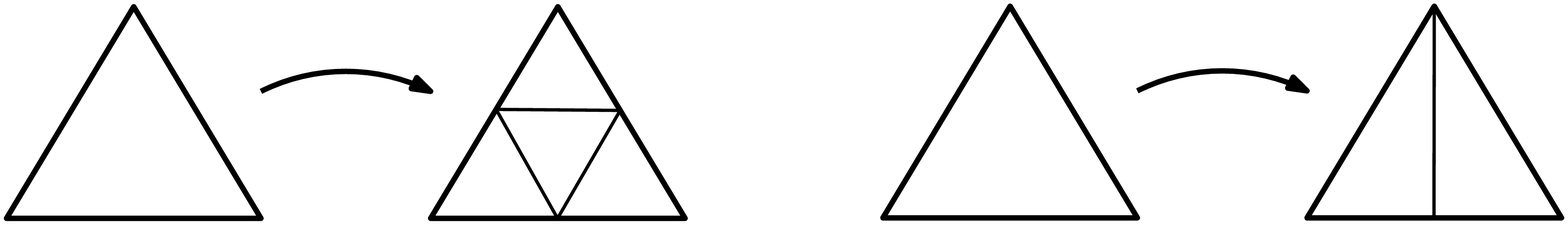}
\caption{Example of red refined triangle (left) and green refined triangle (right).}
\end{figure}

\begin{figure}
\center
\includegraphics[width = .6\textwidth]{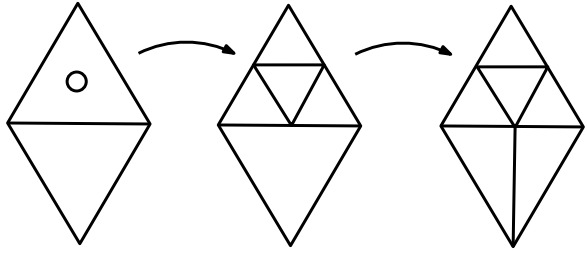}
\caption{Example of a sequence of red-green refinement. Top element is marked by $(o)$ and therefore is red refined in the first step. Bottom triangle then has a hanging node and is green refined in the consequent step.}
\end{figure}

\subsection{Mesh conditions}\label{app:conditions}
We recall that we aim to show \eqref{eq:infsupdh}, i.e.
\begin{equation*}
\underset{\varphi_h\in\mathbb{W}_{h}}{\mathrm{sup}} \dfrac{\bd(v_h, \varphi_h)}
{\|\varphi_h\|_{H^{-s}(\Omega)}} \geq \cu{\beta_{\bd}} \|v_h\|_{\widetilde{H}^s
(\Omega)}, \qquad \text{for all } v_h \in \widetilde{\mathbb{V}}_{h},
\end{equation*}
(see Section~\ref{sec:precond} for notation).

In the case of the discretizations based on dual meshes, this inf-sup stability 
is a consequence of three regularity conditions on the triangulation $\mathcal{T}_h$,
see \cite[Chapters 1--2]{s2}. We now proceed to introduce some notation to 
properly summarize this result. 

\rcu{Let $\mathcal{T}_h$ be a triangulation of $\Omega \subset \mathbb{R}^n$}.
For each triangle $\tau_k \in\mathcal{T}_h$ we define its \textbf{area} $\Delta_{
k} :=\int_{\tau_{k}} dx$; its \textbf{local element size} $h_{k} :=\Delta_{k}^{
1/ n}$; and its \textbf{diameter} $d_{k} :=\sup _{x, y \in \tau_{k}}|x-y|$. 

Let $\varphi_j$ be a piecewise linear basis function \rcu{in the span of 
$\widetilde{\mathbb{V}}_{h}$}. We write $\omega_j := \operatorname{supp}  \, 
\varphi_j $ and define its associated local mesh size $\hat{h}_j$ as
\begin{align*}
\hat{h}_j &:= \frac{1}{\# I(j)} \sum_{m \in I(j)} h_{m}.
\end{align*} 
Here,
$I(j) :=\left\{m \in\{1, \ldots, \#\mathcal{T}_h\} : \tau_{m} \cap \omega_{j} \neq 
\emptyset\right\}, \text{ for } j=1, \ldots, N,$ is the index set of triangles 
$\tau_m\in \mathcal{T}_h$ where the basis function $\varphi_j$ is not identically 
zero.\\

\begin{definition}\label{def:MC}
 For a triangulation $\mathcal{T}_h$, we define the following mesh conditions
\begin{itemize}
\item[$(C1)$] Shape regularity: \rcu{t}here exists $c_R > 0$ such that for all 
$\tau_k \in \mathcal{T}_h$
\begin{equation*}
0<c_R<\frac{h_{k}}{d_{k}}<1.
\end{equation*}
\item[$(C2)$] Local quasi-uniformity\rcu{: f}or all $\tau_k, \tau_m \in \mathcal{
T}$ with $\tau_k \cap \tau_m \neq \emptyset$
\begin{equation*}
\frac{h_k}{h_m} \leq c_L,
\end{equation*}
\rcu{with $C_L$ a (uniform) positive constant.}
\item[$(C3)$] Local $s$-dependent condition: \rcu{there exists $c_0>0$ such 
that f}or all $\tau \in \mathcal{T}_h$
\begin{equation*}
\frac{51}{7}-\sqrt{\sum_{j \in J(m)} \hat{h}_{j}^{2 s} \sum_{j \in J(m)} \hat{h
}_{j}^{-2 s}} \geq c_{0}>0,
\end{equation*}
with $J(m) :=\left\{i \in\{1, \ldots, N\} :  \omega_{i}\cap \tau_{m} \neq 
\emptyset\right\}$ for $m=1,\ldots, \#\mathcal{T}_h$, the index set of basis 
functions $\varphi_i$ which are \rcu{not identically zero} on triangle $\tau_m$.
\end{itemize} 
\end{definition}

\begin{thm}[{\cite[{Theorems~2.1 and 2.2}]{s}}]
\label{thm:dinfsupOlaf}
Let $\mathcal{T}_h$ be a triangulation of $\Omega$ such that $(C1), (C2)$ and 
$(C3)$ are satisfied. Consider the primal-dual discretization 
$\widetilde{\mathbb{V}}_{h}= \mathbb{S}^1(\mathcal{T}_h)\cap\widetilde{H}^{s}
(\Omega)$ and $\mathbb{W}_{h}=\mathbb{S}^0(\check{\mathcal{T}}_h)$ for $0\leq s\leq 1$ 
(see subsection~\ref{ssec:discretization}).

Then, the discrete inf-sup condition \eqref{eq:infsupdh} holds with a positive 
constant $\beta_{\bd}$ independent of $h$.
\end{thm}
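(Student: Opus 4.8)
The plan is to recast the discrete inf--sup condition \eqref{eq:infsupdh} as a uniform stability estimate for a generalized $L^2$-projection and to obtain that estimate by interpolation between $L^2$ and $H^1$. Let $\widetilde{Q}_h : L^2(\Omega)\to\widetilde{\mathbb{V}}_h$ be defined by $\langle\widetilde{Q}_h u,w_h\rangle_\Omega=\langle u,w_h\rangle_\Omega$ for all $w_h\in\mathbb{W}_h$, exactly as in \eqref{eq:wQh}, and let $Q_h':L^2(\Omega)\to\mathbb{W}_h$ be its transpose, $\langle v_h,Q_h'u\rangle_\Omega=\langle v_h,u\rangle_\Omega$ for all $v_h\in\widetilde{\mathbb{V}}_h$. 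By \eqref{eq:dim}, both operators are well defined as soon as the rectangular Gram matrix $\mathbf{D}_{ij}=\langle\psi_i,\phi_j\rangle_\Omega$ is invertible, and a standard duality argument (cf.~\cite{s,s2}) shows that \eqref{eq:infsupdh} with an $h$-independent constant is equivalent to the uniform bound $\|\widetilde{Q}_h\|_{\mathcal{L}(\widetilde{H}^s(\Omega))}\le C$ (equivalently $\|Q_h'\|_{\mathcal{L}(H^{-s}(\Omega))}\le C$). So it suffices to prove that $\widetilde{Q}_h$ is well defined and bounded on $\widetilde{H}^s(\Omega)$ for $0\le s\le 1$, uniformly over the class of meshes obeying $(C1)$--$(C3)$.

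First I would treat well-posedness and the endpoint $s=0$. Choosing the dual basis $\{\phi_j\}$ on $\check{\mathcal{T}}_h$ so that, after scaling rows and columns by the local element sizes $h_k$, the matrix $\mathbf{D}$ is a perturbation of a fixed invertible (essentially diagonal) matrix, I would bound the off-diagonal mass by a Gershgorin-type argument over vertex patches. Conditions $(C1)$ and $(C2)$ control the cardinality and the size ratios of the cells meeting a patch, while $(C3)$ --- with its explicit threshold $\frac{51}{7}$ --- is precisely what makes the scaled $\mathbf{D}$ strictly diagonally dominant, hence invertible with condition number bounded independently of $h$. Combined with the spectral equivalence of the $\mathbb{S}^1$-mass matrix with $\operatorname{diag}(h_k^n)$ (shape regularity), the same estimate yields $\|\widetilde{Q}_h u\|_{L^2(\Omega)}\le C\,\|u\|_{L^2(\Omega)}$.

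Next I would upgrade this to $H^1$. Let $I_h$ be a Cl\'ement/Scott--Zhang quasi-interpolant onto $\widetilde{\mathbb{V}}_h$ with the local estimates $\|u-I_h u\|_{L^2(\tau)}\le c\,h_\tau\,|u|_{H^1(\omega_\tau)}$ and $\|I_h u\|_{H^1(\Omega)}\le c\,\|u\|_{H^1(\Omega)}$, valid under $(C1)$--$(C2)$. Since $\widetilde{Q}_h I_h u=I_h u$, write $\widetilde{Q}_h u=I_h u+\widetilde{Q}_h(u-I_h u)$ and use the element-wise inverse inequality on $\widetilde{\mathbb{V}}_h$ together with the localized $L^2$-stability:
\begin{align*}
|\widetilde{Q}_h(u-I_h u)|_{H^1(\Omega)}^2
&\le c\sum_\tau h_\tau^{-2}\,\|\widetilde{Q}_h(u-I_h u)\|_{L^2(\tau)}^2 \\
&\le c\sum_\tau h_\tau^{-2}\,\|u-I_h u\|_{L^2(\omega_\tau)}^2
\le c\,|u|_{H^1(\Omega)}^2 ,
\end{align*}
so that $\|\widetilde{Q}_h u\|_{H^1(\Omega)}\le C\,\|u\|_{H^1(\Omega)}$ with $C$ independent of $h$. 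Finally, since $\widetilde{Q}_h$ is uniformly bounded on $\widetilde{H}^0(\Omega)=L^2(\Omega)$ and on $\widetilde{H}^1(\Omega)$, the interpolation identity $[\widetilde{H}^0(\Omega),\widetilde{H}^1(\Omega)]_s=\widetilde{H}^s(\Omega)$ for $0\le s\le 1$ (the $H^s_{00}$-scale, with the customary care at $s=\frac{1}{2}$) gives $\|\widetilde{Q}_h\|_{\mathcal{L}(\widetilde{H}^s(\Omega))}\le C$, hence \eqref{eq:infsupdh} with $\beta_{\bd}=C^{-1}$.

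I expect the main obstacle to be the second and third steps: establishing the $L^2$- and $H^1$-stability of $\widetilde{Q}_h$ \emph{uniformly} over the class of only locally quasi-uniform meshes admitted by $(C1)$--$(C3)$. One must carry out every estimate with the local element sizes $h_\tau$ rather than a single global $h$, and must track how the constant $c_0$ from $(C3)$ --- and through it the diagonal-dominance bound for $\mathbf{D}$ --- propagates into the $L^2$-stability constant, since this is ultimately what fixes $\beta_{\bd}$. This is the content of the estimates in \cite{s}; the $H^1$-stability of such a generalized $L^2$-projection is itself the fundamental question studied in \cite{by,cc,s}.
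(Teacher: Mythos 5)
The paper itself does not prove Theorem~\ref{thm:dinfsupOlaf}; it is quoted verbatim from Steinbach \cite[Theorems~2.1 and 2.2]{s} and used as a black box, so there is no in-paper proof to measure your attempt against.

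Your sketch is a plausible reconstruction that identifies the right logical skeleton --- reduce \eqref{eq:infsupdh} to a uniform bound on the Petrov--Galerkin projection $\widetilde Q_h$, prove $L^2$- and $H^1$-stability, then interpolate --- but it is not the route taken in \cite{s}. Steinbach argues essentially at the matrix level: he establishes spectral equivalences between the rescaled Gram matrix $\mathbf D$ and diagonal matrices of local mesh weights $\hat h_j^{\pm 2s}$, working with $\hat h_j$-weighted discrete norms that are equivalent to the $H^s$-norms on $\widetilde{\mathbb V}_h$ for $|s|\le 1$; the constant $51/7$ in $(C3)$ is exactly the threshold of his diagonal-dominance estimate. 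Interpolation enters only implicitly through the choice of weights, not through a Cl\'ement operator and the $[L^2,H^1]_s$ scale. Your route, if it can be completed, is arguably more conceptual; Steinbach's is more elementary and makes the role of $(C3)$ transparent.

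The step that is actually not justified in your sketch is the localized $L^2$-bound $\sum_\tau h_\tau^{-2}\,\|\widetilde Q_h(u-I_hu)\|_{L^2(\tau)}^2 \le c\sum_\tau h_\tau^{-2}\,\|u-I_hu\|_{L^2(\omega_\tau)}^2$. The operator $\widetilde Q_h$ is global ($\mathbf D^{-1}$ is dense), and on the graded and adaptively refined meshes this theorem is invoked for in Section~\ref{sec:adaptive}, the local sizes $h_\tau$ vary over the mesh by unbounded factors, so the global $L^2$-stability of step one does not transfer element-by-element. What you need is an exponential decay estimate for $\mathbf D^{-1}$ away from the diagonal in terms of the graph distance on $\mathcal T_h$, uniformly under $(C1)$--$(C3)$; that is precisely what the diagonal dominance you invoke in step one is there to buy, and it is the technical core of the theorem rather than something to be cited in passing. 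You flag this obstacle yourself in the closing paragraph --- correctly so --- but as written the displayed chain of inequalities asserts it without proof.
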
%

\subsection{Results on adaptively refined meshes}
\rcu{Now we turn our attention to study these conditions for a sequence of 
adaptive triangulations generated by Algorithm~\ref{alg:Adaptive}. For this, 
we write the constants from conditions (C1), (C2) and (C3) associated to a 
triangulation $\mathcal{T}_h^{(\ell)}$ as $c_R^{(\ell)}, c_L^{(\ell)}$ and 
$c_0^{(\ell)}$, respectively.

The next Lemma is the complete version of Lemma~\ref{lem:Adaptive} introduced 
in Section~\ref{sec:adaptive}.
}

\begin{lemma}\label{l:Adaptive}
Consider an \rcu{initial} triangulation $\mathcal{T}_h^{(0)}$ \rcu{satisfying 
the mesh conditions from Definition~\ref{def:MC}, and such that its local 
quasi-uniformity constant \rcu{$c_L^{(0)}$} verifies}
\begin{equation}\label{eq:c0Cond}
 c_L^{(0)} \leq \frac{1}{2}\sqrt[4 \vert s \vert]{\frac{1129}{49}} \approx 
\frac{2.19^{1/\vert s \vert}}{2}.
\end{equation}

\rcu{Let $\Xi:= \lbrace \mathcal{T}_h^{(\ell)}\rbrace_{\ell\in \N}$ be a family of meshes
generated from $\mathcal{T}_h^{(0)}$ by the adaptive refinement described in 
Algorithm~\ref{alg:Adaptive}, using red-green refinements.}
Then (C1), (C2) and (C3) hold for all \rcu{$\mathcal{T}_h^{(\ell)} \in 
\Xi$} for some constants 
$c_R, c_L, c_0>0$, which are independent of $\ell \in \mathbb{N}$. 

In particular, the inf-sup condition \eqref{eq:infsupdh} holds for 
$\vert s \vert \leq 1$, $\widetilde{\mathbb{V}}_{h}= \mathbb{S}^p(\mathcal{T}_h)
\cap\widetilde{H}^{s}(\Omega)$, $\mathbb{W}_{h}=\mathbb{S}^q(\mathcal{T}_h^{\prime})$, and for all $\mathcal{T}_h^{(\ell)}$ independent of $\ell$.
\end{lemma}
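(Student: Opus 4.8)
The plan is to argue by induction on the refinement level $\ell$, showing that each of the mesh conditions $(C1)$, $(C2)$ and $(C3)$ of Definition~\ref{def:MC} is inherited by $\mathcal{T}_h^{(\ell+1)}$ from $\mathcal{T}_h^{(\ell)}$ with constants that stay away from the degenerate regime and depend only on $c_R^{(0)}$, $c_L^{(0)}$, $c_0^{(0)}$ and $s$. Once this is in place, the last assertion is immediate: by Theorem~\ref{thm:dinfsupOlaf} (and its counterparts for the other admissible degree pairs $(p,q)$, cf.~\cite{s2}), the uniform validity of $(C1)$--$(C3)$ along $\Xi$ yields \eqref{eq:infsupdh} with $\beta_{\bd}$ independent of $\ell$; for $-1\leq s\leq 0$ one invokes in addition the duality argument of the footnote in Section~\ref{sec:adaptive}.

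For $(C1)$ and $(C2)$ I would use only the geometry of red--green refinement. A red refinement splits a triangle into four similar copies, so $h_k/d_k$ is invariant, while a green refinement produces two triangles whose shape regularity is bounded below by a universal function of the parent's. The decisive point is that, by Definition~\ref{def:red-green} (step~1 removes all green edges before any further refinement), every triangle at any level is a red-refinement descendant of a triangle of $\mathcal{T}_h^{(0)}$, possibly with one terminal green split; hence $h_k/d_k\geq c_R>0$ with $c_R$ depending only on $c_R^{(0)}$. For $(C2)$, if $\tau_k\cap\tau_m\neq\emptyset$ then the $1$--irregularity rule forces $\lvert\operatorname{lev}(\tau_k)-\operatorname{lev}(\tau_m)\rvert\leq 1$; writing $\tau_k$, $\tau_m$ as red (plus at most one green) descendants of adjacent or equal triangles of $\mathcal{T}_h^{(0)}$, each red step scales the local element size by $\tfrac12$ and the green step by a universal factor, so $h_k/h_m\leq c_L$ with $c_L$ a universal multiple of $c_L^{(0)}$, uniformly in $\ell$.

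The essential difficulty, and where I expect the real work to lie, is $(C3)$. Since the basis functions spanning $\widetilde{\mathbb{V}}_h$ are vertex hat functions, $J(m)$ consists of the (at most three) vertices of $\tau_m$, so with $Q_m:=\max_{j,k\in J(m)}\hat{h}_j/\hat{h}_k\geq 1$ one has, for either sign of $s$,
\begin{equation*}
\sqrt{\sum_{j\in J(m)}\hat{h}_j^{2s}\sum_{j\in J(m)}\hat{h}_j^{-2s}}\;\leq\;3\,Q_m^{\lvert s\rvert}.
\end{equation*}
It thus suffices to bound $Q_m$ uniformly in $m$ and $\ell$. Each $\hat{h}_j$ is an average of local element sizes over the fixed combinatorial patch $I(j)$ of the vertex $j$, and the crux is to show, using the $1$--irregularity rule, the $2$--neighbour rule and shape regularity of the descendants, that for two vertices $j$, $k$ of the same triangle $\tau_m$ the refinement levels of all triangles entering the averages $\hat{h}_j$, $\hat{h}_k$ differ by at most one; combined with the local quasi-uniformity of $\mathcal{T}_h^{(0)}$ this gives $Q_m\leq 2\,c_L^{(0)}$.

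Granting $Q_m\leq 2c_L^{(0)}$, the hypothesis \eqref{eq:c0Cond} is equivalent to $(2c_L^{(0)})^{\lvert s\rvert}\leq(1129/49)^{1/4}$, so
\begin{equation*}
\sqrt{\sum_{j\in J(m)}\hat{h}_j^{2s}\sum_{j\in J(m)}\hat{h}_j^{-2s}}\;\leq\;3\,(2c_L^{(0)})^{\lvert s\rvert}\;\leq\;3\,(1129/49)^{1/4}\;<\;\tfrac{51}{7},
\end{equation*}
the last inequality amounting to $49\cdot 1129<17^4$. Hence $(C3)$ holds along all of $\Xi$ with $c_0:=\tfrac{51}{7}-3(1129/49)^{1/4}>0$, completing the induction. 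The one genuinely non-routine ingredient is the level-spread estimate behind $Q_m\leq 2c_L^{(0)}$; I would establish it by a case analysis organized according to how many edge-neighbours of a given element have been red refined, which is precisely the configuration controlled by the $2$--neighbour and $1$--irregularity rules of Definition~\ref{def:red-green}. \qed
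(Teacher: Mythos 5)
Your overall structure (induction on $\ell$; geometric arguments for $(C1)$, $(C2)$; patch-size estimate for $(C3)$) mirrors the paper. However, there is a genuine gap in the $(C3)$ part, and it stems from a mismatch between the two constants you use.

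You bound $\bigl(\sum_{j\in J(m)}\hat h_j^{2s}\sum_{j\in J(m)}\hat h_j^{-2s}\bigr)^{1/2}\leq 3Q_m^{|s|}$, which is correct but crude (all nine ratio terms estimated by the largest). The paper instead observes that the diagonal contributes $3$, three of the off-diagonal ratios are $\leq 1$, and by rearrangement the remaining three are $\leq Q_m^{2|s|}+1+Q_m^{-2|s|}$, giving the sharper bound $\sqrt{7+2Q_m^{2|s|}}$. To compensate for your weaker estimate, you then claim $Q_m\leq 2c_L^{(0)}$. But the paper's inductive argument only yields $Q_m\leq (2c_L^{(0)})^2$: the quantities $\hat h_j$ and $\hat h_k$ are averages over the vertex patches $I(j)$ and $I(k)$, and to compare an element touching $\omega_j$ with an element touching $\omega_k$ one must apply local quasi-uniformity twice (once to reach the shared triangle $\tau_m$, once to go out again), hence a factor $(c_L)^2$, not $c_L$. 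Your claimed level-spread property, that all elements in $\omega_j\cup\omega_k$ have refinement levels within one of each other, does not follow from $1$-irregularity alone, which only controls level differences for elements that actually intersect; two elements that intersect $\tau_m$ but not each other can be two levels apart, and on top of that the initial sizes of their $\mathcal{T}_h^{(0)}$-ancestors can already differ by a factor $(c_L^{(0)})^2$. With the correct $Q_m\leq(2c_L^{(0)})^2$, your crude bound requires $3\bigl((2c_L^{(0)})^2\bigr)^{|s|}<\tfrac{51}{7}$, i.e.~$(2c_L^{(0)})^{2|s|}<\tfrac{17}{7}\approx 2.43$, whereas hypothesis \eqref{eq:c0Cond} only gives $(2c_L^{(0)})^{2|s|}\leq\sqrt{1129/49}\approx 4.8$. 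So your chain of inequalities does not close; the sharper $\sqrt{7+2Q_m^{2|s|}}$ bound on the $(C3)$ quantity is essential, not an optimization.

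A smaller point: for the base case $\ell=0$ the paper uses the even smaller $Q_m\leq(c_L^{(0)})^2$ (no factor of $2$), and the factor $2$ only enters in the inductive step via $c_L^{(\ell)}\leq 2c_L^{(0)}$, itself a consequence of $1$-irregularity. Your sketch does not clearly distinguish base case from inductive step in the $(C3)$ estimate. Finally, your treatment of $(C1)$ and $(C2)$ is consistent with the paper: red refinement preserves shape regularity exactly, green refinement degrades it by at most a universal factor (the paper uses $1/\sqrt{2}$) and does not compound because green edges are removed before further refinement, and $1$-irregularity gives $c_L^{(\ell)}\leq 2c_L^{(0)}$.
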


\begin{proof}
The proof proceeds by induction on $\ell$. By hypothesis, the initial 
triangulation $\mathcal{T}_h^{(0)}$ satisfies  $(C1)$ and $(C2)$. 
Therefore, for the initial triangulation 
$\mathcal{T}_h^{(0)}$ we only need to check $(C3)$. 

For the sake of convenience, let us re-label the basis functions
$j \in J(m)$ by $m_i$, with $i=1,\ldots, \#J(m)$. We note that $\max_{m} 
\#J(m)=3$ and that this is our worst case scenario. Therefore, it suffices 
to verify  $(C3)$ in this case:
\begin{equation*}
\frac{51}{7}-\sqrt{\sum_{i = 1}^3 \hat{h}_{m_i}^{2 s} \sum_{i=1}^3 \hat{h}_{
m_i}^{-2 s}} \geq c_{0}>0.
\end{equation*}
Without loss of generality, let $\hat{h}_{m_1} \geq \hat{h}_{m_2} \geq 
\hat{h}_{m_3}$.
Then 
\begin{equation*}
\begin{split}
\sum_{i = 1}^3 \hat{h}_{m_i}^{2 s} \sum_{i=1}^3 \hat{h}_{m_i}^{-2 s}  &= 
\textstyle{3+ \left(\frac{\hat{h}_{m_1}}{\hat{h}_{m_2}} \right)^{2|s|} + 
\left(\frac{\hat{h}_{m_2}}{\hat{h}_{m_3}} \right)^{2|s|} + \left(\frac{\hat{h}_{
m_3}}{\hat{h}_{m_1}} \right)^{2|s|} }\\ & \qquad+\textstyle{ \left(\frac{\hat{h
}_{m_1}}{\hat{h}_{m_3}} \right)^{2|s|} + \left(\frac{\hat{h}_{m_2}}{\hat{h}_{m_1}} 
\right)^{2|s|} + \left(\frac{\hat{h}_{m_3}}{\hat{h}_{m_2}} \right)^{2|s|} }\\
&\leq 3+\textstyle{2\left(  \left(\frac{\hat{h}_{m_1}}{\hat{h}_{m_3}} \right)^{
2|s|} + \left(\frac{\hat{h}_{m_2}}{\hat{h}_{m_2}} \right)^{2|s|} + \left(\frac{
\hat{h}_{m_3}}{\hat{h}_{m_1}} \right)^{2|s|} \right)  \leq 7+2\left(\frac{\hat{
h}_{m_1}}{\hat{h}_{m_3}} \right)^{2|s|},}\\
\end{split}
\end{equation*}
where we use the rearrangement inequality. We conclude that $(C3)$ is satisfied 
for $\mathcal{T}_h^{(0)}$ provided that
\begin{equation}\label{e:h1h3bound}
 \textstyle{\left(\frac{\hat{h}_{m_1}}{\hat{h}_{m_3}} \right)^{2|s|} < \frac{1129}{49}.}
\end{equation} 
A simple calculation using the mesh conditions yields $\frac{\hat{h}_{m_1}}{
\hat{h}_{m_3}} \leq (c_L^{(0)})^2$, so that \eqref{e:h1h3bound} holds and $(C3)$ 
is satisfied for $\mathcal{T}_h^{(0)}$.\\
For the inductive step, assume that conditions $(C1)$--$(C3)$ are satisfied on 
an adaptively refined triangulation $\mathcal{T}_h^{(\ell)}$ using red-green 
refinements subject to $1$--irregularity and $2$--neighbour rules. In order to 
generate a new triangulation $\mathcal{T}_h^{(\ell+1)}$, the appropriate triangles 
are marked.

We note that red-refinement does not change the shape regularity constant, but
green refinement worsens the shape regularity constant by at most a factor of 
$\frac{1}{\sqrt{2}}$. However, due to the removal of green edges, the constant 
does not degenerate as $\ell \to \infty$. Thus condition $(C1)$ is satisfied 
with $c_R^{(\ell+1)} \geq \frac{1}{\sqrt{2}} c_R^{(0)}$ for $\mathcal{T}_h^{(\ell
+1)}$.

Condition $(C2)$ remains satisfied due to the $1$--irregularity condition 
in the refinement procedure. This restriction guarantees that $\frac{h_i}{h_j} 
\leq c_L^{(\ell+1)} \leq 2 c_L^{(0)}$.

As for the initial triangulation $\mathcal{T}_h^{(0)}$, we know that condition 
$(C3)$ is satisfied for $\mathcal{T}_h^{(\ell+1)}$ when \eqref{e:h1h3bound} holds.
Due to the $1$--irregularity condition, we have that $\frac{\hat{h}_{m_1}}{\hat{
h}_{m_3}} \leq (2 c_L^{(0)})^2$, so the estimate \eqref{e:h1h3bound} is satisfied 
provided 
\js{$c_L^{(0)} < \frac{1}{2}{\left( \frac{1129}{49} \right)}^{{1}/{4 \vert s 
\vert}}$.}

We conclude that $(C1)$, $(C2)$, $(C3)$ are satisfied for $\lbrace \mathcal{T}_h^{
(\ell)} \rbrace_{\ell=0}^{\infty}$ independently of $\ell$.
\rcu{\qed}
\end{proof}
\begin{remark}
\begin{enumerate}[label=\alph*)]
 \item We note that the estimates in Lemma~\ref{l:Adaptive} are not sharp. Still, 
the local quasi-uniformity assumption on the initial triangulation $\mathcal{T
}^{(0)}$ becomes more restrictive as $\vert s \vert$ increases. Thus, the initial
mesh needs to be of increasingly higher regularity for higher values of $|s|$.

\item Let $\Gamma \rcu{\subset} \mathbb{R}^n$ \rcu{be} a polyhedral domain which 
satisfies an interior
cone condition. Then the assumptions in Lemma~\ref{l:Adaptive} can be satisfied 
for a sufficiently fine  $\mathcal{T}_h^{(0)}$.
\end{enumerate}
\end{remark}

\begin{remark}
\jak{Similar results can be shown for alternative refinement strategies, such as 
the newest vertex bisection \rc{\cite[Section 2.2]{dirk_nvb}}. See \cite[Chapter 4]{jsthesis} for details.}
\end{remark}

\section{Proof of Proposition~\ref{prop:blowup}}
\label{app:blowup}

The idea for the proof is like in \cite{CHR02} where the case $\mathbb{W}_{h}= 
\widetilde{\mathbb{V}}_{h}$ is shown. Here we generalize the proof to different 
discrete test and trial space. For the sake of brevity we will discuss the case 
when $s\in(1/2,1]$ and remark that the proof for $s\in[-1,-1/2)$ follows 
analogously. We remind the reader that in this setting $\widetilde{H}^s(\Omega)
\equiv H^s_0(\Omega)\neq H^s(\Omega)$, but that $\Vert u \Vert_{\widetilde{H
}^s(\Omega)} \equiv \Vert u \Vert_{{H}^s(\Omega)}, \: \forall u \in \widetilde{
H}^s(\Omega)$. 

Let $\mathcal{T}_h$, $\mathbb{S}^p(\mathcal{T}_h), p\in\N$ be as in 
Section~\ref{sec:precond}. Moreover, we recall that for this setting we consider 
the finite element spaces $\widetilde{\mathbb{V}}_{h}=\mathbb{S}^1(\mathcal{T}_h
)\cap\widetilde{H}^{s}(\Omega)$ and $\mathbb{W}_{h}\subset {H}^{-s}(\Omega)$.
Additionally, we denote $\mathbb{V}_{h}=\mathbb{S}^1(\mathcal{T}_h)\subset
{H}^{s}(\Omega)$ and note that $\widetilde{\mathbb{V}}_{h}\subset \mathbb{V}_{h
}$. Indeed, $\widetilde{\mathbb{V}}_{h}$ is the space of affine continuous 
functions that are zero on the boundary, while $\mathbb{V}_{h}$ is analogous to 
$\widetilde{\mathbb{V}}_{h}$, but admits non-zero values on $\partial \Omega$.

Let us introduce the generalized $L^2$-projection $\widetilde{Q}_h:L^2(\Omega)
\to \widetilde{\mathbb{V}}_{h}$ for a given $u\in L^2(\Omega)$, as the 
solution of the variational problem
\begin{equation}
\dual{\widetilde{Q}_h u}{ \psi_h}_{\Omega} = \dual{u}{\psi_h}_{\Omega}, 
\quad \forall \psi_h \in \mathbb{W}_{h}.
\end{equation}
From \cite[Chapter~2]{s2}, \cite{hut}, we know that it satisfies
 \begin{equation}
  \Vert \widetilde{Q}_h u \Vert_{\widetilde{H}^s(\Omega)} \leq 
  \cu{\beta_{\bd}^{-1}} \Vert u \Vert_{\widetilde{H}^s(\Omega)}, \qquad \forall 
  u\in \widetilde{H}^s(\Omega).
 \end{equation} 
where $\cu{\beta_{\bd}}$ is the inf-sup constant from 
\eqref{eq:infsupdh}.

Given that we are interested in the case where we have a space mismatch, i.e. 
when $u \in H^s(\Omega)$ but $u \notin \widetilde{H}^s(\Omega)$, we 
additionally prove the following:
\begin{lemma}\label{lem:L2projVh}
The projection $\widetilde{Q}_h$ satisfies
 \begin{equation}
\Vert \widetilde{Q}_h u_h \Vert_{H^s(\Omega)} \leq ( 1 + \cu{\frac{c_2}{s-{1/2}
}}\,h^{1/2-s} )\Vert u_h \Vert_{H^s(\Omega)}, \qquad \forall u_h\in \mathbb{V
}_{h},
\end{equation} 
with $c_2>0$ and independent of $h$.
\end{lemma}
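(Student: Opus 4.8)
The plan is to reduce the $H^s$-bound to an $L^2$-estimate on the layer of elements meeting $\partial\Omega$, exploiting that $s>\tfrac12$ makes the trace of $u_h$ well defined. First I would record that $\widetilde{Q}_h$ restricts to the identity on $\widetilde{\mathbb{V}}_h$: for $v_h\in\widetilde{\mathbb{V}}_h$ one has $\langle\widetilde{Q}_hv_h-v_h,\psi_h\rangle_\Omega=0$ for all $\psi_h\in\mathbb{W}_h$, and since $\dim\widetilde{\mathbb{V}}_h=\dim\mathbb{W}_h$ (see \eqref{eq:dim}) and the inf--sup condition \eqref{eq:infsupdh} makes this pairing nondegenerate, $\widetilde{Q}_hv_h=v_h$. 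Hence, for every $v_h\in\widetilde{\mathbb{V}}_h$,
\begin{equation*}
\widetilde{Q}_hu_h-u_h=\widetilde{Q}_h(u_h-v_h)-(u_h-v_h)\in\mathbb{S}^1(\mathcal{T}_h),
\end{equation*}
and by the triangle inequality it suffices to bound $\|\widetilde{Q}_hu_h-u_h\|_{H^s(\Omega)}$ by $\tfrac{c_2}{s-1/2}\,h^{1/2-s}\|u_h\|_{H^s(\Omega)}$.

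Next I would pass from $H^s$ to $L^2$ using the inverse estimate $\|w_h\|_{H^s(\Omega)}\le C_{\mathrm{inv}}\,h^{-s}\|w_h\|_{L^2(\Omega)}$, valid for $w_h\in\mathbb{S}^1(\mathcal{T}_h)$ and $0\le s\le 1$ on shape-regular, locally quasi-uniform meshes (conditions $(C1)$--$(C2)$ of Definition~\ref{def:MC}), with $C_{\mathrm{inv}}$ uniform for $s$ bounded away from $0$. For the $L^2$-norm I would use that the primal--dual pair $(\widetilde{\mathbb{V}}_h,\mathbb{W}_h)$ is inf--sup stable in $L^2\times L^2$ with a mesh-independent constant $\gamma_0>0$ (the $s=0$ instance of Theorem~\ref{thm:dinfsupOlaf}): since $\widetilde{Q}_hu_h-v_h\in\widetilde{\mathbb{V}}_h$ and its $L^2$-pairing against $\mathbb{W}_h$ coincides with that of $u_h-v_h$, Cauchy--Schwarz yields $\|\widetilde{Q}_hu_h-v_h\|_{L^2(\Omega)}\le\gamma_0^{-1}\|u_h-v_h\|_{L^2(\Omega)}$ and therefore
\begin{equation*}
\|\widetilde{Q}_hu_h-u_h\|_{H^s(\Omega)}\le C_{\mathrm{inv}}(1+\gamma_0^{-1})\,h^{-s}\inf_{v_h\in\widetilde{\mathbb{V}}_h}\|u_h-v_h\|_{L^2(\Omega)}.
\end{equation*}

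The heart of the argument is the choice of $v_h$ and the boundary-layer estimate. I would take $v_h$ equal to $u_h$ at all interior vertices of $\mathcal{T}_h$ and $0$ at the vertices on $\partial\Omega$, so that $v_h\in\widetilde{\mathbb{V}}_h$ and $u_h-v_h$ is supported on the one-element-wide strip $\Omega_h:=\bigcup\{\tau\in\mathcal{T}_h:\overline\tau\cap\partial\Omega\neq\emptyset\}$. On each such $\tau$, $u_h-v_h$ is the affine function carrying the boundary nodal values of $u_h$, so by norm equivalence on the reference element, scaling, and $(C1)$--$(C2)$,
\begin{equation*}
\|u_h-v_h\|_{L^2(\Omega)}^2\le C\sum_{\tau\subset\Omega_h}h_\tau^{2}\sum_{p\in\overline\tau\cap\partial\Omega}|u_h(p)|^2\le C'\,h\,\|u_h\|_{L^2(\partial\Omega)}^2,
\end{equation*}
the last step re-summing the vertex values over the induced $1$D boundary mesh. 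Since $s>\tfrac12$, the trace is bounded, $\|u_h\|_{L^2(\partial\Omega)}\le C_{\mathrm{tr}}(s)\,\|u_h\|_{H^s(\Omega)}$, with the trace constant degenerating at the endpoint as $C_{\mathrm{tr}}(s)\le C\,(s-\tfrac12)^{-1/2}\le C\,(s-\tfrac12)^{-1}$ (from the Sobolev embedding $H^s\hookrightarrow L^\infty$ near a boundary point, or a scaled trace inequality). Inserting the last two displays into the previous one gives $\|\widetilde{Q}_hu_h-u_h\|_{H^s(\Omega)}\le c_2\,(s-\tfrac12)^{-1}h^{1/2-s}\|u_h\|_{H^s(\Omega)}$, and the lemma follows.

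The main obstacle is tracking the $s$-dependence uniformly near $s=\tfrac12$: one must check that $C_{\mathrm{inv}}$, $\gamma_0^{-1}$ and the constants in the boundary-layer estimate remain bounded as $s\downarrow\tfrac12$, so that the only singular factor is $C_{\mathrm{tr}}(s)$, and then that this factor is indeed $O((s-\tfrac12)^{-1})$. A secondary point is that the inverse estimate and the boundary-layer bound rely precisely on the shape regularity and local quasi-uniformity $(C1)$--$(C2)$ already imposed on $\mathcal{T}_h$; on strongly graded meshes one should read $h$ as the local mesh size near $\partial\Omega$ and invoke the corresponding local inverse estimate.
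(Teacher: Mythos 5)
Your proof follows essentially the same route as the paper: pick the auxiliary function $u_h^0$ (your $v_h$) that matches $u_h$ at interior vertices and vanishes on $\partial\Omega$, bound $\|u_h-u_h^0\|_{L^2(\Omega)}\lesssim h^{1/2}\|u_h\|_{L^2(\partial\Omega)}$ on the boundary layer, invoke the trace inequality with constant of order $(s-\tfrac12)^{-1}$, and finish with the inverse estimate $h^{-s}$ and the triangle inequality. The one place you do more work is in passing from $\|\widetilde{Q}_h u_h-u_h\|_{L^2}$ to $\|u_h-u_h^0\|_{L^2}$: the paper simply asserts these are \emph{equal} ``by definition,'' which is not literally true for a Petrov--Galerkin projection, whereas you correctly insert the $L^2$-stability of $\widetilde{Q}_h$ (the $s=0$ inf--sup of Theorem~\ref{thm:dinfsupOlaf}) together with the observation that $\widetilde{Q}_h$ is the identity on $\widetilde{\mathbb{V}}_h$, picking up an additional bounded factor $(1+\gamma_0^{-1})$ that is absorbed into $c_2$; this is a genuine (if small) improvement in rigor over the paper's own write-up, not a different method.
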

\begin{proof}
Set $u^0_h \in \widetilde{\mathbb{V}}_{h}$ to be the function defined by
\begin{equation}
 u^0_h := \begin{cases}
              u_h, \quad \text{ in all interior nodes},\\
              0, \qquad \text{on }\partial \Omega.
             \end{cases}
\end{equation}
Then, by definition
\begin{align*}
 \Vert u_h - \widetilde{Q}_h u_h \Vert_{L^2(\Omega)} &=\Vert u_h - u_h^0 \Vert_{
 L^2(\Omega)}\leq h^{1/2}\Vert u_h \Vert_{L^2(\partial\Omega)},
\end{align*}
where the last inequality holds by basic computations (\textit{c.f.} 
\cite[Equation 1.3.27]{CHR02}).

From the trace theorem, we have that $\Vert u_h \Vert_{L^2(\partial\Omega)} 
\leq \dfrac{c_{tt}}{s-1/2} \Vert u_h \Vert_{H^s(\Omega)}$, \cu{with $c_{tt}>0$ 
independent of $h$.}

Therefore, combining all the above, we obtain
\begin{align*}
 \Vert \widetilde{Q}_h u_h \Vert_{H^s(\Omega)} &\leq \Vert u_h \Vert_{H^s(\Omega)}
 + \Vert \widetilde{Q}_h u_h - u_h \Vert_{H^s(\Omega)}\\
 &\leq \Vert u_h \Vert_{H^s(\Omega)}
 + \cu{c_1} h^{-s}\Vert \widetilde{Q}_h u_h - u_h \Vert_{L^2(\Omega)}\\
 &\leq
 \left( 1 + \frac{\cu{c_1 c_{tt}}}{s-1/2}h^{1
  /2-s} \right)\Vert u_h \Vert_{H^s(\Omega)}.
\end{align*}
\rcu{\qed}
\end{proof}

Now, let us also introduce the finite element space $\widetilde{\mathbb{W}}_{h}
\subset \widetilde{H}^{-s}(\Omega)$. We consider the generalized $L^2$-projection 
$\widetilde{P}_h:L^2(\Omega)\to \widetilde{\mathbb{W}}_{h}$ for a given $\varphi
\in L^2(\Omega)$, as the solution of the variational problem
\begin{equation}
\dual{\widetilde{P}_h \varphi}{ v_h}_{\Omega} = \dual{\varphi}{v_h}_{\Omega}, 
\quad \forall v_h \in \mathbb{V}_{h}.
\end{equation}
Then, in analogy with Lemma~\ref{lem:L2projVh}, we have that
\begin{lemma}
The projection $\widetilde{P}_h$ satisfies
 \begin{equation}
  \Vert \widetilde{P}_h \Phi_h \Vert_{H^{-s}(\Omega)} \leq c_3( 1 + \cu{\frac{c_2}{s-{1/2}
}}\,h^{1/2
  -s} )\Vert \Phi_h \Vert_{H^{-s}(\Omega)}, \qquad \forall 
  \Phi_h\in \mathbb{W}_{h},
 \end{equation} 
  with $c_2,c_3>0$ and independent of $h$.
\end{lemma}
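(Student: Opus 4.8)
This is the dual counterpart of Lemma~\ref{lem:L2projVh}, and the plan is to mirror that proof with the roles of trial and test spaces interchanged, i.e.\ to argue by duality. Since $H^{-s}(\Omega)=(\widetilde{H}^s(\Omega))'$ and $\widetilde{P}_h\Phi_h\in\widetilde{\mathbb{W}}_h\subset L^2(\Omega)$,
\begin{equation*}
\Vert\widetilde{P}_h\Phi_h\Vert_{H^{-s}(\Omega)}\leq\Vert\Phi_h\Vert_{H^{-s}(\Omega)}+\Vert\Phi_h-\widetilde{P}_h\Phi_h\Vert_{H^{-s}(\Omega)}\ ,
\end{equation*}
so the ``$1$'' in the asserted constant is accounted for and everything reduces to estimating the defect $\Phi_h-\widetilde{P}_h\Phi_h$. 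I would first introduce $\Phi_h^0\in\widetilde{\mathbb{W}}_h$, the function obtained from $\Phi_h$ by zeroing the degrees of freedom attached to the (dual) cells that meet $\partial\Omega$. Because the dimension count (and the companion inf-sup estimate) force $\widetilde{P}_h$ to restrict to the identity on $\widetilde{\mathbb{W}}_h$, and because $\widetilde{P}_h$ is $L^2$-stable under the mesh conditions of Appendix~\ref{app:conditions} (see \cite{s2,s}), quasi-optimality yields $\Vert\Phi_h-\widetilde{P}_h\Phi_h\Vert_{L^2(\Omega)}\leq c\,\Vert\Phi_h-\Phi_h^0\Vert_{L^2(\Omega)}$, and the right-hand side is the $L^2$-mass of $\Phi_h$ on a boundary strip of width $\mathcal{O}(h)$, hence $\leq c\,h^{1/2}\Vert\Phi_h\Vert_{L^2(\partial\Omega)}$ by the elementary estimate already used in Lemma~\ref{lem:L2projVh} (\textit{c.f.}~\cite[Equation 1.3.27]{CHR02}).

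It then remains to convert this $L^2(\Omega)$-defect into an $H^{-s}(\Omega)$-bound. Here I would test $\Phi_h-\widetilde{P}_h\Phi_h$ against $w\in\widetilde{H}^s(\Omega)$ and use that, by construction, $\Phi_h-\widetilde{P}_h\Phi_h$ is $L^2$-orthogonal to $\mathbb{V}_h=\mathbb{S}^1(\mathcal{T}_h)$, so that for the Scott--Zhang interpolant $I_hw\in\mathbb{V}_h$
\begin{equation*}
\dual{\Phi_h-\widetilde{P}_h\Phi_h}{w}_\Omega=\dual{\Phi_h-\widetilde{P}_h\Phi_h}{w-I_hw}_\Omega\leq\Vert\Phi_h-\widetilde{P}_h\Phi_h\Vert_{L^2(\Omega)}\,\Vert w-I_hw\Vert_{L^2(\Omega)}\ ,
\end{equation*}
with $\Vert w-I_hw\Vert_{L^2(\Omega)}\leq c\,h^{s}\Vert w\Vert_{\widetilde{H}^s(\Omega)}$ since $\mathbb{V}_h$ is unconstrained and $s\in(1/2,1]$. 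Bounding $\Vert\Phi_h\Vert_{L^2(\partial\Omega)}$ by an inverse/trace estimate that carries the $(s-1/2)^{-1}$ constant of the fractional trace theorem (exactly as in Lemma~\ref{lem:L2projVh}) and collecting the resulting powers of $h$ then gives $\Vert\Phi_h-\widetilde{P}_h\Phi_h\Vert_{H^{-s}(\Omega)}\leq\tfrac{c_2}{s-1/2}\,h^{1/2-s}\Vert\Phi_h\Vert_{H^{-s}(\Omega)}$, whence the claim with a suitable $c_3$.

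The step I expect to be delicate is the boundary term. In contrast to the piecewise-linear setting of Lemma~\ref{lem:L2projVh}, functions in $\widetilde{\mathbb{W}}_h$ are only piecewise constant and carry no $L^2(\partial\Omega)$ trace in the classical sense, so the continuous trace theorem cannot be applied verbatim; one has to substitute a discrete trace (inverse) inequality on the boundary cells while still extracting the $(s-1/2)^{-1}$ behaviour, and then splice this cleanly into the negative-order duality step. A secondary technicality is to verify the admissibility of all pairings and embeddings and, above all, that the $L^2$-stability and quasi-optimality of the generalized projection $\widetilde{P}_h$ genuinely hold — which is precisely where the mesh conditions $(C1)$--$(C3)$ of Appendix~\ref{app:conditions} enter.
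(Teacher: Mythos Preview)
Your approach is genuinely different from the paper's and contains a gap at exactly the point you flag as delicate.

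The paper does \emph{not} mirror the proof of Lemma~\ref{lem:L2projVh}. Instead it argues by pure duality, invoking Lemma~\ref{lem:L2projVh} itself rather than reproving an analogue. Concretely: since $\widetilde P_h\Phi_h\in\widetilde{\mathbb W}_h\subset\widetilde H^{-s}(\Omega)$, one has
\[
\|\widetilde P_h\Phi_h\|_{H^{-s}(\Omega)}\le\|\widetilde P_h\Phi_h\|_{\widetilde H^{-s}(\Omega)}=\sup_{0\ne u\in H^s(\Omega)}\frac{\langle\widetilde P_h\Phi_h,u\rangle_\Omega}{\|u\|_{H^s(\Omega)}}.
\]
Now replace $u$ by $\widetilde Q_hu$ in the numerator (using the definition of $\widetilde Q_h$) and bound $\|\widetilde Q_hu\|_{H^s}$ below via Lemma~\ref{lem:L2projVh}; this produces the factor $(1+\tfrac{c_2}{s-1/2}h^{1/2-s})$ directly. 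Finally, the definition of $\widetilde P_h$ lets one replace $\widetilde P_h\Phi_h$ by $\Phi_h$ in the pairing, and the supremum over $u_h\in\widetilde{\mathbb V}_h$ is bounded by $\|\Phi_h\|_{H^{-s}(\Omega)}$. No boundary analysis for $\mathbb W_h$ is needed at all; the $(s-1/2)^{-1}$ constant is inherited from the trace theorem already used for piecewise linears in Lemma~\ref{lem:L2projVh}.

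Your route, by contrast, requires controlling a boundary contribution of a piecewise constant $\Phi_h$ and then recovering the $(s-1/2)^{-1}$ blow-up from that step. The problem is not merely that $\Phi_h$ has no classical trace: discrete inverse inequalities on the boundary layer are purely algebraic and do not detect the Sobolev threshold $s=\tfrac12$, so there is no obvious mechanism by which they would yield the constant $\tfrac{1}{s-1/2}$. In your power count you get $h^{s+1/2}$ from the strip estimate plus Scott--Zhang, so you would still need $\|\Phi_h\|_{\text{boundary}}\lesssim (s-\tfrac12)^{-1}h^{-2s}\|\Phi_h\|_{H^{-s}(\Omega)}$, and you give no argument for either the exponent or the constant. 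This is a genuine gap, and the paper's duality trick bypasses it entirely.
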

\begin{proof}
Let us use the norms' properties and write
\begin{align*}
 \Vert \widetilde{P}_h \Phi_h \Vert_{H^{-s}(\Omega)} \leq 
 \Vert \widetilde{P}_h \Phi_h \Vert_{\widetilde{H}^{-s}(\Omega)}
 =  \underset{0\neq u \in H^{s}(\Omega)}{\mathrm{sup}} 
  \dfrac{\dual{\widetilde{P}_h \Phi_h}{u}_{\Omega}}{\Vert u \Vert_{H^{s}
  (\Omega)}}.
\end{align*}
Then, using the definition of $\widetilde{Q}_h$ and the estimates above, 
we get
\begin{align*}
\Vert \widetilde{P}_h \Phi_h \Vert_{H^{-s}(\Omega)} &\leq (1 +\cu{\frac{c_2}
{s-{1/2}}}\,h^{1/2-s}) \underset{0\neq u \in H^{s}(\Omega)}{\mathrm{sup}} \dfrac{
\dual{\widetilde{P}_h \Phi_h}{\widetilde{Q}_h u}_{\Omega}}{\Vert \widetilde{Q}_h 
u \Vert_{H^{s}(\Omega)}}\\
&\leq (1 + \cu{\frac{c_2}{s-{1/2}}}\,h^{1/2-s}) \underset{0\neq u_h \in 
\widetilde{\mathbb{V}}_h}{\mathrm{sup}} \dfrac{\dual{\widetilde{P}_h \Phi_h}{u_h
}_{\Omega}}{\Vert u_h \Vert_{H^{s}(\Omega)}}.
\end{align*}
Now, by definition of $\widetilde{P}_h$, and since $\widetilde{\mathbb{V}}_h
\subset \mathbb{V}_h$, we have
\begin{align*}
\Vert \widetilde{P}_h \Phi_h \Vert_{H^{-s}(\Omega)} &\leq (1 +\cu{\frac{c_2}{s
-{1/2}}}\,h^{1/2-s}\,) \underset{0\neq u_h \in \widetilde{\mathbb{V}}_h}{
\mathrm{sup}} \dfrac{\dual{\Phi_h}{u_h}_{\Omega}}{\Vert u_h \Vert_{H^{s}(\Omega)
}}\\ &\leq \cu{c_3}(1 + \cu{\frac{c_2}{s-{1/2}}}\,h^{1/2-s}\,) \Vert\Phi_h
\Vert_{H^{-s}(\Omega)}.
\end{align*}
\rcu{\qed}
\end{proof}

\begin{lemma}
 Let $ s\in(1/2,1)$. Then, the following inf-sup condition holds
 \begin{equation}
\underset{\phi_h \in \widetilde{\mathbb{W}}_h}{\mathrm{sup}}\dfrac{\dual{v_h}{
\phi_h}_{\Omega}}{\Vert \phi_h \Vert_{H^{-s}(\Omega)}}\geq \cu{\frac{\beta_{\bd}
}{c_3}}\left(1 + \cu{\frac{c_2}{s-{1/2}}} \,h^{1/2-s} \right)^{-1} \Vert v_h 
\Vert_{\widetilde{H}^{s}(\Omega)}, \quad \forall v_h \in \widetilde{\mathbb{V}
}_{h},
 \end{equation} 
   with $\cu{c_3,c_2}>0$ and independent of $h$.
\end{lemma}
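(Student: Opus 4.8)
The plan is to combine the discrete inf-sup condition \eqref{eq:infsupdh} for the pair $(\widetilde{\mathbb{V}}_{h},\mathbb{W}_{h})$ with the stability of the generalized $L^2$-projection $\widetilde{P}_h : L^2(\Omega) \to \widetilde{\mathbb{W}}_h$ proved in the preceding Lemma. The point is that $\widetilde{P}_h$ lets us replace an admissible test function $\varphi_h \in \mathbb{W}_h$ by $\widetilde{P}_h\varphi_h \in \widetilde{\mathbb{W}}_h$ without changing the value of the duality pairing against functions in $\mathbb{V}_h \supset \widetilde{\mathbb{V}}_h$, at the cost of the factor $(1+\frac{c_2}{s-1/2}h^{1/2-s})$ in the norm.

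First I would fix $v_h \in \widetilde{\mathbb{V}}_{h}$, the asserted estimate being trivial for $v_h = 0$, so assume $v_h \neq 0$. By \eqref{eq:infsupdh} there is a nonzero $\varphi_h \in \mathbb{W}_h$ with
$$\langle v_h, \varphi_h \rangle_\Omega \;\geq\; \beta_{\bd}\,\|v_h\|_{\widetilde{H}^s(\Omega)}\,\|\varphi_h\|_{H^{-s}(\Omega)}.$$
Set $\phi_h := \widetilde{P}_h \varphi_h \in \widetilde{\mathbb{W}}_h$. Since $\widetilde{\mathbb{V}}_{h}\subset\mathbb{V}_{h}$, the defining relation of $\widetilde{P}_h$ applies with the test function $v_h$ and gives $\langle \phi_h, v_h\rangle_\Omega = \langle \varphi_h, v_h\rangle_\Omega$, hence
$$\langle v_h, \phi_h\rangle_\Omega \;=\; \langle v_h, \varphi_h\rangle_\Omega \;\geq\; \beta_{\bd}\,\|v_h\|_{\widetilde{H}^s(\Omega)}\,\|\varphi_h\|_{H^{-s}(\Omega)} \;>\; 0,$$
so in particular $\phi_h \neq 0$ and the quotient below is well defined. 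The stability bound of the previous Lemma yields
$$\|\phi_h\|_{H^{-s}(\Omega)} \;=\; \|\widetilde{P}_h \varphi_h\|_{H^{-s}(\Omega)} \;\leq\; c_3\Bigl(1 + \tfrac{c_2}{s-1/2}\,h^{1/2-s}\Bigr)\|\varphi_h\|_{H^{-s}(\Omega)}.$$
Dividing the two displays, the factor $\|\varphi_h\|_{H^{-s}(\Omega)}$ cancels, which gives $\langle v_h,\phi_h\rangle_\Omega/\|\phi_h\|_{H^{-s}(\Omega)} \geq \frac{\beta_{\bd}}{c_3}(1+\frac{c_2}{s-1/2}h^{1/2-s})^{-1}\|v_h\|_{\widetilde{H}^s(\Omega)}$, and taking the supremum over $\phi_h\in\widetilde{\mathbb{W}}_h$ finishes the proof.

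I do not expect a real obstacle: every step is an elementary manipulation, and all the analysis — the trace inequality and the associated $h^{1/2-s}$ loss — has already been absorbed into the stability estimate for $\widetilde{P}_h$. The only points needing (minor) care are verifying that the chosen $\varphi_h$ and the resulting $\phi_h=\widetilde{P}_h\varphi_h$ are nonzero so that the Rayleigh quotients make sense, and using the inclusion $\widetilde{\mathbb{V}}_{h}\subset\mathbb{V}_{h}$ precisely at the point where the defining property of $\widetilde{P}_h$ is invoked with test function $v_h$.
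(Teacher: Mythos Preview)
Your argument is correct and follows essentially the same route as the paper: use the inf-sup condition for the pair $(\widetilde{\mathbb{V}}_h,\mathbb{W}_h)$ to produce a good test function $\varphi_h\in\mathbb{W}_h$, then transfer it to $\widetilde{\mathbb{W}}_h$ via $\widetilde{P}_h$, exploiting that $\widetilde{P}_h$ preserves the pairing against $v_h\in\widetilde{\mathbb{V}}_h\subset\mathbb{V}_h$ and satisfies the stability bound of the previous Lemma. The only cosmetic difference is that the paper constructs $\varphi_h$ explicitly through an auxiliary discrete Riesz-type operator $\Pi_h^s:\widetilde{H}^s(\Omega)\to\mathbb{W}_h$, whereas you invoke finite-dimensionality of $\mathbb{W}_h$ to take a maximizer of the quotient directly; both lead to the same estimate with the same constants.
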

\begin{proof}
Let us introduce the operator $\Pi_h^s: \widetilde{H}^s(\Omega) \to 
\mathbb{W}_h\subset H^{-s}(\Omega)$ for $s\in(0,1]$, defined by the variational 
formulation
\begin{equation}
\dual{\Pi_h^s u}{ v_h}_{\Omega} = (u, v_h)_{\widetilde{H}^s(\Omega)}, 
\quad \forall v_h \in \widetilde{\mathbb{V}}_{h},
\end{equation}
where $(\cdot,\cdot)_{\widetilde{H}^s(\Omega)}$ denotes the $\widetilde{H}^s(
\Omega)$-inner product. This operator is analogous to \cite[Equation~1.75]{s2}
\cite[Equation~4.22]{hju}, and thus it verifies
 \begin{equation}
\Vert \Pi_h^s u \Vert_{{H}^{-s}(\Omega)} \leq \cu{\beta_{\bd}^{-1}} \Vert u 
\Vert_{\widetilde{H}^s(\Omega)}, \qquad \forall u\in \widetilde{H}^s(\Omega).
 \end{equation} 

Next, 
we have that for any $v_h \in \widetilde{\mathbb{V}}_h$ 
\begin{align*}
\Vert v_h \Vert_{\widetilde{H}^{s}(\Omega)} &= \dfrac{(v_h, v_h)_{\widetilde{H
}^{s}(\Omega)}}{\Vert v_h \Vert_{\widetilde{H}^{s}(\Omega)}} 
= \dfrac{\dual{v_h}{\Pi_hv_h}_{\Omega}}{\Vert v_h \Vert_{\widetilde{H}^{s}(
\Omega)}} \leq \beta_{\bd}^{-1}\dfrac{\dual{v_h}{\Pi_hv_h}_{\Omega}}{\Vert 
\Pi_h v_h\Vert_{H^{-s}(\Omega)}} = \beta_{\bd}^{-1} \dfrac{\dual{v_h}{\widetilde{
P}_h\Pi_hv_h}_{\Omega}}{\Vert \Pi_hv_h\Vert_{{H}^{-s}(\Omega)}},
\end{align*}
where in the last step we used that $\Pi_hv_h \in \mathbb{W}_h$ and the 
definition of $\widetilde{P}_h$.

Now, let us use our previous estimates to derive
\begin{align*}
\Vert v_h \Vert_{\widetilde{H}^{s}(\Omega)} \leq \frac{c_3}{\beta_{\bd}}(1 + 
\frac{c_2}{s-1/2}\, h^{1/2-s})\dfrac{\dual{v_h}{\widetilde{P}_h\Pi_hv_h}_{\Omega
}}{\Vert \widetilde{P}_h\Pi_hv_h\Vert_{{H}^{-s}(\Omega)}}.
\end{align*}
Set $\varphi_h := \widetilde{P}_h\Pi_hv_h$ and note that $\varphi_h\in \widetilde{
\mathbb{W}}_h$. Therefore, this gives
\begin{align*}
\Vert v_h \Vert_{\widetilde{H}^{s}(\Omega)} 
&\leq \frac{c_3}{\beta_{\bd}}(1 + \frac{c_2}{s-1/2}\, h^{1/2-s}\,)\dfrac{\dual{
v_h}{\varphi_h}_{\Omega}}{\Vert \varphi_h\Vert_{{H}^{-s}(\Omega)}}
\leq \frac{c_3}{\beta_{\bd}}(1 + \frac{c_2}{s-1/2}\, h^{1/2-s}\,)\underset{\phi_h 
\in \widetilde{\mathbb{W}}_h}{\mathrm{sup}}\dfrac{\dual{v_h}{\phi_h}_{\Omega}}
{\Vert \phi_h\Vert_{{H}^{-s}(\Omega)}}. 
\end{align*}
Finally, move the factors to the other side and one gets the desired result.
\rcu{\qed}
\end{proof}
{\it Proof of Proposition~\ref{prop:blowup}} 
 First notice that in this context the inf-sup 
constant of $\bd$ is $$\cu{\tilde{\beta}_\bd := \frac{\beta_{\bd}}{c_3}\left(1 + 
\frac{c_2}{s-1/2}\, h^{1/2-s} \right)^{-1}}.$$ 
Then, we plug this 
in \eqref{mainresult} and get
 \begin{equation}
 \kappa\left( \mathbf{D}^{-1} \tilde{\mathbf{C}}_s \mathbf{D}^{-T} \mathbf{A}
 \right) \leq \dfrac{C_{\gamma} C_A \Vert \bd \Vert^2 c_3^2\left( 1 + \dfrac{c_2
 }{s-1/2}\, h^{1/2-s} \right)^2}{\beta_A \beta_{\gamma} \beta_{\bd}^2} \sim 
 \mathcal{O}(h^{1-2s}).
\end{equation}
\rcu{\qed}

\end{document}